\author{Gustave Billon}
\title{Branched Projective Structures and Bundles of Projective Frames on Surfaces}
\DeclareMathOperator{\Aff}{Aff}
\DeclareMathOperator{\paff}{\mathcal Aff}
\DeclareMathOperator{\PGL}{PGL}
\DeclareMathOperator{\id}{id}
\DeclareMathOperator{\ad}{ad}
\DeclareMathOperator{\At}{At}
\DeclareMathOperator{\CP}{\mathbb CP}
\DeclareMathOperator{\A}{A}
\DeclareMathOperator{\Hom}{Hom}
\DeclareMathOperator{\red}{red}
\DeclareMathOperator{\SO}{SO}
\DeclareMathOperator{\Aut}{Aut}
\DeclareMathOperator{\C}{\mathbb C}
\DeclareMathOperator{\R}{\mathbb R}
\DeclareMathOperator{\N}{\mathbb N}
\DeclareMathOperator{\Isom}{Isom}
\DeclareMathOperator{\PSL}{PSL}
\newtheorem{statement}{}[subsection]
\theoremstyle{definition}
\newtheorem{definition}[statement]{Definition}
\newtheorem*{definition*}{Definition}
\theoremstyle{theorem}
\newtheorem{theorem}[statement]{Theorem}
\newtheorem{proposition}[statement]{Proposition}
\newtheorem{corollary}[statement]{Corollary}
\newtheorem{lemma}[statement]{Lemma}
\theoremstyle{remark}
\newtheorem{remark}[statement]{Remark}
\begin{document}

\maketitle

\begin{abstract}
We show that the description of the holomorphic $\CP^1$-bundle associated to a holomorphic projective structure on a Riemann surface in terms of the principal bundle of projective $2$-frames extends very well to the setting of branched projective structures.
This generalization reveals a space of parameters, each of which is associated to a branching class.
The space of branched projective structures with a given branching class appears as a space of connections on a given $\CP^1$-bundle, and is consequently an affine space.
Finally, we study the map which to a branching class associates the corresponding $\CP^1$-bundle with section.
\end{abstract}

\section*{Introduction}

\subsection*{Projective Structures}

Holomorphic projective structures are $\left(\PSL(2, \C), \CP^1\right)$-structures, i.e. Riemann surfaces locally modeled on the Riemann sphere $\CP^1$.
More precisely, a holomorphic projective structure on a Riemann surface $X$ is given by an atlas $(U_i, f_i)_{i \in I}$, where $(U_i)_{i \in I}$ is an open cover of $X$ and $f_i : U_i \to \CP^1$ are a local biholomorphisms, such that on the intersections $U_i \cap U_j$, there is a Möbius transformation $g_{ij} \in \PSL(2, \C)$ such that $f_i = g_{ij} \circ f_j$.

These structures play a central role in the theory of uniformization of Riemann surfaces, which is the reason why they were introduced at the end of the nineteenth century (see \cite{saint-gervais}).
Recall that the uniformization theorem states that any simply connected Riemann surface is biholomorphic to $\CP^1$, $\C$ or $\mathbb H^2$, and can thus be seen as an open subset of the Riemann sphere.
In particular, if $X$ is a Riemann surface and $\pi : \widetilde X \to X$ its universal covering, then $\widetilde X \subset \CP^1$ and the group of deck transformations of $\pi$ is contained in $\PSL(2, \C)$, so that the local inverses of the covering map $\pi$ provide local identifications of $X$ with $\CP^1$, defining the so-called \emph{uniformizing projective structure} of $X$.

Projective structures also provide examples of \emph{opers} (see \cite{beilinson-drinfeld}) , namely $\PSL(2, \C)$-opers.
Recent papers extend properties of holomorphic projective structures to more general classes of opers (see in particular \cite{sanders}).
See also \cite{frenkel} for the role opers in the Langlands program.

This paper revisits one of the key properties of holomorphic projective structures.
Namely, if $X$ is a compact Riemann surface, the space of projective structures on $X$ is an affine space, directed by the vector space $H^0\left(X, K_X^{\otimes 2}\right)$ of global holomorphic quadratic differentials on $X$ (here $K_X$ denotes the cotangent bundle of $X$).

There is a very rich literature about this affine structure.
The classical approach is centered on an order $3$ differential operator, named the \emph{schwarzian derivative}~: the difference between two holomorphic projective structures $p_1$ and $p_2$ is obtained as the schwarzian derivative of the charts of $p_1$, seen as holomorphic functions of the coordinates given by the charts of $p_2$.
All the properties of the schwarzian derivative are encapsulated in the fact that this process defines a quadratic differential on $X$.
See for instance \cite{gunning}, \cite{dumas}, \cite{loray-marin} for a precise exposition of this approach.

\subsection*{The Projective Osculating Line}

A more geometric viewpoint on the affine structure of the space of projective structures on $X$ is developed in \cite{deligne_2}, \cite{kobayashi} and \cite{anderson}.
A holomorphic projective structure on $X$ is seen as a triple $(P, s, \nabla)$ where $P$ is a holomorphic $\CP^1$-bundle on $X$, $s$ is a holomorphic section of $P$ and $\nabla$ is a flat connection (equivalently a Riccati foliation) on $P$ such that $s$ and $\nabla$ are transverse over each point of $X$.
The key fact here is that the couple $(P, s)$ is the same for all the holomorphic projective structures on $X$.
As a consequence, the space of projective structures on $X$ appears as a space of connections on a fixed $\CP^1$-bundle, and inherits the classical affine structure of connection spaces.

We denote by $\left(P_X, s_X\right)$ the $\CP^1$-bundle with section associated to the holomorphic projective structures on $X$.
It is called the \emph{projective osculating line} of $X$ and should be understood as a projective equivalent to the tangent bundle $T_X$ of $X$.
The section $s_X$ plays the role of the zero section of $T_X$ : for any $x \in X$, $s_X(x)$ is the contact point between $X$ and the fiber $P_{X, x}$.
However, while $T_{X, x}$ has a tangency point of order $1$ with $X$, $P_{X, x}$ and $X$ are tangent up to order $2$ at $s(x)$.

In \cite{kobayashi} and \cite{anderson}, the projective osculating line of $X$ is introduced by way of its principal bundle of trivializations, namely the bundle of \emph{projective $2$-frames} on $X$.
This is also the approach of this paper.

\subsection*{Branched Projective Structures}

The aim of this article is to generalize this point of view on projective structures, involving the projective osculating line, to \emph{branched projective structures}.
A branched projective structure is a projective structure with a particular kind of singularities, namely cone points with angle a multiple of $2\pi$.
Formally, a branched projective structure on the Riemann surface $X$ is given by an atlas $(U_i, f_i)$, still with Möbius transformations as changes of charts, but where the charts $f_i$ are now only nonconstant holomorphic maps.
Here the differentials $df_i$ may vanish, and the vanishing locus of the $df_i$ define a divisor $D$ on $X$, named the \emph{branching divisor} of the branched projective structure.

Branched projective structures arise for instance as pullbacks of projective structures by ramified coverings.
It should be noted that this notion is a special case (where poles are of order at most $2$ and local monodromy is trivial) of the more general notion of \emph{meromorphic projective structure}, that is studied in particular in \cite{allegretti-bridgeland}, \cite{gupta-mj}, \cite{gupta-mj_2} and \cite{serandour}.
However, usual techniques for the study of meromorphic projective structures do not apply to branched projective structures.

Branched projective structures are also examples of \emph{branched opers}, namely branched $\PSL(2, \C)$-opers.
See \cite{frenkel-gaitsgory} and \cite{biswas-dumitrescu-heller} for more on branched opers.
It should also be noted that branched projective structures have been extensively studied with regards to their holonomy representations, that are elements of $\Hom\left(X, \PSL(2, \C)\right)$, see \cite{gallo-kapovich-marden}, \cite{calsamiglia-deroin-francaviglia}, \cite{le_fils}.
While the holonomy representations of unbranched projective structures are all nonelementary, any element in $\Hom\left(X, \PSL(2, \C)\right)$ is the holonomy of a branched projective structure.
This illustrates the flexibility allowed by branching singularities.

One difficulty in the study of branched projective structures is that, unlike what happens for (meromorphic) projective structures, the space of branched projective structures over a fixed Riemann surface $X$ and divisor $D$ is not an affine space in general.
It is only an analytic space, as shown in \cite{mandelbaum_1} where Mandelbaum introduced branched projective structures.

One way to understand this lack of structure is to consider the description of branched projective structures as flat $\CP^1$-bundles  with section.
Similarly to the unbranched case, a branched projective structure on $X$ is a triple $(P, s, \nabla)$ where $P$ is a holomorphic $\CP^1$-bundle, $s$ a holomorphic section of $P$ and $\nabla$ a holomorphic connection on $P$ such that $s$ is not flat for $\nabla$ (equivalently, $s$ is not a leaf of the Riccati foliation associated to $\nabla$).
Note that, unlike in the case of projective structures, here the condition on the triple $(P, s, \nabla)$ is a generic one, in this sense the notion of branched projective structures is a generic generalization of the concept of unbranched projective structures (like branched coverings are generic generalizations of coverings).

This description of branched projective structures reveals a major difference with the unbranched case.
Namely, the couple $(P, s)$ depends on the branched projective structure considered, even when the underlying Riemann surface $X$ and branching divisor $D$ are fixed.
This explains why the space of branched projective structures over a Riemann surface with divisor is not an affine space, being not a space of connections.
Note that in \cite{biswas-dumitrescu-gupta} the space of branched projective structures over a curve with divisor is described as a space of \emph{logarithmic} connections over a fixed rank $2$ vector bundle, while in \cite{biswas-dumitrescu}, generic triples $(P, s, \nabla)$ are described as \emph{branched $\SO(3, \C)$-opers}.

In this paper we show that the construction of the projective osculating line in the unbranched case naturally generalizes to the branched case and exhibits, for a fixed $X$ and $D$, a space of parameters that we call \emph{branching classes}.
To each branching class is associated a \emph{branched projective osculating line}, that is a $\CP^1$-bundle with section, generalizing the twisted tangent line bundle $T_X(D)$ with the zero section.
To a branched projective structure is associated a branching class, and the $\CP^1$-bundle with section associated to a branched projective structure is the projective osculating line corresponding to its branching class.
One consequence is that the space of branched projective structures corresponding to a given branched osculating line, being identified to a space of connections, is an affine space.
The underlying vector space is $H^0\left(X, K_X^{\otimes 2}(-D)\right)$.

Note that in \cite{billon}, branching classes is the key element to construct an analytic structure on the space of all branched projective structures with fixed genus and branching degree, and investigate their singularities.
See also \cite{billon_2}.

It should be noted that a branched projective osculating line is slightly more that a $\CP^1$-bundle with section. In particular several branched projective osculating lines may have the same underlying $\CP^1$-bundle with section.
We conclude this paper by studying the map from the space of branching classes (or branched projective osculating lines), that we show to be an affine space, to the moduli space of $\CP^1$-bundles with section, in the spirit of \cite{maruyama}.

\subsection*{Structure of the Paper}

In section \ref{section_cartan_geometries}, we present the theory of $(G, X)$-structures and Cartan geometries with a point of view that will be useful to consider projective structures from the viewpoint of the projective osculating line.
We emphasize the equivalence between the datum of a bundle (with section) over a manifold and the datum of the principal bundle of its trivializations (preserving the section).

In section \ref{section_projective_structures_projective_osculating_line}, we introduce the theory of holomorphic projective structures on a fixed Riemann surface $X$, in the spirit of \cite{kobayashi-nomizu}, \cite{kobayashi} and \cite{anderson}, through the study of the osculating projective line $P_X$, its adjoint bundle and its Atiyah bundle.
With our viewpoint, the object that arises naturally is in fact the bundle of  projective $2$-frames of $X$, that is the bundle of trivializations of $P_X$.
Thus we mostly work with projective $2$-frames.
Projective structures arise as some class of Cartan connections on the bundle of projective $2$-frames, called \emph{projective connections}.
We show that these connections induce a natural structure of $\SO(3, \C)$-oper on the adjoint bundle of the bundle of projective $2$-frames.
The content of this section, as well as the previous one, is classical.

In section \ref{section_bps_bol}, we confront the viewpoint developed in section \ref{section_projective_structures_projective_osculating_line} on the projective osculating line to the branched case.
This reveals, at each branching point, a parameter space, the space of branching classes, each branching class corresponding to a branched analog of the bundle of projective $2$-frames, thus leading to a branched analog of the osculating projective line.
We then generalize the constructions carried out for projective $2$-frames to branched projective $2$-frames.
A notion of \emph{branched projective connections} on these bundles arises, that we show to be equivalent to the notion of branched projective structures.

In section \ref{section_spaces_bpf}, we are interested in the space of parameters defined in section \ref{section_bps_bol}.
We show that this space of parameters is an affine space, and we study the map from this affine space to the moduli space of $\CP^1$-bundles with section, that maps each branching class to the $\CP^1$-bundle with section given by the associated branched osculating line.

\subsection*{Acknowledgements}

I am thankful to my advisor Sorin Dumitrescu for his help and support.
I would like to thank David Dumas for sharing the work of Charles Gregory Anderson \cite{anderson} with me.

\section{Cartan Geometries}\label{section_cartan_geometries}

In this section we introduce the language of Cartan geometries, that we will use to describe the $\CP^1$-bundle associated to a projective structure.

\subsection{Different Notions of Manifolds Modeled on a Space}\label{subsection_diff_notions}

Let $Q$ be a manifold, endowed with the faithful left-action of a Lie group $G$ by diffeomorphisms.
For instance, one can have in mind the space $\R^n$, along with the action of its group of isometries $\Isom(\R^n)$.
In this article we are interested in the projective line $\CP^1$ with the action of the projective linear group $\PGL(2, \C)$ by linear transformations.
The action of $G$ makes it possible to pay attention to properties of some objects of $Q$, that we call \emph{geometric properties}.
A property $p$ is said to be geometric if, for any object $\omega$ attached to $Q$ that has the property $p$, the image of $\omega$ by any element of $G$ also has the property $p$.
For instance, in $\R^n$ with the action of $\Isom(\R^n)$, the property "being aligned" is a geometric property of sets of points.
The property "being of norm $1$" is a geometric property of tangent vectors.
In $\CP^1$ with the action of $\PGL(2, \C)$, four points can have the property "being of cross-ratio $1$", which is a geometric property.

Let us first introduce an abstract way of considering the geometry of $Q$ with the action of $G$ :

\begin{definition}
A $(G, Q)$-\emph{space} is a space $Q'$ endowed with a family $(\alpha_i)_{i \in I}$ of \emph{trivializations}, that is to say of diffeomorphisms $\alpha_i : Q' \xrightarrow{\sim} Q$, such that for any $i, j \in I$ there exists $g \in G$ such that $\alpha_i = g \circ \alpha_j$.
If $Q_1$ and $Q_2$ are $(G, Q)$-spaces, a diffeomorphism $\phi : Q_1 \xrightarrow{\sim} Q_2$ is said to be an \emph{isomorphism} of $(G, Q)$-spaces if there is a trivialization $\alpha_1 : Q_1 \xrightarrow{\sim} Q$ of $Q_1$ and a trivialization $\alpha_2 : Q_2 \xrightarrow{\sim} Q$ of $Q_2$ such that $\alpha_1 = \alpha_2 \circ \phi$
\end{definition}

\begin{remark}
\begin{itemize}
\item[(i)] A $(\Isom(\R^n), \R^n)$-space is a Euclidean space of dimension $n$.
\item[(ii)] If $Q'$ is a $(G, Q)$-space, there is no canonical action of $G$ on $Q'$.
\item[(iii)] Since the action of $G$ on $Q$ is faithful, the set of trivializations of a $(G, Q)$-space $Q'$ is a $G$-\emph{torsor}, i.e. a space on which $G$ acts freely and transitively.
In particular it inherits the differential structure of $G$.
\end{itemize}
\end{remark}

If the geometric properties that can be studied on $Q$ are of interest, one might ask whether the same notions can also be studied on objects attached to other differential manifolds, that are not necessarily isomorphic to $Q$.
Let $M$ be a differential manifold.
If one is interested in local objects, it is enough to only have, for each point $m \in M$, an identification of an open neighborhood of $m$ with an open subset of $Q$. 
Of course these several local identifications have to be consistent with the geometry of $Q$~: if two of them are defined on the same open subset of $M$, one has to be obtained from the other by composing with the action of an element of $G$, so that the induced local geometry on $M$ is the same.
This is the notion of $(G, Q)$-structure~:

\begin{definition}
Let $M$ be a differential manifold.
A $(G, Q)$-atlas on $M$ is the datum of~:
\begin{itemize}
\item An open cover $(U_i)_{i \in I}$ of $M$
\item For each $i \in I$, a map $f_i : U_i \to Q$ that is a diffeomorphism from $U_i$ to its image, such that for any $i, j \in I$ there exists $g \in G$ with $f_i(x) = g \cdot f_j(x)$ for all $x \in U_i \cap U_j$
\end{itemize}
Two $(G, Q)$-atlases are equivalent if their union is still a $(G, Q)$-atlas.
A $(G, Q)$-structure is an equivalence class of $(G, Q)$-atlases.
\end{definition}

For example, a $(\Isom(\R^n), \R^n)$-structure is a flat Riemannian manifold.

Another way of understanding the fact that $M$ locally has the geometry of $Q$ is the following.
Consider a small observer moving in $Q$.
In the case where $M$ is a sphere, $Q = \R^2$ and $G = \Isom(\R^2)$, one can think of somebody walking on a very big sphere, such as the earth.
Wherever he is, the observer sees his neighborhood as if he was living in $Q$~: to each point $m$ of $M$ is attached a copy $\mathcal Q_m$ of $Q$, and the observer locates himself as if he was moving in $\mathcal Q_m$. 
In the case of the plane $\R^2$ with the action of $\Isom(\R^2)$, the space $\mathcal Q_m$ would be the tangent space of $M$ at $m$. 
In particular, there is a point $s(m) \in \mathcal Q_m$, identified with $m \in M$ that is the point where the observer considers he is standing when he is in $m$. 
When the observer moves from a point $m \in M$ to $m' \in M$, he feels like he was moving inside $Q$, while he moves from $\mathcal Q_m$ to $\mathcal Q_{m'}$~: he identifies these two spaces.
Moreover, when he moves inside $M$, the observer sees that his position inside his reference space changes, meaning that $s_{m'}$ is \emph{not} identified to $s_m$ when $m$ and $m'$ are close. 
In other words, the family of spaces $(\mathcal Q_m)_{m \in M}$ comes with a \emph{connection}, i.e. an identification between $\mathcal Q_{\gamma(0)}$ and $\mathcal Q_{\gamma(1)}$ for any smooth path $\gamma : [0, 1] \to M$. 
This connection is \emph{transverse} to the family of points $(s_m)_{m \in M}$. 
Such a local modelling of $M$ on the geometry of $Q$ is formalized by the notion of \emph{Cartan geometry}.

\begin{definition}\label{def_cartan_geom}
A \emph{Cartan geometry} on $M$ modeled on $Q$ with the action of $G$ is given by
\begin{itemize}
\item A differential bundle of $(G, Q)$-spaces $\pi : \mathcal Q \to M$ over $M$
\item A connection $\nabla$ on $\mathcal Q$, i.e. a distribution of $\dim(M)$-dimensional spaces in $\mathcal Q$, transverse to the fibers of $\pi$, and such that the associated parallel transport identifies the fibers by isomorphisms of $(G, Q)$-spaces
\item A section $s$ of $\mathcal Q$ that is transverse to the connection $\nabla$.
\end{itemize}
\end{definition}

For instance, a Cartan geometry modeled on $\R^n$ with the action of $\Isom(\R^n)$ is a (not necessarily flat) Riemannian manifold.

\begin{remark}
\begin{itemize}
\item[(i)]We use in definition \ref{def_cartan_geom} the notion of \emph{differential bundle of $(G, Q)$-spaces}.
Let us give a formal definition, although it is not surprising.
A differential bundle of $(G, Q)$-spaces on $M$ is a differential manifold $\mathcal Q$ with a differential map $\pi : \mathcal Q \to M$ as well as an open covering $(U_i)_{i \in I}$ of $M$ and for each $i \in I$ a \emph{local trivialization}, i.e. a diffeomorphism $(\pi, F_i) : \pi^{-1}(U_i) \xrightarrow{\sim} U_i \times Q$, such that the restrictions $f_{i, m} = F_i|_{\pi^{-1}(m)} : \pi^{-1}(m) \to Q$ ($m \in M$) are isomorphisms of $(G, Q)$-spaces, and for any $i, j \in I, m \in M$, there exists $g_{ij} \in G$ such that $f_{i, m} = g_{ij} \circ f_{j, m}$.
\item[(ii)] A bit trickier is the notion of \emph{section} of a bundle of $(G, Q)$-spaces.
A section of the $(G, Q)$-bundle $\mathcal Q$ is a smooth map $s : M \to \mathcal Q$ such that, firstly, $\pi \circ s = \id_M$ and, secondly, there is an open cover $(U_i)_{i \in I}$ of $M$ and trivializations $(\pi, F_i) : \pi^{-1}(U_i) \xrightarrow{\sim} U_i \times Q$ such that for each $i \in I$, $F_i \circ s: U_i \to Q$ is constant.
The second condition is a consequence of the first one only in the case where the action of $G$ on $Q$ is transitive.
\item[(iii)] The distribution defined by the connection $\nabla$ has rank $\dim(M)$. 
The section $s$, seen as a submanifold of $\mathcal Q$, also has dimension $\dim(M)$. 
Thus the transversality of $s$ and $\nabla$ gives $2 \dim(M) = \dim(\mathcal Q) = \dim(M) + \dim(Q)$, so $\dim(M) = \dim(Q)$.
\end{itemize}
\end{remark}

Suppose from now on that the action of $G$ on $Q$ is \emph{analytic}, meaning that for any nonempty open set $U \subset Q$, an element $g \in G$ is uniquely determined by the restriction of its action to $U$.
Recall that a connection is said to be \emph{flat} if the associated distribution is integrable.

Suppose given a $(G, Q)$-structure on $M$, with atlas $(U_i, f_i)_{i \in I}$.
For each $i \in I$, consider a $(G, Q)$-space $Q_i$, as well as an open subset $V_i \subset Q_i$ and an identification $\iota_i : V_i \xrightarrow{\sim} U_i$ (the diffeomorphism $f_i$ ensures the existence of $V_i$ and $\iota_i$).
Since the action of $G$ is analytic, for each $i \in I$ there is a unique trivialization $\gamma_i : Q_i \xrightarrow{\sim} Q$ extending the map $f_i \circ \iota_i$.
Thus for each nonempty intersection $U_i \cap U_j$, there is a uniquely determined identification $\gamma_i^{-1} \circ \gamma_j : Q_j \to Q_i$.
These identifications make the family $(Q_i)_{i \in I}$ into a \emph{local system} of $(G, Q)$-spaces on $Q$.
Such a local system can be seen as a flat bundle of $(G, Q)$-spaces $\mathcal Q$ over $M$, by considering for each $i \in I$ the product $U_i \times Q_i$ and glueing these products over the intersections $U_i \cap U_j$ by the isomorphisms $(m, x_j) \in (U_i \cap U_j) \times Q_j \mapsto \left(m, \gamma_i^{-1} \circ \gamma_j(x_j)\right) \in (U_i \cap U_j) \times Q_i$.
The connection on $\mathcal Q$ is given locally over $U_i$ by the horizontal distribution on the trivial bundle $U_i \times Q_i$.
Moreover, the identifications $\iota_i : V_i \xrightarrow{\sim} U_i$ are encoded in a section of $\mathcal Q$ given on each $U_i$ by the diagonal section $m \in U_i \mapsto \iota_i^{-1}(m) \in U_i \times Q_i$, which is transverse to the flat connection.
In other words, a $(G, Q)$-structure on $M$ induces a flat Cartan geometry on $M$, i.e. a Cartan geometry whose associated connection is flat.

Reciprocally, a flat Cartan geometry $(\mathcal Q, \nabla, s)$ on $M$ induces a $(G, Q)$-structure on $M$~: the flat bundle $(\mathcal Q, \nabla)$ defines a local system $(U_i, Q_i)_{i \in I}$ of $(G, Q)$-spaces on $Q$, and the section $s$ gives identifications of each open subset $U_i \subset M$ with an open subset of $Q_i$.

\begin{proposition}
The datum of a $(G, Q)$-structure on $M$ is equivalent to the datum of a flat Cartan geometry modeled of $Q$ with the action of $G$.
\end{proposition}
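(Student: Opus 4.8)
The plan is to show that the two constructions sketched just above---one assigning to a $(G, Q)$-structure a flat Cartan geometry, the other recovering a $(G, Q)$-structure from a flat Cartan geometry---are mutually inverse, once passed to the natural equivalence relations on each side: equivalence of atlases for $(G, Q)$-structures, and isomorphism of triples $(\mathcal Q, \nabla, s)$ for flat Cartan geometries. Thus there are three things to check: that each construction is well defined up to the relevant equivalence, and that the two round trips return to the initial datum up to equivalence.

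First I would verify well-definedness. For the construction of $\mathcal Q$, the only choices involved are the abstract $(G, Q)$-spaces $Q_i$ and the identifications $\iota_i$; a different choice produces an isomorphic flat bundle, because by analyticity the trivializations $\gamma_i$ extending $f_i \circ \iota_i$ are unique, so the transition data $\gamma_i^{-1} \circ \gamma_j$, and hence the glued bundle together with its horizontal connection and diagonal section, are canonical up to isomorphism. Replacing the atlas by an equivalent one merely adds compatible charts, refining the open cover, and therefore yields the same flat Cartan geometry up to the standard refinement isomorphism. In the other direction, the local system and the section read off from $(\mathcal Q, \nabla, s)$ manifestly transform by the trivializations of any isomorphism, so isomorphic flat Cartan geometries give equivalent atlases.

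The essential computation is the first round trip. Starting from an atlas $(U_i, f_i)$, the section of the associated bundle is, over $U_i$, the diagonal $m \mapsto \iota_i^{-1}(m)$; composing with the trivialization $\gamma_i$ gives $\gamma_i \circ \iota_i^{-1} = f_i$, by the very definition of $\gamma_i$ as the extension of $f_i \circ \iota_i$. Hence the recovered charts are exactly the original $f_i$ over the same opens, and the atlas is returned unchanged. For the second round trip I would start from $(\mathcal Q, \nabla, s)$, observe that the charts produced are the expressions of $s$ in flat local trivializations, and note that the rebuilt flat bundle is glued by the same transition data, so it is isomorphic to $(\mathcal Q, \nabla)$ as a flat bundle, the isomorphism matching the two sections by construction.

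The point requiring the most care is the interplay between transversality and the chart condition: I must check that a section transverse to the flat connection is, in a flat trivialization $U_i \times Q_i$, the graph of a local diffeomorphism $\sigma_i : U_i \to Q_i$, so that $\gamma_i \circ \sigma_i$ is a legitimate chart. This is where the dimension equality $\dim M = \dim Q$ and the analyticity hypothesis both enter: transversality of the graph to the horizontal leaves forces $\sigma_i$ to be a submersion, hence a local diffeomorphism, while analyticity guarantees that the resulting transition maps are restrictions of elements of $G$, so that the charts genuinely assemble into a $(G, Q)$-atlas.
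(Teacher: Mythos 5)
Your proposal is correct and follows essentially the same route as the paper, whose ``proof'' is precisely the two constructions discussed before the proposition (gluing the local system $(Q_i, \gamma_i^{-1}\circ\gamma_j)$ with its horizontal connection and diagonal section, and reading the charts off a flat trivialization). You merely make explicit the verifications the paper leaves implicit --- well-definedness up to equivalence, the two round trips via $\gamma_i\circ\iota_i^{-1}=f_i$, and the fact that transversality of $s$ to $\nabla$ together with $\dim M=\dim Q$ forces the local expression of $s$ to be a local diffeomorphism --- all of which are accurate.
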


\subsection{Adjoint and Atiyah Bundles}\label{subsection_adjoint_atiyah}

From now on, suppose that the action of $G$ on $Q$ is transitive, so that all points of $Q$ are geometrically equivalent.
Fix a point $x_0 \in Q$, so that $Q$ is now a \emph{pointed} space.
Let $H \subset G$ be the subgroup of $G$ fixing $x_0$, so that $Q = G/H$, and $\left(TQ\right)_{x_0} = \mathfrak g / \mathfrak h$, where $\mathfrak g$ and $\mathfrak h$ are the respective Lie algebras of $G$ and $H$.
One can slightly modify the definition of a $(G, Q)$-space to get the definition of a $(G, Q, x_0)$-space~:

\begin{definition}
A $(G, Q, x_0)$-\emph{space} is the datum of a $(G, Q)$-space $Q'$ with a marked point $x_0' \in Q'$.
The \emph{trivializations} of the $(G, Q, x_0)$-\emph{space} $(Q', x_0')$ are the trivializations of $Q'$ sending $x_0'$ to $x_0 \in Q$.
\end{definition}

There is an obvious notion of \emph{isomorphism of $(G, Q, x_0)$-spaces}.
There is also a notion of \emph{bundle of $(G, Q, x_0)$-spaces}, that is equivalent to the notion of bundle of $(G, Q)$-spaces with section.
In particular, the structure group of a bundle of $(G, Q, x_0)$-spaces is $H$.
There is also a notion of connection on a bundle of $(G, Q, x_0)$-spaces, for which the parallel transport has to preserve the section.

Take a $(G, Q)$-space (respectively a $(G, Q, x_0)$-space) $Q_1$.
The \emph{adjoint space} of $Q_1$, denoted by $\ad(Q_1)$, is the vector space of infinitesimal automorphisms of $Q_1$, i.e. the space of global vector fields of $Q_1$ whose flow at any small time is an automorphism of $Q_1$.
The vector space $\ad(Q_1)$ is stable for the Lie bracket of vector fields, and is thus a Lie algebra.
A trivialization of $Q$ identifies the Lie algebra $\ad(Q_1)$ with $\mathfrak g$ (respectively identifies the Lie algebra $\ad(Q_1)$ with $\mathfrak h$, and the vector space $T_{x_0}Q_1$ with $\mathfrak g / \mathfrak h$).
If $x_1 \in Q_1$, and $\widetilde Q_1$ is the $(G, Q, x_0)$-space given by $Q_1$ and the marked point $x_1$, the infinitesimal automorphisms of $\widetilde{Q_1}$ are in particular infinitesimal automorphisms of $Q_1$, thus $\ad\left(\widetilde{Q_1}\right) \subset \ad(Q_1)$.
The tangent space of $Q_1$ at $x_1$ is given by $\left(T_{Q_1}\right)_{x_1} = \ad\left(Q_1\right)/\ad\left(\widetilde{Q_1}\right)$.

Now take a bundle of $(G, Q)$-spaces (respectively of $(G, Q, x_0)$-spaces) $\pi : \mathcal Q \to M$.
The \emph{adjoint bundle} of $\mathcal Q$, denoted by $\ad(\mathcal Q)$ is the vector bundle on $M$ whose fiber over $m \in M$ is $\ad(\mathcal Q_m)$.
It is a bundle of Lie algebras.

The \emph{Atiyah bundle} of $\mathcal Q$, denoted by $\At(\mathcal Q)$, is the vector bundle over $M$ whose fiber over $m \in M$ is the vector subspace of $H^0\left(\pi^{-1}(m), T_{\mathcal Q}|_{\pi^{-1}(m)}\right)$ generated by global sections $\mathcal V$ of $T_{\mathcal Q}|_{\pi^{-1}(m)}$ for which there is a vector $V \in \left(T_M\right)_m$ and a local trivialization of $\mathcal Q$ such that $\mathcal V$ is the horizontal lift of $V$.
See \cite{atiyah}.

There is a natural notion of isomorphism of bundles of $(G, Q)$-spaces (respectively $(G, Q, x_0)$-spaces).
Given another bundle of $(G, Q)$-spaces (respectively $(G, Q, x_0)$-spaces) $\pi' : \mathcal Q' \to M'$, where $M'$ is another differential manifold, a map $\phi : \mathcal Q \to \mathcal Q'$ is an isomorphism if $\phi$ is a diffeomorphism, there exists an underlying diffeomorphism $\psi : M \xrightarrow{\sim} M'$ such that  $\pi' \circ \phi = \psi \circ \pi$, and $\phi$ induces isomorphisms of $(G, Q)$-spaces (respectively $(G, Q, x_0)$-spaces) between fibers of $\pi$ and fibers of $\pi'$.
A \emph{gauge transformation} of $\mathcal Q$ is an automorphism of $\mathcal Q$ for which the underlying automorphism of $M$ is the identity.
The adjoint bundle of $\mathcal Q$ can be seen as the bundle of infinitesimal gauge transformations of $\mathcal Q$.
The Atiyah bundle can be seen as the bundle of infinitesimal automorphisms of $\mathcal Q$.
In particular, $\ad(\mathcal Q) \subset \At(\mathcal Q)$.

The exact sequence at any $x \in \mathcal Q$
\begin{equation}\label{eq_tangent_of_bundle_exact}
0 \to \ker (d\pi)_x \to \left(T_{\mathcal Q}\right)_x \xrightarrow{d\pi_x} \left(T_M\right)_{\pi(x)} \to 0
\end{equation}
defines an exact sequence
\begin{equation}\label{eq_ad_at_exact}
0 \to \ad(\mathcal Q) \to \At(\mathcal Q) \to T_M \to 0
\end{equation}
That is called the \emph{Atiyah exact sequence} associated to $\mathcal Q$.

Since the infinitesimal automorphisms of $Q$ are identified with elements of the Lie algebra $\mathfrak g$ of $G$ (respectively the lie algebra $\mathfrak h$ of $H$), $\ad(\mathcal Q)$ is of rank $\dim(G)$ (respectively $\dim(H)$).
As a consequence, it follows from the Atiyah exact sequence \eqref{eq_ad_at_exact} that $\At(\mathcal Q)$ is of rank $\dim(G) + \dim(M)$ (respectively $\dim(H) + \dim(M)$).

A connection on $\mathcal Q$, seen as a distribution on $\mathcal Q$ transverse to the fibers of $\pi : \mathcal Q \to M$, defines at each point $x \in \mathcal Q$ a decomposition $\left(T_{\mathcal Q}\right)_x = \left(T_M\right)_{\pi(x)} \oplus \ker (d\pi)_x$, where $\left(T_M\right)_{\pi(x)}$ is identified with the vector subspace of $\left(T_{\mathcal Q}\right)_x$ given by the connection at $x$.
In particular, it defines a splitting of the exact sequence \eqref{eq_tangent_of_bundle_exact}, which in turn induces a splitting of the exact sequence \eqref{eq_ad_at_exact}.
In fact, the datum of a connection on $\mathcal Q$ is equivalent to the datum of a splitting of \eqref{eq_ad_at_exact}~: such a splitting enables to lift a vector field on $M$ to a vector field on $\mathcal Q$ whose restriction to each fiber is in the Atiyah bundle, which ensures that the associated parallel transport identifies the fibers of $\pi$ through isomorphisms of $(G, Q)$-spaces (respectively $(G, Q, x_0)$-spaces).

\subsection{Cartan Connections}\label{subsection_cartan_connections}

Consider on $M$ a Cartan geometry modeled on $(Q, G)$, given by a triple $(\mathcal Q, \nabla, s)$, where $\pi : \mathcal Q \to M$ is a bundle of $(G, Q)$-spaces, $\nabla$ is a connection on $\mathcal Q$ and $s$ is a section transverse to $\nabla$.
In particular the couple $(\mathcal Q, s)$ can be seen as a $(G, Q, x_0)$-bundle, that we denote by $\widetilde{\pi} : \widetilde{\mathcal Q} \to M$.
Note that $\widetilde{\mathcal Q}$ is a reduction of $\mathcal Q$ to the structure group $H$.
One has the inclusion $\ad\left(\widetilde{\mathcal Q}\right) \subset \ad(\mathcal Q)$.
The local trivializations of $\widetilde{\mathcal Q}$ are also local trivializations of $\mathcal Q$, which on an infinitesimal level implies $\At\left(\widetilde{\mathcal Q}\right) \subset \At(\mathcal Q)$.
If $m \in M$, denote by $Q_m$ (respectively $\widetilde Q_m$) the fiber of $\mathcal Q$ (respectively $\widetilde{\mathcal Q}$) over $m$.
One has $\left(T_{Q_m}\right)_{s(m)} = \ad(\mathcal Q)_m / \ad \left(\widetilde{\mathcal Q}\right)_m$.

Let $\left(W_{\xi}\right)_{\xi \in \mathcal Q}$ be the distribution on $\mathcal Q$ defining $\nabla$, in particular $W_{\xi} \subset \left(T_{\mathcal Q}\right)_{\xi}$ and $\left(T_{\mathcal Q}\right)_{\xi} = W_{\xi} \oplus \ker(d\pi)_{\xi}$.
Let also $\omega : \At(\mathcal Q) \to \ad(\mathcal Q)$ be the splitting of the exact sequence $0 \to \ad(\mathcal Q) \to \At(\mathcal Q) \to T_M \to 0$ given by $\nabla$. If $m \in M$ and $\mathcal V \in \At(\mathcal Q)_m \subset H^0\left(Q_m, T_{\mathcal Q}|_{Q_m}\right)$, then $\omega(\mathcal V) = 0$ if and only if $\mathcal V_{\xi} \in W_{\xi}$ for all $\xi \in Q_m$.

Since $\omega$ is a splitting, it is the identity on $\ad(\mathcal Q)$ and thus on $\ad\left(\widetilde{\mathcal Q}\right)$.
Suppose now $\mathcal V \in \At \left(\widetilde{\mathcal Q}\right)_m \subset \At(\mathcal Q)_m$.
In particular $\mathcal V_{s(m)}$ is tangent to the section $s$.
Since the connection $\nabla$ is transverse to $s$, $\omega(\mathcal V) = 0$, i.e. $\mathcal V_{s(m)} \in W_{s(m)}$, if and only if $\mathcal V_{s(m)} = 0$, which implies $d\pi(\mathcal V) = 0$, so $\mathcal V \in \ad \left(\widetilde{\mathcal Q}\right)$ and thus $\mathcal V = 0$ because $\omega$ is the identity on $\ad \left(\widetilde{\mathcal Q}\right)$.
As a consequence, $\omega$ is injective when restricted to $\At \left(\widetilde{\mathcal Q}\right)$.
Recall that $\ad(\mathcal Q)$ is of rank $\dim(G)$ and $\At \left(\widetilde{\mathcal Q}\right)$ is of rank $\dim(H) + \dim(M)$.
Since $\dim(M) = \dim(Q) = \dim(G)-\dim(H)$, the vector bundles $\ad(\mathcal Q)$ and $\At \left(\widetilde{\mathcal Q}\right)$ have the same rank.
So the injective morphism $\omega|_{\At \left(\widetilde{\mathcal Q}\right)}$ is an isomorphism.
Reciprocally, a splitting $\omega' : \At(\mathcal Q) \to \ad(\mathcal Q)$ defines a connection on $\mathcal Q$, and if moreover $\omega'|_{\At \left(\widetilde{\mathcal Q}\right)}$ is an isomorphism (in particular injective), the induced connection is transverse to $s$.
Notice that the vector subbundles $\At \left(\widetilde{\mathcal Q}\right)$ and $\ad(\mathcal Q)$ generate the vector bundle $\At(\mathcal Q)$, so that such a splitting $\omega'$ is determined by its restriction to $\At \left(\widetilde{\mathcal Q}\right)$.

\begin{definition}
A \emph{Cartan connection} on a bundle of $(G, Q)$-spaces $\mathcal Q$ on $M$ along with a reduction to a bundle of $(G, Q, x_0)$-spaces $\widetilde{\mathcal Q}$ is an isomorphism $\omega|_{\At \left(\widetilde{\mathcal Q}\right)} : \At \left(\widetilde{\mathcal Q}\right) \xrightarrow{\sim} \ad(\mathcal Q)$ that is the identity in restriction to $\ad \left(\widetilde{\mathcal Q}\right)$.
\end{definition}

\begin{proposition}
The datum of a Cartan geometry on $M$ modeled on $Q$ with the action of $G$ is equivalent to the datum of a bundle of $(G, Q)$-spaces $\mathcal Q$ on $M$, along with a reduction to a bundle of $(G, Q, x_0)$-spaces $\widetilde{\mathcal Q}$, and a Cartan connection on $\mathcal Q$ along with its reduction $\widetilde{\mathcal Q}$.
\end{proposition}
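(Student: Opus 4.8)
The plan is to exhibit mutually inverse constructions between the two sides of the equivalence, relying throughout on the correspondence, recalled above, between connections on $\mathcal{Q}$ and splittings of the Atiyah exact sequence \eqref{eq_ad_at_exact}. Most of the forward direction has already been carried out in the discussion preceding the definition: starting from a Cartan geometry $(\mathcal{Q}, \nabla, s)$, the pair $(\mathcal{Q}, s)$ defines a reduction $\widetilde{\mathcal{Q}}$ to a bundle of $(G, Q, x_0)$-spaces, and $\nabla$ determines a splitting $\omega : \At(\mathcal{Q}) \to \ad(\mathcal{Q})$ of \eqref{eq_ad_at_exact}. I have shown that transversality of $s$ to $\nabla$ forces $\omega|_{\At(\widetilde{\mathcal{Q}})}$ to be injective, and since $\At(\widetilde{\mathcal{Q}})$, of rank $\dim(H) + \dim(M)$, and $\ad(\mathcal{Q})$, of rank $\dim(G)$, have equal rank because $\dim(M) = \dim(G) - \dim(H)$, this restriction is an isomorphism; being a splitting, $\omega$ is the identity on $\ad(\mathcal{Q})$ and hence on $\ad(\widetilde{\mathcal{Q}})$. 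Thus $\omega|_{\At(\widetilde{\mathcal{Q}})}$ is a Cartan connection, which furnishes the data of the right-hand side.

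For the reverse direction, I would begin from data $(\mathcal{Q}, \widetilde{\mathcal{Q}}, \phi)$ where $\phi : \At(\widetilde{\mathcal{Q}}) \xrightarrow{\sim} \ad(\mathcal{Q})$ is a Cartan connection. The reduction $\widetilde{\mathcal{Q}}$ is, tautologically, a section $s$ of $\mathcal{Q}$ (the marked point in each fiber), so it remains to produce a connection $\nabla$, equivalently a splitting $\omega'$ of \eqref{eq_ad_at_exact} whose restriction to $\At(\widetilde{\mathcal{Q}})$ is $\phi$. The crucial point is the linear-algebraic identity $\At(\widetilde{\mathcal{Q}}) + \ad(\mathcal{Q}) = \At(\mathcal{Q})$ with $\At(\widetilde{\mathcal{Q}}) \cap \ad(\mathcal{Q}) = \ad(\widetilde{\mathcal{Q}})$: the intersection is computed by noting that an element of $\At(\widetilde{\mathcal{Q}})$ lying in $\ad(\mathcal{Q})$ maps to zero in $T_M$ and so lies in $\ker(\At(\widetilde{\mathcal{Q}}) \to T_M) = \ad(\widetilde{\mathcal{Q}})$, and the sum then has rank $\dim(H) + \dim(M) + \dim(G) - \dim(H) = \dim(G) + \dim(M) = \operatorname{rank}(\At(\mathcal{Q}))$. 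Since a Cartan connection is by definition the identity on the overlap $\ad(\widetilde{\mathcal{Q}})$, the prescription $\omega'|_{\At(\widetilde{\mathcal{Q}})} = \phi$ together with $\omega'|_{\ad(\mathcal{Q})} = \id$ glues to a well-defined splitting, which is moreover the unique splitting restricting to $\phi$. The reciprocal statement established above then shows the resulting connection is transverse to $s$ precisely because $\phi$ is injective, so $(\mathcal{Q}, \nabla, s)$ is a Cartan geometry.

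Finally I would verify that these constructions are mutually inverse. Both compositions reduce to the bijectivity of the connection–splitting dictionary for \eqref{eq_ad_at_exact}, combined with the uniqueness just established: a splitting arising from a Cartan geometry is recovered from its restriction to $\At(\widetilde{\mathcal{Q}})$ exactly because $\At(\widetilde{\mathcal{Q}})$ and $\ad(\mathcal{Q})$ generate $\At(\mathcal{Q})$, while the section read off from $\widetilde{\mathcal{Q}}$ is manifestly unchanged under both passages. I expect the main obstacle to be the careful bookkeeping behind the rank count and the intersection identity $\At(\widetilde{\mathcal{Q}}) \cap \ad(\mathcal{Q}) = \ad(\widetilde{\mathcal{Q}})$, since these are what simultaneously guarantee existence and uniqueness of the extension $\omega'$; once they are in place, the equivalence is a repackaging of the dictionary and the remaining checks are formal.
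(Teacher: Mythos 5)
Your proposal is correct and follows essentially the same route as the paper, which proves this proposition in the discussion of subsection \ref{subsection_cartan_connections} preceding the statement: transversality gives injectivity of $\omega|_{\At(\widetilde{\mathcal Q})}$, the rank count $\dim(M)=\dim(G)-\dim(H)$ upgrades this to an isomorphism, and the converse rests on the fact that $\At(\widetilde{\mathcal Q})$ and $\ad(\mathcal Q)$ generate $\At(\mathcal Q)$ so that a splitting is determined by its restriction. Your explicit verification that $\At(\widetilde{\mathcal Q}) \cap \ad(\mathcal Q) = \ad(\widetilde{\mathcal Q})$ and the accompanying rank computation merely make precise what the paper asserts in one line.
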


Consider a Cartan geometry on $M$ given by a bundle of $(G, Q)$-spaces $\mathcal Q$, a bundle of $(G, Q, x_0)$-spaces $\widetilde{\mathcal Q}$ and an isomorphism $\omega|_{\At \left(\widetilde{\mathcal Q}\right)} : \At \left(\widetilde{\mathcal Q}\right) \xrightarrow{\sim} \ad(\mathcal Q)$ that is the identity on $\ad \left(\widetilde{\mathcal Q}\right)$.
The isomorphism $\omega|_{\At \left(\widetilde{\mathcal Q}\right)}$ defines for each $m \in M$ an isomorphism of exact sequences~:
\begin{equation}\label{eq_iso_exact_cartan_connection}
\begin{tikzcd}
0 \arrow[r] &\ad \left(\widetilde{\mathcal Q}\right)_m \arrow[d, equal] \arrow[r] & \At \left(\widetilde{\mathcal Q}\right)_m \arrow[d, "\omega|_{\At \left(\widetilde{\mathcal Q}\right)_m}"] \arrow[r] & \left(T_M\right)_m \arrow[d, "\Phi_m"] \arrow[r] & 0 \\
0 \arrow[r] & \ad \left(\widetilde{\mathcal Q}\right)_m \arrow [r] & \ad(\mathcal Q)_m \arrow[r] & \left( T_{Q_m} \right)_{s(m)} \arrow[r] & 0
\end{tikzcd}
\end{equation}
In particular, the Cartan geometry comes with an identification of the tangent space of $M$ at each $m$ with the tangent space of $Q_m$ at $s(m)$.
This confirms that the space $Q_m$ is an osculating space to $Q$ at $m$, the point $s(m)$ being the point of contact.

\subsection{The Point of View of Principal Bundles}\label{subsection_principal_bundles}

Consider a $(G, Q)$-space $Q'$, and let $\mathcal T_{Q'}$ be the space of trivializations of $Q'$.
The group $G$ acts freely and transitively on the left on $\mathcal T_{Q'}$, so that $\mathcal T_{Q'}$ is a \emph{$G$-torsor}.
A trivialization $\alpha_i \in \mathcal T_{Q'}$ identifies the group $G$ to $\mathcal T_{Q'}$ by $g \in G \mapsto g \circ \alpha_i \in \mathcal T_{Q'}$.
The differential structure induced on $\mathcal T_{Q'}$ by this identification does not depend on the choice of $\alpha_i$.
Note that there is a surjective map
\begin{equation}
\begin{array}{rrcl}
\Phi :& \mathcal T_{Q'} \times Q & \to & Q' \\
&(\alpha, x) & \mapsto & \alpha^{-1}(x)
\end{array}
\end{equation}
and the fibers of $\Phi$ are the orbits of the left-action of $G$ on $\mathcal T_{Q'} \times Q$.
Thus $\Phi$ defines a canonical isomorphism $G \backslash \left(\mathcal T_{Q'} \times Q \right) \xrightarrow{\sim} Q'$.

Reciprocally, given a $G$-torsor $\mathcal T$, one can consider the quotient 
\begin{equation}\label{eq_bundle_as_quotient_principal}
Q' = G \backslash \left(\mathcal T \times Q \right)
\end{equation}
Since the action of $G$ on $\mathcal T$ is free and transitive, if $t_0 \in \mathcal T$, any $x' \in Q'$ has a unique representative in $\mathcal T \times Q$ of the form $(t_0, x)$.
Thus $t_0$ defines a diffeomorphism
\begin{equation}
\begin{array}{rrcl}
\alpha_{t_0} :& Q' & \to & Q \\
&[(t_0, x)] & \mapsto & x
\end{array}
\end{equation}

\begin{remark}\label{rk_quotient_principal_by_isotropies}
Having in mind the preferred point $x_0 \in Q$, fixed by the subgroup $H \subset G$, one has $Q' = H \backslash \mathcal T$, where if $t \in \mathcal T$, the class $[t] \in H \backslash \mathcal T$ is identified with the class $[t, x_0] \in G \backslash \left(\mathcal T \times Q \right) = Q'$.
\end{remark}

If $g \in G$, one has $\alpha_{g \cdot t_0} = g \circ \alpha_{t_0}$.
This proves that the diffeomorphisms $\left(\alpha_t\right)_{t \in \mathcal T}$ are the trivializations for a structure of $(G, Q)$-space on $Q'$.
In particular, the $G$-torsor $\mathcal T$ is canonically identified through the map $\alpha$ with the $G$-torsor $\mathcal T_{Q'}$ of trivializations of $Q'$.
The same holds, \emph{mutatis mutandis}, in the case of $(G, Q, x_0)$-spaces.
We have proved the following proposition~:

\begin{proposition}
The datum of a $(G, Q)$-space $Q'$ is equivalent to the datum of the $G$-torsor $\mathcal T_{Q'}$ of its trivializations.
The datum of a $(G, Q, x_0)$-space is equivalent to the datum of the $H$-torsor of its trivializations.
\end{proposition}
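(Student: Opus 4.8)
The statement asserts an equivalence between two kinds of data, so the plan is to exhibit two mutually inverse constructions and to check that they are compatible with isomorphisms, so that the correspondence holds both at the level of objects and of morphisms. The two constructions are exactly the ones introduced above: to a $(G, Q)$-space $Q'$ one associates its $G$-torsor of trivializations $\mathcal T_{Q'}$, and to a $G$-torsor $\mathcal T$ one associates the $(G, Q)$-space $G \backslash (\mathcal T \times Q)$ of \eqref{eq_bundle_as_quotient_principal}. Most of the object-level content is therefore already in place; what remains is to assemble it and to treat morphisms.

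For the round-trips, I would argue as follows. Starting from a $(G, Q)$-space $Q'$, the map $\Phi$ descends to the canonical isomorphism $G \backslash (\mathcal T_{Q'} \times Q) \xrightarrow{\sim} Q'$ already obtained, so reconstructing the space from its torsor of trivializations recovers $Q'$. Starting from a $G$-torsor $\mathcal T$ and setting $Q' = G \backslash (\mathcal T \times Q)$, the assignment $t \mapsto \alpha_t$ together with the relation $\alpha_{g \cdot t_0} = g \circ \alpha_{t_0}$ shows that the $\alpha_t$ are precisely the trivializations of $Q'$ and that $t \mapsto \alpha_t$ is a $G$-equivariant diffeomorphism $\mathcal T \xrightarrow{\sim} \mathcal T_{Q'}$; hence the torsor of trivializations of the reconstructed space is canonically $\mathcal T$. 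Both composites are thus the identity up to canonical isomorphism.

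The step that genuinely requires verification, and which I expect to be the main (if routine) point beyond re-quoting the discussion above, is the compatibility with morphisms. Given an isomorphism $\phi : Q_1 \xrightarrow{\sim} Q_2$ of $(G, Q)$-spaces, I would check that $\alpha \mapsto \alpha \circ \phi^{-1}$ sends trivializations of $Q_1$ to trivializations of $Q_2$ --- using that two trivializations differ by an element of $G$ and that by definition some $\alpha_1, \alpha_2$ satisfy $\alpha_1 = \alpha_2 \circ \phi$ --- and that it defines a $G$-equivariant diffeomorphism $\mathcal T_{Q_1} \xrightarrow{\sim} \mathcal T_{Q_2}$ depending functorially on $\phi$. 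Conversely, any $G$-equivariant map of torsors is automatically a bijection and induces a map on the associated quotients $G \backslash (\mathcal T \times Q)$, hence on the spaces; one checks these two assignments are inverse to each other and natural with respect to the canonical isomorphisms of the previous paragraph. This upgrades the object-level bijection to a genuine equivalence.

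Finally, for the pointed case I would proceed \emph{mutatis mutandis}. The trivializations of a $(G, Q, x_0)$-space are by definition those sending the marked point to $x_0$; since $H$ is precisely the stabilizer of $x_0$, any two of them differ by an element of $H$, so they form an $H$-torsor rather than a $G$-torsor. The reconstruction then uses Remark \ref{rk_quotient_principal_by_isotropies}, which identifies $H \backslash \mathcal T$ with the reconstructed space carrying its marked point $[t, x_0]$, and the morphism correspondence is verified exactly as above with $H$ in place of $G$. Here the only obstacle is bookkeeping, the essential constructions being already available.
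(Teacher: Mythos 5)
Your proposal is correct and follows essentially the same route as the paper, whose proof is precisely the discussion preceding the proposition: the map $\Phi$ giving $G \backslash (\mathcal T_{Q'} \times Q) \simeq Q'$ in one direction, the trivializations $\alpha_{t}$ with $\alpha_{g \cdot t_0} = g \circ \alpha_{t_0}$ in the other, and the \emph{mutatis mutandis} treatment of the pointed case via the stabilizer $H$. Your additional verification of compatibility with isomorphisms is a harmless strengthening that the paper leaves implicit.
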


Fix a $(G, Q)$-space (respectively a $(G, Q, x_0)$-space) $Q'$ and its torsor of trivializations $\mathcal T_{Q'}$.
The group $\Aut(Q')$ of automorphisms of $Q'$ acts on the right on $\mathcal T_{Q'}$ by precomposition, and this action is free and transitive.
Thus, while being a \emph{left} $G$-torsor (respectively a left $H$-torsor), $\mathcal T_{Q'}$ is a \emph{right} $\Aut(Q')$-torsor.
As a consequence, if $\alpha \in \mathcal T_{Q'}$ and $V \in \left(T_{\mathcal T_{Q'}}\right)_{\alpha}$, the tangent vector $V$ can be seen either as an infinitesimal action of $\Aut(Q')$, i.e. an element of $\ad(Q')$, or as an infinitesimal action of $G$, i.e. an element of $\mathfrak g$ (respectively of $\mathfrak h$).
In other words, one has identifications $\ad(Q') \simeq \left(T_{\mathcal T_{Q'}}\right)_{\alpha} \simeq \mathfrak g$ (respectively $\ad(Q') \simeq \left(T_{\mathcal T_{Q'}}\right)_{\alpha} \simeq \mathfrak h$).
The composed identification $\ad(Q') \simeq \mathfrak g$ (respectively $\ad(Q') \simeq \mathfrak h$) is the one given by the trivialization $\alpha$ of $Q'$.
The actions of $G$ (respectively $H$) and $\Aut(Q')$ on $\mathcal T_{Q'}$ commute.
The identifications $\left(T_{\mathcal T_{Q'}}\right)_{\alpha} \simeq \mathfrak g$ identify the tangent bundle $T_{\mathcal T_{Q'}}$ to $\mathfrak g \times \mathcal T_{Q'}$.
The left-action of $G$ on $\mathcal T_{\mathcal Q'}$ induces, by differentiation, a left-action of $G$ on $T_{\mathcal T_{Q'}}$ and thus, by the previous identification, on $\mathfrak g$.
This left-action is the adjoint action of $G$ on $\mathfrak g$.
The $\mathfrak g$-valued $1$-form on $\mathcal T_{Q'}$ given by the identification $T_{\mathcal T_{Q'}} \simeq \mathfrak g \times \mathcal T_{Q'}$ is called the \emph{Maurer-Cartan form}

The global vector fields on $\mathcal T_{Q'}$ that are invariant under (i.e. that commute with) the left action of $G$ (respectively $H$), form a Lie algebra that is canonically identified to $\ad(Q')$.
In particular the flow of such a vector field is given by the action of an element of $\Aut(Q')$.
Conversely, global vector fields that are invariant under the right action of $\Aut(Q')$ form a Lie algebra that is identified to $\mathfrak g$ (respectively $\mathfrak h$), and the flow of such a vector field is given by the action of an element in $G$ (respectively $\mathfrak g$).

The above study can be made in family.
Given a bundle $\pi : \mathcal Q \to M$ of $(G, Q)$-spaces on the manifold $M$, one can consider its bundle of trivializations $\widetilde \pi : \mathcal T_{\mathcal Q} \to M$, whose fiber over $m \in M$ is the $G$-torsor $\mathcal T_{Q_m}$ of trivializations of the fiber $Q_m$ of $\mathcal Q$.
The bundle $\mathcal T_{\mathcal Q}$ is a \emph{$G$-principal bundle}, i.e. a bundle of $G$-torsors.
Reciprocally, given a $G$-principal bundle $\mathcal T$, one gets a bundle of $(G, Q)$-spaces by considering the quotient $G \backslash (\mathcal T \times Q)$.

\begin{remark}
The equivalence between a bundle $\mathcal Q$ of $(G, Q)$-spaces and the associated principal bundle $\mathcal T_{\mathcal Q}$ of trivializations allows to speak indifferently of $\mathcal Q$ or $\mathcal T_{\mathcal Q}$.
In particular, the adjoint and Atiyah bundles of $\mathcal Q$ are also the adjoint and Atiyah bundles of $\mathcal T_{\mathcal Q}$~: $\ad(\mathcal Q) = \ad\left(\mathcal T_{\mathcal Q}\right)$ and $\At(\mathcal Q) = \At\left(\mathcal T_{\mathcal Q}\right)$.
\end{remark}

Suppose given a connection $\nabla$ on $\pi : \mathcal Q \to M$.
It defines a parallel transport on $\mathcal Q$, thus a parallel transport on the bundle of trivializations $\widetilde \pi : \mathcal T_{\mathcal Q} \to M$, that is equivariant with respect to the action of $G$ on $\mathcal T_{\mathcal Q}$.
To such a parallel transport is associated a connection $\widetilde \nabla$, thus a distribution $\left(W_{\alpha}\right)_{\alpha \in \mathcal T_{\mathcal Q}}$ on $\mathcal T_{\mathcal Q}$ with for any $\alpha \in \mathcal T_{\mathcal Q}$, $\left(T_{\mathcal T_{\mathcal Q}}\right)_{\alpha} = W_{\alpha} \oplus \ker (d \widetilde \pi)_{\alpha}$. 
The equivariance of the parallel transport induced by $\widetilde \nabla$ is equivalent to the invariance of the distribution $\left(W_{\alpha}\right)_{\alpha \in \mathcal T_{\mathcal Q}}$ with respect to the action of $G$~: for any $g \in G$ and $\alpha \in \mathcal T_{\mathcal Q}$, $g \cdot W_{\alpha} = W_{g \cdot \alpha}$, where the action of $G$ on the tangent space of $\mathcal T_{\mathcal Q}$ is the differential of the action of $G$ on $\mathcal T_{\mathcal Q}$.

\begin{definition}
A \emph{principal connection} on a $G$-principal bundle $\widetilde \pi : \mathcal T_{\mathcal Q} \to M$ is a distribution $\left(W_{\alpha}\right)_{\alpha \in \mathcal T_{\mathcal Q}}$ that is transverse to the fibers of $\widetilde \pi$ and that is invariant by the action of $G$~: for any $g \in G$, $g \cdot W_{\alpha} = W_{g \cdot \alpha}$.
\end{definition}

\begin{proposition}
The datum of a connection on $\mathcal Q$ is equivalent to the datum of a principal connection on its principal bundle of trivializations $\mathcal T_{\mathcal Q}$.
\end{proposition}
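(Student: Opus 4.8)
The plan is to exploit the equivalence $\mathcal Q = G \backslash (\mathcal T_{\mathcal Q} \times Q)$ established above, together with the fact that both notions of connection are encoded by their parallel transport. The heart of the matter is to match two conditions: that the parallel transport of a connection on $\mathcal Q$ acts by isomorphisms of $(G, Q)$-spaces on the fibers, and that the distribution of a principal connection on $\mathcal T_{\mathcal Q}$ is invariant under the left action of $G$.

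The forward direction is already essentially contained in the discussion preceding the definition, and I would only make it precise. Given a connection $\nabla$ on $\mathcal Q$ and a path $\gamma$ from $m$ to $m'$ in $M$, its parallel transport is an isomorphism of $(G,Q)$-spaces $P_\gamma : Q_m \xrightarrow{\sim} Q_{m'}$. Precomposition with $P_\gamma^{-1}$ sends a trivialization $\alpha \in \mathcal T_{Q_m}$ to $\alpha \circ P_\gamma^{-1} \in \mathcal T_{Q_{m'}}$ (which is again a trivialization precisely because $P_\gamma$ is an isomorphism of $(G,Q)$-spaces), defining a parallel transport on $\mathcal T_{\mathcal Q}$ and hence a transverse distribution $(W_\alpha)$. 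Since the left action of $G$ on trivializations is by postcomposition, one has $(g \circ \alpha) \circ P_\gamma^{-1} = g \circ (\alpha \circ P_\gamma^{-1})$, so this transport commutes with the $G$-action and $(W_\alpha)$ is $G$-invariant, i.e. a principal connection.

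For the reverse direction, I would start from a $G$-invariant transverse distribution $(W_\alpha)$ on $\mathcal T_{\mathcal Q}$ and define the associated parallel transport on $\mathcal Q$ directly. Given a path $\gamma$ from $m$ to $m'$, I would choose a horizontal lift $\alpha : [0,1] \to \mathcal T_{\mathcal Q}$, so that each $\alpha(t)$ is a trivialization of $Q_{\gamma(t)}$, and set the transport $Q_m \to Q_{m'}$ to be $\alpha(1)^{-1} \circ \alpha(0)$. This is manifestly an isomorphism of $(G,Q)$-spaces, being a composite of trivializations. Equivalently, and in order to see smoothness and that we obtain a genuine distribution, I would observe that the product distribution $(W_\alpha \oplus 0)$ on $\mathcal T_{\mathcal Q} \times Q$ is invariant under the diagonal $G$-action and therefore descends to a transverse distribution on the quotient $\mathcal Q$.

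The step I expect to be the main obstacle --- and the place where $G$-invariance is essential --- is the well-definedness of this transport: I must check that it does not depend on the chosen horizontal lift. Because the connection is $G$-invariant, the left translates $g \cdot \alpha$ of a horizontal lift are again horizontal, and since each fiber is a single $G$-orbit and horizontal lifts are determined by their starting point, every horizontal lift of $\gamma$ has the form $g \cdot \alpha$ for a fixed $g \in G$. The computation $(g \circ \alpha(1))^{-1} \circ (g \circ \alpha(0)) = \alpha(1)^{-1} \circ \alpha(0)$ then shows the transport is independent of the lift. Finally I would verify that the two constructions are mutually inverse: feeding $P_\gamma$ through the forward construction yields the transport $\alpha \mapsto \alpha \circ P_\gamma^{-1}$, whose reconstructed transport $\alpha(1)^{-1} \circ \alpha(0) = (\alpha(0)\circ P_\gamma^{-1})^{-1} \circ \alpha(0) = P_\gamma$ recovers the original, and symmetrically in the other order. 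This establishes the claimed equivalence.
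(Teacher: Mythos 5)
Your proof is correct and follows essentially the same route as the paper, which treats this proposition as a direct consequence of the preceding discussion: a connection on $\mathcal Q$ induces a $G$-equivariant parallel transport on $\mathcal T_{\mathcal Q}$ (via $\alpha \mapsto \alpha \circ P_\gamma^{-1}$), and equivariance of the transport is equivalent to $G$-invariance of the associated distribution. You merely spell out the converse and the well-definedness check that the paper leaves implicit, which is a faithful completion rather than a different argument.
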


Note that the pullback vector bundle $\widetilde \pi^* \At(\mathcal Q)$ (respectively $\widetilde \pi^* \ad(\mathcal Q)$) has fiber $\left(T_{\mathcal T_{\mathcal Q}}\right)_{\alpha}$ (respectively $\ker (d \widetilde \pi)_{\alpha}$) over $\alpha \in \mathcal T_{\mathcal Q}$.
A principal connection $\left( W_{\alpha} \right)_{\alpha \in \mathcal T_{\mathcal Q}}$ defines for each $\alpha \in \mathcal T_{\mathcal Q}$ a projection $\varpi : \left(T_{\mathcal T_{\mathcal Q}} \right)_{\alpha} \to \ker (d \widetilde \pi)_{\alpha}$, of kernel $W_{\alpha}$.
The projection $\varpi$ is the pullback of the projection $\omega : \At(\mathcal Q) \to \ad(\mathcal Q)$ defined by the connection on $\mathcal Q$ associated to $\left(W_{\alpha}\right)_{\alpha \in \mathcal T_{\mathcal Q}}$.

Since $\ker (d \widetilde \pi)_{\alpha}$ is the tangent space to the fiber of $\mathcal T_{\mathcal Q}$ over $\widetilde \pi(\alpha)$, which is a $G$-torsor, there is an identification $\ker (d \widetilde \pi)_{\alpha} \simeq \mathfrak g$.
In other words, the pullback of $\ad(\mathcal Q)$ by $\widetilde \pi$ is the trivial bundle $\mathfrak g \times \mathcal T_{\mathcal Q}$.
Thus a principal connection can be seen as a map $T_{\mathcal T_{\mathcal Q}} \to \mathfrak g$, that is equivariant for the action of $G$ on $T_{\mathcal T_{\mathcal Q}}$ and the adjoint action of $G$ on $\mathfrak g$.

Let us use this description of principal connections to give another formulation of the notion of Cartan geometry.
Suppose given a bundle $\widetilde{\mathcal Q}$ of $(G, Q, x_0)$-spaces on $M$, and write $\mathcal Q$ the associated bundle of $(G, Q)$-spaces.
We saw that a Cartan geometry on $M$ with underlying bundles $\mathcal Q$ and $\mathcal Q'$ is given by an isomorphism $\omega|_{\At\left(\widetilde {\mathcal Q}\right)} : \At\left(\widetilde {\mathcal Q} \right) \xrightarrow{\sim} \ad(\mathcal Q)$ that is the identity in restriction to $\ad\left(\widetilde {\mathcal Q} \right)$. 
Let $\widetilde{\widetilde \pi} : \mathcal T_{\widetilde {\mathcal Q}} \to M$ be the $H$-principal bundle of trivializations of $\widetilde {\mathcal Q}$, and $\widetilde \pi : \mathcal T_{\mathcal Q}$ the $G$-principal bundle of trivializations of $\mathcal Q$, so that $\mathcal T_{\widetilde {\mathcal Q}} \subset \mathcal T_{\mathcal Q}$. 
One has $\widetilde{\widetilde \pi}^* \At\left(\widetilde {\mathcal Q} \right) = T_{\mathcal T_{\widetilde {\mathcal Q}}}$, $\widetilde{\widetilde \pi}^*\ad(\mathcal Q) = \ker d\widetilde \pi|_{T_{\widetilde {\mathcal Q}}}$ and $\widetilde{\widetilde \pi}^*\ad\left(\widetilde {\mathcal Q}\right) = \ker d \widetilde{\widetilde \pi}$.

We have just noted that $\ker d\widetilde \pi \simeq \mathfrak g \times \mathcal T_{\mathcal Q}$, and similarly $\ker d \widetilde{\widetilde \pi} \simeq \mathfrak h \times \mathcal T_{\widetilde {\mathcal Q}}$.
Thus the pullback of the map $\omega|_{\At\left(\widetilde {\mathcal Q}\right)}$ by $\widetilde{\widetilde \pi}$ can be seen as a $\mathfrak g$-valued $1$-form $\varpi$ on $\mathcal T_{\widetilde {\mathcal Q}}$ that is the Maurer-Cartan form in restriction to the fibers of $\widetilde{\widetilde \pi}$ and such that $\varpi$ is equivariant for the action of $H$ on $\mathcal T_{\widetilde {\mathcal Q}}$ and the adjoint action of $H$ on $\mathfrak g$.
Moreover, for any $\alpha \in \mathcal T_{\widetilde {\mathcal Q}}$, $\varpi_{\alpha}$ is an isomorphism.
This construction can be reversed, so that we have~:

\begin{proposition}
The datum of a Cartan geometry on $M$, modeled on $Q$ is equivalent to the datum of a principal $H$-bundle $\mathcal T$, along with a $\mathfrak g$-valued $1$-form $\varpi$ on $\mathcal T$ such that $\varpi$ is the Maurer-Cartan form in restriction to the fibers of $\mathcal T$, $\varpi$ is equivariant with respect to the action of $H$ on $\mathcal T$ and its adjoint action on $\mathfrak g$, and $\varpi_{\alpha}$ is an isomorphism for all $\alpha \in \mathcal T$.
\end{proposition}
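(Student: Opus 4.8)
The plan is to read off both directions of the equivalence from the two preceding correspondences together with the explicit construction carried out just above the statement. Recall that, by the previous proposition, a Cartan geometry on $M$ is the same datum as a bundle $\mathcal Q$ of $(G, Q)$-spaces, a reduction $\widetilde{\mathcal Q}$ to a bundle of $(G, Q, x_0)$-spaces, and a Cartan connection, i.e. an isomorphism $\omega|_{\At(\widetilde{\mathcal Q})} : \At(\widetilde{\mathcal Q}) \xrightarrow{\sim} \ad(\mathcal Q)$ restricting to the identity on $\ad(\widetilde{\mathcal Q})$. On the other hand, the proposition on torsors of trivializations identifies the bundle $\widetilde{\mathcal Q}$ of $(G, Q, x_0)$-spaces with its $H$-principal bundle of trivializations $\mathcal T = \mathcal T_{\widetilde{\mathcal Q}}$, while the induced $G$-principal bundle $\mathcal T_{\mathcal Q}$ recovers $\mathcal Q$. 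So it suffices to match the Cartan connection $\omega|_{\At(\widetilde{\mathcal Q})}$ with a $\mathfrak g$-valued $1$-form $\varpi$ on $\mathcal T$ having the three stated properties.

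For the forward direction I would invoke the construction preceding the statement. Using the identifications $\widetilde{\widetilde\pi}^*\At(\widetilde{\mathcal Q}) = T_{\mathcal T}$ and $\widetilde{\widetilde\pi}^*\ad(\mathcal Q) \simeq \mathfrak g \times \mathcal T$, the pullback of $\omega|_{\At(\widetilde{\mathcal Q})}$ by the projection $\widetilde{\widetilde\pi} : \mathcal T \to M$ is precisely a $\mathfrak g$-valued $1$-form $\varpi$ on $\mathcal T$. The three conditions then correspond term by term to the defining properties of the Cartan connection: that $\omega|_{\At(\widetilde{\mathcal Q})}$ is an isomorphism pulls back to $\varpi_{\alpha}$ being an isomorphism at every $\alpha$; that $\omega$ is the identity on $\ad(\widetilde{\mathcal Q})$ pulls back, via $\widetilde{\widetilde\pi}^*\ad(\widetilde{\mathcal Q}) = \ker d\widetilde{\widetilde\pi} \simeq \mathfrak h \times \mathcal T$, to $\varpi$ restricting to the Maurer-Cartan form on the fibers of $\widetilde{\widetilde\pi}$; and the fact that $\omega$ is a morphism of vector bundles over $M$ makes its pullback equivariant for the $H$-action on $\mathcal T$ and the adjoint $H$-action on $\mathfrak g$ (the latter being the restriction to $H$ of the $G$-equivariance of the trivialization $\ker d\widetilde\pi \simeq \mathfrak g$).

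For the reverse direction, I would start from an $H$-principal bundle $\mathcal T$ and a form $\varpi$ with the three properties. The torsor correspondence turns $\mathcal T$ into a bundle $\widetilde{\mathcal Q}$ of $(G, Q, x_0)$-spaces, whence a bundle $\mathcal Q$ of $(G, Q)$-spaces by extension of the structure group from $H$ to $G$, together with the reduction $\widetilde{\mathcal Q} \subset \mathcal Q$. Reading the identifications above backwards, $\varpi$ is a $\mathfrak g$-valued $1$-form on $\mathcal T = \mathcal T_{\widetilde{\mathcal Q}}$ that is $H$-equivariant, hence descends to a morphism of vector bundles $\At(\widetilde{\mathcal Q}) \to \ad(\mathcal Q)$ over $M$; the Maurer-Cartan condition on the fibers makes this morphism the identity on $\ad(\widetilde{\mathcal Q})$, and the pointwise isomorphism condition makes it an isomorphism. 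This is exactly a Cartan connection, so the earlier proposition produces a Cartan geometry. Since descent along $\widetilde{\widetilde\pi}$ and pullback by $\widetilde{\widetilde\pi}$ are mutually inverse on $H$-equivariant objects, the two constructions are inverse to one another.

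The step I expect to require the most care is the equivariant descent in the reverse direction: one must verify that the $H$-equivariance of $\varpi$ is exactly the condition needed for it to arise as the pullback of a bundle map on $M$, and that the Maurer-Cartan normalization on the vertical directions corresponds, under the identifications $\ker d\widetilde{\widetilde\pi} \simeq \mathfrak h$ and $\ker d\widetilde\pi \simeq \mathfrak g$, to the normalization ``identity on $\ad(\widetilde{\mathcal Q})$''. This amounts to keeping the left $G$- (respectively $H$-) action and the right $\Aut$-action on the torsors of trivializations consistently aligned throughout, as in the identifications $\ad(Q') \simeq (T_{\mathcal T_{Q'}})_{\alpha} \simeq \mathfrak g$ established earlier.
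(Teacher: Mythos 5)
Your proposal is correct and follows essentially the same route as the paper, which establishes the forward direction by pulling back the Cartan connection $\omega|_{\At(\widetilde{\mathcal Q})}$ along $\widetilde{\widetilde\pi}$ using the identifications $\widetilde{\widetilde\pi}^*\At(\widetilde{\mathcal Q}) = T_{\mathcal T_{\widetilde{\mathcal Q}}}$ and $\ker d\widetilde\pi \simeq \mathfrak g \times \mathcal T_{\mathcal Q}$, and then simply remarks that the construction can be reversed. Your explicit treatment of the equivariant descent in the reverse direction fills in exactly the step the paper leaves implicit.
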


\section{Projective Structures and the Projective Osculating Line}\label{section_projective_structures_projective_osculating_line}

From now on, $X$ is a Riemann surface.
Let $\mathcal O_X$ denote the trivial bundle of $X$, $T_X$ its tangent bundle and $K_X$ its cotangent bundle.
In the rest of the paper, the notions defined in section \ref{section_cartan_geometries} are used in a holomorphic framework, meaning that smoothness conditions are replaced by holomorphicity conditions.
\subsection{Holomorphic Affine Structures and Holomorphic Affine Connections on a Riemann Surface}\label{subsection_affine_structures}

Before coming to projective structures, let us examine the simpler case of \emph{affine structures}, of which the description in terms of Cartan geometries involves very familiar notions of differential geometry.

In this section only, $Q = \C$ and $G = \Aff(\C)$, the group of holomorphic affine transformations acting on $\C$. 
\begin{definition}
A $(\Aff(\C), \C)$-structure on $X$ whose charts are holomorphic is called a \emph{holomorphic affine structure}.
\end{definition}

The affine space $\C$ has a privileged point, namely $x_0 = 0 \in \C$.
The subgroup of $G$ fixing $0$ is the group of holomorphic dilatations $\C^*$.
A bundle of $(G, Q)$-spaces on $X$ is a holomorphic affine bundle on $X$.
A bundle of $(G, Q, x_0)$-spaces on $X$ is a holomorphic line bundle on $X$.
There is a preferred holomorphic line bundle on $X$, namely the tangent bundle $T_X$. 
Let us denote by $\Aff_X$ the associated holomorphic affine bundle, and $s : X \to \Aff_X$ the section of $\Aff_X$ given by the reduction $T_X$, namely the zero section.
Of course, tautologically, for any $x \in X$~: $\left(T_X\right)_x = \left(T_{\Aff_{X, x}}\right)_{s(x)}$.

A Cartan connection $\omega|_{\At\left(T_X\right)}$ on the $(G, Q)$-bundle $\Aff_X$ along with its reduction $T_X$ induces for any $x \in X$ an isomorphism \eqref{eq_iso_exact_cartan_connection} $\Phi_x : \left(T_X\right)_x \xrightarrow{\sim} \left(T_{\Aff_{X, x}}\right)_{s(x)} = \left(T_X\right)_x$, thus an automorphism of $\left(T_X\right)_x$.

\begin{definition}\label{def_affine_connection}
An \emph{affine connection} on $X$ is a Cartan connection on $\Aff_X$ along with its reduction $T_X$, such that for any $x \in X$ the induced automorphism of $\left(T_X\right)_x$ is the identity.
\end{definition}

Now suppose given two affine connections on $X$, namely $\omega|_{\At\left(T_X\right)}$ and $\omega'|_{\At\left(T_X\right)}$.
By definition and the diagram \eqref{eq_iso_exact_cartan_connection}, these two forms coincide when composed with the quotient $\At(T_X) \to \break \At(T_X) / \ad(T_X)$. 
Thus the difference $\omega'|_{\At\left(T_X\right)} - \omega|_{\At\left(T_X\right)}$ takes values in $\ad\left(T_X\right)$.
Denote respectively by $\omega, \omega' : \At\left(\Aff_X\right) \to \ad\left(\Aff_X\right)$ their extensions to $\At\left(\Aff_X\right)$, that are splittings of the Atiyah exact sequence \eqref{eq_ad_at_exact}
\begin{equation}
0 \to \ad\left(\Aff_X\right) \to \At\left(\Aff_X\right) \to T_X \to 0
\end{equation}
and write respectively $\widetilde \omega, \widetilde \omega' : T_X \to \At\left(\Aff_X\right)$ the corresponding splitting morphisms.
The difference $\omega' - \omega$ still takes values in $\ad\left(T_X\right)$, thus so does the difference $\widetilde \omega' - \widetilde \omega$~: it is a holomorphic linear form $\widetilde \omega' - \widetilde \omega : T_X \to \ad\left(T_X\right)$.
Moreover the adjoint bundle of any line bundle is trivial, so $\ad\left(T_X\right) = \mathcal O_X$, the structure sheaf of $X$.
In other words $\widetilde \omega' - \widetilde \omega$ is a section of $K_X$, the cotangent bundle of $X$.
Moreover any section of $K_X$ can be obtained as the difference $\widetilde \omega' - \widetilde \omega$ for some pair of affine connections, and an affine connection $\omega'|_{\At\left(T_X\right)}$ is uniquely determined by the associated splitting morphism $\widetilde \omega$. One has the following proposition~:

\begin{proposition}
The set of affine connections on the Riemann surface $X$ has the structure of an affine space, directed by the vector space of global holomorphic differentials $H^0\left(X, K_X\right)$.
\end{proposition}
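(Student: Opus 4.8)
The plan is to realize the set of affine connections as a torsor under $H^0(X, K_X)$, turning the preceding discussion into a verification of the affine-space axioms. Throughout I regard an affine connection as a splitting of the Atiyah exact sequence \eqref{eq_ad_at_exact}, $0 \to \ad(\Aff_X) \to \At(\Aff_X) \to T_X \to 0$, subject to two normalizations: it restricts to the identity on $\ad(T_X)$ (this is built into the notion of Cartan connection) and the endomorphism $\Phi_x$ of $(T_X)_x$ produced by diagram \eqref{eq_iso_exact_cartan_connection} is the identity (Definition \ref{def_affine_connection}).

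First I would pin down the difference map. Two affine connections give splitting morphisms $\widetilde\omega, \widetilde\omega' : T_X \to \At(\Aff_X)$, both sections of the projection onto $T_X$; hence $\widetilde\omega' - \widetilde\omega$ is annihilated by that projection and defines an $\mathcal O_X$-linear map $T_X \to \ad(\Aff_X)$. The crucial point — and the step that actually uses the affine normalization rather than mere linear algebra — is that this difference lands in the rank-one subbundle $\ad(T_X) \subset \ad(\Aff_X)$, not in all of $\ad(\Aff_X)$: the condition $\Phi_x = \id$ says that $\widetilde\omega$ and $\widetilde\omega'$ induce the same map after projecting $\ad(\Aff_X)$ onto $(T_{\Aff_{X,x}})_{s(x)} = \ad(\Aff_X)/\ad(T_X)$, so the difference dies in that quotient and survives only in $\ad(T_X)$. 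Invoking the canonical trivialization $\ad(T_X) \cong \mathcal O_X$, I would then read $\widetilde\omega' - \widetilde\omega$ as an element of $\Hom(T_X, \mathcal O_X) = H^0(X, K_X)$.

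Next I would check the torsor axioms for the assignment $(\omega, \omega') \mapsto \widetilde\omega' - \widetilde\omega$. Additivity (Chasles' relation) is immediate, since differences of sections of a fixed surjection add. Freeness uses that an affine connection is determined by its splitting morphism, so $\widetilde\omega' - \widetilde\omega = 0$ forces the two connections to coincide. For transitivity I would fix an affine connection $\omega$ and a form $\eta \in H^0(X, K_X)$, view $\eta$ as a map $T_X \to \ad(T_X) \subset \At(\Aff_X)$ via the trivialization above, and verify that $\widetilde\omega + \eta$ is a new splitting realizing $\eta$ as a difference: the added term lies in $\ad(T_X)$, so it alters neither the projection to $T_X$ nor the map induced modulo $\ad(T_X)$, whence both normalizations persist.

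The step I would treat most carefully is the identification $\ad(T_X) \cong \mathcal O_X$, and the resulting $\Hom(T_X, \ad(T_X)) \cong K_X$: one must check that the trivialization of $\ad(T_X)$ is canonical, independent of local trivializations of the line bundle $T_X$. This is exactly where the abelianness of the isotropy group $\C^*$ enters, its adjoint action being trivial, and it is the main geometric input beyond the torsor bookkeeping. Finally, for the statement to assert a nonempty affine space one needs at least one affine connection to exist; I would address this separately, the difference construction above showing that the set of affine connections, once nonempty, is precisely a torsor under $H^0(X, K_X)$ and hence an affine space directed by it.
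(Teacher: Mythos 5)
Your proposal is correct and follows essentially the same route as the paper: the normalization $\Phi_x = \id$ forces the difference of splitting morphisms into $\ad(T_X)$, the canonical trivialization $\ad(T_X) \simeq \mathcal O_X$ (from the triviality of the adjoint action of the abelian isotropy group $\C^*$) identifies that difference with a global section of $K_X$, and the correspondence is a bijection onto $H^0(X, K_X)$. Your added care about the torsor axioms and about nonemptiness (which the paper leaves implicit here, though it acknowledges the issue in the branched analogue) is a refinement, not a different argument.
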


Let us have a look at the principal bundles associated to $\Aff_X$ and its reduction $T_X$.
For any $x \in X$, an affine isomorphism $\alpha$ from $\Aff_{X, x}$ to $\C$ is given by a point of $\C$, namely the image of $s(x)$ by $\alpha$, along with a linear isomorphism from the vector space $T_{X, x} = \left(T_{\Aff_X, x}\right)_{s(x)}$ to $\C$, namely the differential of $\alpha$ at $s(x)$.
In other words, a trivilization of the affine space $\Aff_{X, x}$ is given by a $1$-jet of biholomorphism from a neighborhood of $x$ to $\C$.
Thus the $\Aff(\C)$-principal bundle $\paff_X$ associated to $\Aff_X$ has fiber over $x$~:
\begin{equation}
\paff_{X, x} =\left \{j^1_x \phi | \phi \text{ is the germ at $x$ of a local biholomorphism from $X$ to $\C$} \right \}
\end{equation}
The principal bundle $\paff_X$ is also called the \emph{bundle of affine $1$-frames} on $X$.
Now a trivialization of $T_{X, x}$ is given by a trivialization of $\Aff_{X, x}$ sending $s(x)$ to $0$, so the $\C^*$-principal bundle $\mathcal T_X$ associated to $T_X$ has fiber over $x$~:
\begin{equation}
\mathcal T_{X, x} =\left \{j^1_x \phi | \phi \text{ is the germ at $x$ of a local biholomorphism from $X$ to $\C$, $\phi(x) = 0$} \right \}
\end{equation}

Let $(U_i, z_i)_{i \in I}$ be the atlas of a holomorphic affine structure on $X$~: on $U_i \cap U_j$, one has $z_i = g_{ij} \circ z_j$, with $g_{ij} \in \Aff(\C)$.
For each $i \in I$, one gets a local holomorphic section (thus trivialization) of $\paff_X|_{U_i}$ by considering $x \in U_i \mapsto j^1_xz_i \in \paff_{X, x}$.
Moreover for any $i, j \in I$, on the intersection $U_i \cap U_j$, the change of trivialization is the constant $g_{ij} \in \Aff(\C)$, thus the local connections on each $U_i$ associated to the trivializations defined by the charts $z_i$ glue together on the intersections $U_i \cap U_j$.
As a consequence, an affine structure on $X$ defines a global connection on $\paff_X$.

A straightforward calculation, similar to the proofs of proposition \ref{atiyah_1jet} and lemma \ref{derivee_schwarzienne} below, shows that the connection on $\paff_X$ given by an affine structure is an affine connection, and that any affine connection comes from a unique affine structure, so that the following proposition holds~:
\begin{proposition}
The datum of a holomorphic affine structure on the Riemann surface $X$ is equivalent to the datum of an affine connection on $X$.
In particular, the set of affine structures on $X$ is an affine space, directed by $H^0(X, K_X)$.
\end{proposition}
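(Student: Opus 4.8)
The plan is to establish the two directions of the equivalence and then read off the affine-space statement from the preceding proposition. Throughout I would work with the bundle of affine $1$-frames $\paff_X$, its $\C^*$-reduction $\mathcal T_X$ associated to $T_X$, and the description of a Cartan connection as a $\mathfrak g$-valued $1$-form on the $\C^*$-bundle $\mathcal T_X$ (Maurer--Cartan along the fibers, equivariant, and a fiberwise isomorphism) obtained at the end of Section~\ref{subsection_principal_bundles}.

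For the direction from structures to connections, the global flat connection on $\paff_X$ attached to an affine atlas $(U_i, z_i)$ has already been produced above: the local sections $x \mapsto j^1_x z_i$ differ on overlaps by the constant cocycle $g_{ij} \in \Aff(\C)$, so the flat connections they define glue. What remains is to check that this connection is an affine connection in the sense of Definition~\ref{def_affine_connection}, namely that it is a Cartan connection (transverse to the section $s$) and that the induced automorphism $\Phi_x$ of $\left(T_X\right)_x$ in diagram \eqref{eq_iso_exact_cartan_connection} is the identity. Both are local computations in a chart: there $\Aff_X$ trivializes as $U_i \times \C$ with horizontal leaves $U_i \times \{\mathrm{const}\}$, and the section becomes the graph $x \mapsto (x, z_i(x))$. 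Transversality reduces to $dz_i \neq 0$, which holds since $z_i$ is a local biholomorphism. For the identity condition one computes $\Phi_x(V)$ as the vertical component of $ds_x(V)$, which in the chart is $dz_{i,x}(V)$; under the tautological identification $\left(T_{\Aff_{X,x}}\right)_{s(x)} = \left(T_X\right)_x$ this is exactly $V$, so $\Phi_x = \id$. This is precisely the kind of $1$-jet computation carried out in proposition \ref{atiyah_1jet} and lemma \ref{derivee_schwarzienne}.

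For the converse, the decisive feature of a Riemann surface is that every holomorphic connection is automatically flat: its curvature is a holomorphic $2$-form with values in the adjoint bundle, and such forms vanish because $X$ has complex dimension one. Hence an affine connection is a flat Cartan connection, and by the equivalence between $(G, Q)$-structures and flat Cartan geometries established in Section~\ref{subsection_diff_notions} it defines a holomorphic affine structure, whose charts are the developing maps of $s$ in the local system determined by the flat connection. The normalization $\Phi_x = \id$ is what guarantees that these charts are local biholomorphisms whose differentials agree with the tautological identification $\left(T_X\right)_x = \left(T_{\Aff_{X,x}}\right)_{s(x)}$, so that the forward and backward constructions are mutually inverse; in particular the affine structure attached to a given affine connection is unique.

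The main obstacle is the bookkeeping in the forward $1$-jet computation, keeping track of how the tautological identification $\left(T_X\right)_x = \left(T_{\Aff_{X,x}}\right)_{s(x)}$ interacts with the vertical projection defined by the connection, so as to pin down $\Phi_x$ exactly; once this is settled the converse is essentially formal, thanks to the vanishing of holomorphic $2$-forms. Finally, the affine-space statement is immediate: the preceding proposition identifies the affine connections on $X$ with an affine space directed by $H^0\left(X, K_X\right)$, and the equivalence just established transports this structure onto the set of holomorphic affine structures.
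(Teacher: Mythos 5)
Your argument is correct, and the forward direction is exactly the computation the paper has in mind: check in a chart that the connection glued from the flat sections $x \mapsto j^1_x z_i$ is transverse to the zero section and induces the identity on $\left(T_X\right)_x$, which is the affine analogue of the $1$-jet bookkeeping in proposition \ref{atiyah_1jet}. Where you genuinely diverge is in the converse. The paper's one-line proof points to an analogue of lemma \ref{derivee_schwarzienne} and corollary \ref{coro_equiv_conn_struct}: the difference of two affine connections is a holomorphic differential, any local holomorphic differential is realized as $\left(f''/f'\right)dz$ for some change of coordinate $f$ (an elementary ODE, $f = \int e^{\int h}$), so every affine connection locally agrees with one coming from a chart, and these local structures glue. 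You instead observe that a holomorphic connection over a curve is automatically flat (its curvature lives in $\Omega^2_X \otimes \ad$, which vanishes in complex dimension one) and then invoke the general equivalence between flat Cartan geometries and $(G,Q)$-structures from section \ref{subsection_diff_notions}, with the normalization $\Phi_x = \id$ ensuring the two constructions are mutually inverse --- and your identification of that normalization as precisely the condition making the $1$-jets of the developing maps flat is the right way to see the uniqueness. Your route is more conceptual and avoids solving any equation; the paper's computational route is the one that transports to the branched setting, where flatness is no longer the obstruction and one needs the explicit local analysis of $f''/f'$ and $S(f)$ near a branch point (proposition \ref{fuchs_locale}, lemma \ref{bialgébriques}). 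Both are complete proofs of the statement as given.
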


\subsection{The Principal Bundle of Projective $2$-Frames of a Riemann Surface}

From now on, the model space we consider is the Riemann sphere $Q = \CP^1$, and its group of symmetries is $G = \PSL(2, \C)$. The Riemann sphere $\CP^1$ has a preferred point $x_0 = 0 \in \CP^1$.
Let $H \subset G$ be the subgroup whose action on $\CP^1$ fixes $0$.
With these conventions, a bundle of  $(G, Q)$-spaces is a holomorphic $\CP^1$-bundle and a bundle of $(G, Q, x_0)$-spaces is a holomorphic $\CP^1$-bundle along with a holomorphic section.

Let also $L \subset H$ be the subgroup whose action on the tangent space of $\CP^1$ at $0$ is trivial.
Let $\mathfrak l \subset \mathfrak h \subset \mathfrak g$ be the respective Lie algebras of $L \subset H \subset G$.
The Lie subalgebra $\mathfrak h \subset \mathfrak g$ is not preserved by the adjoint action of $G$. 
However, the adjoint action of $H$ over $\mathfrak h$ preserves $\mathfrak l$.

The action of $G$ on $\CP^1$ induces an isomorphism of Lie algebras $\mathfrak g \simeq H^0(\CP^1, T_{\CP^1})$, where the Lie bracket on $H^0(\CP^1, T_{\CP^1})$ is given by the Lie bracket of vector fields. The Lie subalgebra $\mathfrak h$ is identified with the space of vector fields that vanish at $0 \in \CP^1$ and $\mathfrak l$ is identified with the space of vector fields whose $1$-jet vanishes at $0$.

If $x \in \CP^1$, denote by $(\CP^1, x)$ the germ of $\CP^1$ at $x$.
Similarly to what happens for affine transformations of $\C$, it is a classical fact (see for instance \cite{deligne_2}) that for any $x \in \CP^1$ and any $2$-jet $j$ at $x$ of local biholomorphism with values in $\CP^1$, there is exactly one Möbius transformation in $G$ whose $2$-jet at $x$ is $j$.
This justifies the following definition.
\begin{definition}
The \emph{bundle of projective $2$-frames} on $X$, denoted by $\pi : \mathcal P_X \twoheadrightarrow X$, has fiber over $x \in X$~:
\begin{equation}
\mathcal P_{X, x} =\left \{j^2_x \phi | \phi \text{ is the germ at $x$ of a local biholomorphism from $X$ to $\CP^1$} \right \}
\end{equation}
The holomorphic structure of $\mathcal P_X$ is such that if $\phi$ is a local holomorphic function on $X$, then $x \in X \mapsto j^2_x\phi \in \mathcal P_{X, x}$ is a holomorphic section of $\mathcal P_X$.
\end{definition}
The group $G$ acts on the left on $\mathcal P_X$ by postcomposition, and by the above discussion the action is free and transitive on the fibers, thus $\mathcal P_X$ is a $G$-principal bundle.

Following \eqref{eq_bundle_as_quotient_principal} and remark \ref{rk_quotient_principal_by_isotropies}, one can consider the $\CP^1$-bundle whose bundle of trivializations is $\mathcal P_X$~:
\begin{definition}
The \emph{projective osculating line} of the Riemann surface $X$ is the $\CP^1$-bundle over $X$, denoted by $\Pi : P_X \to X$ and defined by $P_X = G \backslash \left(\CP^1 \times \mathcal P_X \right) = H \backslash \mathcal P_X$.
\end{definition}

Having chosen $x_ = 0 \in \CP^1$ as  a preferred point, the principal bundle $\mathcal P_X$ is endowed with a subbundle $\mathcal S_X \subset \mathcal P_X$, that is a $H$-principal bundle, and whose fiber over a point $x \in X$ is defined in the following way~:
\begin{equation}
\mathcal S_{X, x} =\left \{j^2_x \phi | \phi \text{ germ at $x$ of local biholomorphism from $X$ to $\CP^1$}, \phi(x) = 0 \right \}
\end{equation}
In other words, $\mathcal S_X$ is the bundle of projective $2$-frames on $X$ that send the points of $X$ to $0 \in \CP^1$.
The subbundle $\mathcal S_X$ is the bundle of trivializations of a $(G, Q, x_0)$-bundle whose underlying $(G, Q)$-bundle is $P_X$.
In other words, $\mathcal S_X$  defines a holomorphic section $s_X: X \to P_X$, given by $s(X) = H \backslash \mathcal S_X \subset H \backslash \mathcal P_X = P_X$.

\subsection{The Structure of Filtered $\SO(3, \C)$-Bundle on $\ad(\mathcal P_X)$}

Recall the definition of a holomorphic filtered $\SO(3, \C)$-bundle on $X$.
\begin{definition}
A \emph{filtered $\SO(3, \C)$-bundle} on $X$ is a rank $3$ holomorphic vector bundle on $X$ endowed with
\begin{itemize}
\item[(i)] A nondegenerate bilinear form $B_x$ on each fiber $W_x$ of $W$, such that $B_x$ varies holomorphically with $x \in X$.
\item[(ii)] An identification $\bigwedge^3 W \simeq \mathcal O_X$ such that the bilinear form induced by $B$ on the fibers of $\bigwedge^3 W$ is the trivial one on $\mathcal O_X$
\item[(iii)] A filtration $F_1 \subset F_2 \subset W$ along with identifications $F_1 \simeq K_X$ and $F_2 / F_1 \simeq \mathcal O_X$ such that for all $x \in X$, $F_2$ is the orthogonal of $F_1$ for the nondegenerate bilinear form $B_x$.
\end{itemize}
\end{definition}
\begin{remark}
Condition $(iii)$ implies that $B_x$ induces a perfect pairing between $F_1$ and $W / F_2$, thus $W / F_2$ is identified with $T_X$.
\end{remark}

It is shown in \cite{biswas-dumitrescu} that the datum of a holomorphic $\CP^1$-bundle with a holomorphic section is equivalent to the datum of a holomorphic filtered $\SO(3, \C)$-bundle.
In this subsection we exhibit the filtered $\SO(3, \C)$-bundle associated to the projective osculating line $P_X$ along with its canonical section $s_X$, and we show that this particular $\SO(3, \C)$ carries slightly more structure.

The fiber over $x \in X$ of the adjoint bundle $\ad(\mathcal P_X)$ ($= \ad\left(P_X\right)$) is the space of holomorphic vector fields over the projective line $P_{X, x}$.
The adjoint bundle $\ad(\mathcal P_X)$ is a vector bundle of rank $3$ and it is endowed with a filtration $F_1^X \subset F_2^X \subset \ad(\mathcal P_X)$, defined as follows.
For any $x \in X$, take for $F^X_{2, x}$ (respectively for $F^X_{1, x}$)  the $2$-dimensional (resp. $1$-dimensional) vector space whose elements are vector fields on the projective line $P_{X, x}$ that vanish at $s(x) \in P_{X, x}$ (resp. whose $1$-jet vanishes at $s(x)$).
Note that $F_2^X = \ad(\mathcal S_X)$.

For $x \in X$, since $\mathcal P_X$ is the bundle of trivializations of $P_X$, the datum of an element $\overline \phi \in \mathcal P_{X, x}$ ($\phi$ is a local biholomorphism from a neighborhood of $x$ to $\CP^1$, and the bar denotes the class in $\mathcal P_{X, x}$) is the same as an identification $P_{X, x} \simeq \CP^1$, and induces an identification $\ad(\mathcal P_X) \simeq \mathfrak g$.
If moreover $\overline \phi \in \mathcal S_{X, x}$, then $\mathcal S_{X, x}$ is identified with $H$, $F_{2, x}^X$ with $\mathfrak h$ and $F_{1, x}^X$ with $\mathfrak l$.

For any $x \in X$, denote by $J^2(T_X)_x$ the $3$-dimensional vector space of $2$-jets at $x$ of local vector fields on $X$.
The holomorphic vector bundle $J^2(T_X)$ comes with a filtration $\widetilde F_1^X \subset \widetilde F_2^X \subset J^2(T_X)$, where $\widetilde F_{2, x}^X$ (respectively $\widetilde F_{1, x}^X$) contains the $2$-jets of vector fields vanishing (respectively with vanishing $1$-jet) at the point $x$.
The line bundle $\widetilde F_1^X$ is identified with $K_X^{\otimes 2} \otimes T_X = K_X$.
The line bundle $\widetilde F_2^X / \widetilde F_1^X$ is identified with $K_X \otimes T_X = \mathcal O_X$.
The line bundle $J^2(T_X)/\widetilde F_{2, x}$ is identified with $T_X$.

In the case of the model space $\CP^1$, since any $2$-jet of local vector field at a point $x \in \CP^1$ uniquely extends to a global vector field in $H^0(\CP^1, T_{\CP^1})$, there is a canonical identification $J^2(T_{\CP^1})_x \simeq H^0(\CP^1, T_{\CP^1})$.
Note that in this way $\widetilde F_{2, x}^{\CP^1}$ (respectively $\widetilde F_{1, x}^{\CP^1}$) is identified with $H^0\left(\CP^1, T_{\CP^1}\left(-[x]\right)\right)$ (respectively\break $H^0\left(\CP^1, T_{\CP^1}\left(-2[x]\right)\right)$).

\begin{proposition}\label{2jet}
There is a canonical identification $F_2^X \simeq \widetilde F_2^X$, that sends moreover $F_1^X$ to $\widetilde F_1^X$.
\end{proposition}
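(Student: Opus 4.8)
The plan is to build the identification fibrewise from an explicit pullback of vector fields, exploiting that $F_2^X = \ad(\mathcal S_X)$ is the adjoint bundle of the $H$-principal bundle $\mathcal S_X$, hence the associated bundle $\mathcal S_X \times_H \mathfrak h$. To define a morphism $F_2^X \to \widetilde F_2^X$ it therefore suffices to produce an $H$-equivariant map $\mathcal S_{X,x} \times \mathfrak h \to \widetilde F_{2,x}^X$ and check that it descends to the quotient by $H$. Recalling that $\mathfrak h$ is identified with the space of global vector fields on $\CP^1$ vanishing at $0$, I would send a pair $(\beta, \xi)$, where $\beta = j^2_x\phi \in \mathcal S_{X,x}$ (so $\phi$ is a germ of biholomorphism $(X,x) \to (\CP^1,0)$) and $\xi \in \mathfrak h$, to the $2$-jet $j^2_x(\phi^*\xi) \in \widetilde F_{2,x}^X$. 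Note that $\phi^*\xi$ is a germ of vector field on $X$ vanishing at $x$, so its $2$-jet indeed lies in $\widetilde F_{2,x}^X$.

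First I would verify that $j^2_x(\phi^*\xi)$ depends only on the $2$-jet $\beta = j^2_x\phi$, not on the chosen representative germ $\phi$. Writing $\xi = (a_1 z + a_2 z^2)\partial_z$ and $\phi(t) = c_1 t + c_2 t^2 + c_3 t^3 + \cdots$ in local coordinates, a direct computation of the pullback $\phi^*\xi$, whose coefficient is $(a_1\phi + a_2\phi^2)/\phi'$, shows that the coefficients of $t^0, t^1, t^2$ involve only $a_1, a_2, c_1, c_2$ and not $c_3$. This is the crucial point, and the one I expect to be the main obstacle to state cleanly: the vanishing $\xi(0)=0$ is exactly what kills the dependence on the third-order coefficient $c_3$, and is the infinitesimal manifestation of the order-$2$ osculation of $P_X$ against $X$. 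If one allowed $\xi(0)\neq 0$ the $2$-jet of the pullback would genuinely depend on $c_3$, which is precisely why one must restrict to $\mathfrak h \subset \mathfrak g$, i.e. to $F_2^X$ rather than all of $\ad(\mathcal P_X)$.

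Next I would check $H$-equivariance, which descends the map to $F_2^X = \mathcal S_X \times_H \mathfrak h$. The group $H$ acts on $\mathcal S_{X,x}$ by $\beta \mapsto h\cdot\beta = j^2_x(h\circ\phi)$ and on $\mathfrak h$ by $\Ad(h)$. Since pullback by the Möbius transformation $h$ is the inverse adjoint action, $h^*\xi = \Ad(h^{-1})\xi$, and pullback is contravariant, I get $j^2_x\big((h\circ\phi)^*\Ad(h)\xi\big) = j^2_x\big(\phi^* h^* \Ad(h)\xi\big) = j^2_x(\phi^*\xi)$, so the map is constant on the diagonal $H$-orbits and descends to a morphism $\Theta \colon F_2^X \to \widetilde F_2^X$. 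The jet formula of the previous step also shows $\Theta$ is injective, hence a fibrewise isomorphism between rank-$2$ bundles, and, being defined through $2$-jets of holomorphic data, it varies holomorphically with $x$. Finally, $F_1^X = \mathcal S_X \times_H \mathfrak l$ with $\mathfrak l$ the vector fields whose $1$-jet vanishes at $0$, i.e. $a_1 = 0$; the same formula then gives $j^2_x(\phi^*\xi) = a_2 c_1\, t^2 \partial_t + \cdots$, whose $1$-jet vanishes at $x$, so $\Theta(F_1^X) \subseteq \widetilde F_1^X$, with equality by a dimension count. This proves that $\Theta$ restricts to an isomorphism $F_1^X \simeq \widetilde F_1^X$ and completes the proof.
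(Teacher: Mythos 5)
Your proof is correct and follows essentially the same route as the paper's: the paper defines the pushforward $\delta\phi$ on $2$-jets of vector fields, observes via the same explicit coordinate computation that on jets vanishing at the base point the result depends only on $j^2_x\phi$, and uses $H$-equivariance ($\delta\overline\psi = h\cdot\delta\overline\phi$, $\Delta\overline\psi = h\cdot\Delta\overline\phi$) to show the composite $\delta\overline\phi^{-1}\circ\Delta\overline\phi$ is independent of the trivialization. Your $\Theta$ is exactly this map written in the pullback direction through the associated-bundle description $F_2^X = \mathcal S_X\times_H\mathfrak h$, and you correctly isolate the same key point, namely that $\xi(0)=0$ is what removes the dependence on the third-order coefficient of $\phi$.
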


\begin{proof}
Let $x \in X$ and $\overline \phi \in \mathcal S_{X, x}$. In particular $\phi(x) = 0$.
The local biholomorphism $\phi$ thus identifies a neighborhood of $x \in X$ with a neighborhood of $0 \in \CP^1$, and as a consequence it defines an isomorphism $\delta \phi : J^2(T_X)_x \simeq J^2(T_{\CP^1})_0$.
Let us explicitely write down this isomorphism.

Let $z$ be a local coordinate on $X$ centered at $x$, and let $\varphi$ be the holomorphic function on a neighborhood of $0 \in \C$ such that $\phi = \varphi(z)$.
The vector space $J^2(T_{\C})_0$ is identified with $\C^3$~: if $f$ is a function defined in a neighborhood of $0 \in \C$, then the $2$-jet $j^2_0f \in $ is given by $(f(0), f'(0), f''(0))$.
As a consequence, the coordinate $z$ identifies $J^2(T_X)_x$ with $\C^3$.
Using this identification, one has $\delta \phi(a_0, a_1, a_2)_z = (\varphi'a_0, \varphi''a_0 + \varphi'a_1, \varphi'''a_0 + 2\varphi''a_1 + \varphi'a_2)$.

In particular, if $(a_0, a_1, a_2) \in \widetilde F_{2, x}^X \subset J^2(T_X)_x$, i.e. $a_0 = 0$, then $\delta \phi(a_0, a_1, a_2) \in \widetilde F_{2, 0}^{\CP^1}$, and moreover $\delta \phi(a_0, a_1, a_2)$ only depends on $j^2_x \phi = \overline \phi$.
Thus $\overline \phi$ defines an isomorphism $\delta \overline \phi$ between $\widetilde F_{2, x}^X$ and $\widetilde F_{2, 0}^{\CP^1} = H^0\left(\CP^1, T_{\CP^1}\left(-[x]\right)\right) = \mathfrak h$.
Clearly, if $\psi \in \mathcal S_{X, x}$, then $\phi = h \cdot \phi$ with $h \in H$, and $\delta \overline \psi = h \cdot \delta \overline \phi$ where the action of $h$ on $\mathfrak h$ is the adjoint action.
Note finally that if $(a_0, a_1, a_2) \in \widetilde F_{1, x}^X \subset J^2(T_X)_x$, i.e. $a_0 = a_1 = 0$, then $\delta \overline \phi(a_0, a_1, a_2) \in \mathfrak l$.

On the other hand, we already noted that $\overline \phi$ gives an isomorphism $\Delta \overline \phi : F_{2, x}^X \simeq \mathfrak h$, that sends $F^X_{1, x}$ to $\mathfrak l$.
We also have $\Delta \overline \psi = h \cdot \Delta \overline \phi$.

Finally, $\overline \phi$ gives an isomorphism $\delta \overline \phi^{-1} \circ \Delta \overline \phi : F_{2, x}^X \simeq \widetilde F_{2, x}^X$.
We have $\delta \overline \psi^{-1} \circ \Delta \overline \psi = (h \cdot \delta \overline \phi^{-1}) \circ (h \cdot \Delta \overline \phi) = \delta \overline \phi^{-1} \circ \Delta \overline \phi$.
Thus the isomorphism $F_{2, x}^X \simeq \widetilde F_{2, x}^X$ does not depend on the choice of $\overline \phi$.
It sends $F_{1, x}^X$ to $\widetilde F_{1, x}^X$, since $\delta \overline \phi$ (resp. $\Delta \overline \phi$) sends $\widetilde F_{1, x}^X$ (resp. $F_{1, x}^X$) to $\mathfrak l$.
It is clear that the induced map $F_2^X \simeq \widetilde F_2^X$ is holomorphic.
\end{proof}

\begin{proposition}\label{1jet}
There is a canonical identification $\ad(\mathcal P_X) / F_1^X \simeq J^2(T_X) / \widetilde F_1^X = J^1(T_X)$, that sends moreover $F_2^X / F_1^X$ to $\widetilde F_2^X / \widetilde F_1^X$.
\end{proposition}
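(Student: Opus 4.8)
The plan is to mirror the proof of Proposition \ref{2jet}, but to carry it out for the full adjoint bundle $\ad(\mathcal P_X)$ rather than only its subbundle $F_2^X$, working modulo $F_1^X$ on the source and modulo $\widetilde F_1^X$ on the target. First I would fix $x \in X$ and a frame $\overline\phi \in \mathcal S_{X,x}$, so that $\phi$ is a germ of biholomorphism with $\phi(x) = 0$. As in Proposition \ref{2jet}, such a frame provides two identifications. On one hand, viewing $\overline\phi$ as a trivialization $P_{X,x} \simeq \CP^1$, one obtains an isomorphism $\Delta\overline\phi : \ad(\mathcal P_X)_x \xrightarrow{\sim} \mathfrak g$ carrying $F_{1,x}^X$ onto $\mathfrak l$ and $F_{2,x}^X$ onto $\mathfrak h$; it therefore descends to an isomorphism $\ad(\mathcal P_X)_x/F_{1,x}^X \xrightarrow{\sim} \mathfrak g/\mathfrak l$. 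On the other hand, the pushforward of $2$-jets $\delta\phi : J^2(T_X)_x \to J^2(T_{\CP^1})_0$, followed by the canonical extension $J^2(T_{\CP^1})_0 \simeq \mathfrak g$, carries $\widetilde F_{1,x}^X$ onto $\mathfrak l$ and descends to an isomorphism $\delta\overline\phi : J^1(T_X)_x = J^2(T_X)_x/\widetilde F_{1,x}^X \xrightarrow{\sim} \mathfrak g/\mathfrak l$. The desired fiberwise map is then the composite $(\delta\overline\phi)^{-1} \circ \Delta\overline\phi$.

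The key point, and the reason the statement is formulated modulo $F_1^X$, is that although the pushforward $\delta\phi$ depends a priori on $j^3_x\phi$, its reduction modulo $\widetilde F_1^X$ depends only on $\overline\phi = j^2_x\phi$. Indeed, in the coordinates of Proposition \ref{2jet} one has $\delta \phi(a_0, a_1, a_2) = (\varphi'a_0, \varphi''a_0 + \varphi'a_1, \varphi'''a_0 + 2\varphi''a_1 + \varphi'a_2)$, and the only appearance of the third derivative $\varphi'''$ is in the last coordinate, which is exactly the $\widetilde F_1$-component that we quotient out. Thus $\delta\overline\phi$ is a well-defined isomorphism depending holomorphically on $\overline\phi$; that it is an isomorphism follows from $\varphi'(0) \neq 0$.

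It then remains to check that the composite $(\delta\overline\phi)^{-1} \circ \Delta\overline\phi$ is independent of the chosen frame. For $\psi = h \cdot \phi$ with $h \in H$, one has $\Delta\overline\psi = h \cdot \Delta\overline\phi$ and $\delta\overline\psi = h \cdot \delta\overline\phi$ (adjoint action), exactly as in Proposition \ref{2jet}; here the reductions modulo $\mathfrak l$ make sense because $\Ad(h)$ preserves $\mathfrak l$ for $h \in H$. The two adjoint actions cancel in the composite, so $(\delta\overline\psi)^{-1}\circ\Delta\overline\psi = (\delta\overline\phi)^{-1}\circ\Delta\overline\phi$, and the locally defined isomorphisms glue to a canonical holomorphic isomorphism $\ad(\mathcal P_X)/F_1^X \simeq J^1(T_X)$. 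Finally, it sends $F_2^X/F_1^X$ onto $\widetilde F_2^X/\widetilde F_1^X$ because both $\Delta\overline\phi$ and $\delta\overline\phi$ carry these lines onto $\mathfrak h/\mathfrak l$ (for $\delta\phi$, note that $\delta\phi(0,a_1,a_2)$ has vanishing first coordinate, hence lies in $\widetilde F_2$).

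I expect the main obstacle to be precisely the well-definedness observation of the second paragraph: one must verify that passing to the quotient by $\widetilde F_1^X$ removes the dependence on the $3$-jet of $\phi$, so that the map is genuinely induced by the $2$-frame $\overline\phi$ alone (this is what fails for the full bundle $J^2(T_X)$, and explains why the statement is confined to the quotient). Everything else is a routine transcription of Proposition \ref{2jet}, with $\mathfrak g/\mathfrak l$ in the role played there by $\mathfrak h$ and $\mathfrak l$.
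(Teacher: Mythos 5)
Your proof is correct and follows essentially the same route as the paper, which simply says the argument is "similar to Proposition \ref{2jet}" and transfers the two identifications with $\mathfrak g/\mathfrak l$ via a frame $\overline\phi \in \mathcal S_{X,x}$. Your second paragraph usefully makes explicit the one point the paper leaves implicit — that $\varphi'''$ only enters the coordinate killed by the quotient by $\widetilde F_1^X$, so $\delta\overline\phi$ is well defined on $J^1(T_X)_x$ — but this is a detail of the same argument, not a different approach.
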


\begin{proof}
The proof is similar to proposition \ref{2jet}.
On the one hand, given $x \in X$ and $\overline \phi \in \mathcal S_{X, x}$, $\overline \phi$ identifies $J^1(T_X)_x$ to $J^1(T_{\CP^1})_0 = \mathfrak g / \mathfrak l$.
On the other hand, $\overline \phi$ identifies $\ad(\mathcal P_X)_x / F_{1, x}^X$ to $\mathfrak g / \mathfrak l$. 
Thus it defines an isomorphism $J^1(T_X)_x \simeq \ad(\mathcal P_X)_x / F_{1, x}^X$, that happens to not depend on the choice of $\overline \phi$, and to send $F_{2, x}^X / F_{1, x}^X$ to $\widetilde F_{2, x}^X / \widetilde F_{1, x}^X$.
Finally, the induced map from $\ad(\mathcal P_X) / F_1^X$ to $J^1(T_X)$ is holomorphic.
\end{proof}

\begin{remark}\label{intrinseque}
Let $Y$ be another Riemann surface and $f : X \simeq Y$ a biholomorphism.
The map $f$ induces identifications $\iota_1 : \ad(\mathcal S_X) \simeq f^* \ad(\mathcal S_Y)$, $\iota_2 : \ad(\mathcal P_X)/F_1^X \simeq f^* \ad(\mathcal P_Y)/F_1^Y$, $\iota_3 : J^1(T_X) \simeq f^*J^1(T_Y)$ and $\iota_4 : \widetilde F_2^X \simeq f^*\widetilde F_2^Y$.
From the proofs of propositions \ref{2jet} and \ref{1jet}, it appears that the following diagram commutes, where the horizontal arrows are given by the isomorphisms in propositions \ref{2jet} and \ref{1jet}~:

\begin{multicols}{2}
\begin{equation}\label{intrinseque_2jet}
\begin{tikzcd}
\ad(\mathcal S_X) \arrow[r] \arrow[d, "\iota_1"] &\widetilde F_2^X \arrow[d, "\iota_4"] \\ f^*\ad(\mathcal S_Y) \arrow[r] &  f^*\widetilde F_2^Y
\end{tikzcd}
\end{equation}

\begin{equation}\label{intrinseque_1jet}
\begin{tikzcd}
\ad(\mathcal P_X)/F_1^X \arrow[r] \arrow[d, "\iota_2"] & J^1(T_X) \arrow[d, "\iota_3"] \\ f^* \ad(\mathcal P_Y)/F_1^Y \arrow[r] &  f^*J^1(T_Y)
\end{tikzcd}
\end{equation}
\end{multicols}
\end{remark}

We have, as a consequence of propositions \ref{2jet} and \ref{1jet}~:

\begin{corollary}\label{tangent}
There are canonical identifications~: 
\begin{itemize} 
\item[(i)] $\ad(\mathcal P_X)/F_2^X \simeq T_X$ 
\item[(ii)] $F_2^X / F_1^X \simeq \mathcal O_X$ 
\item[(iii)] $F_1^X \simeq K_X$ 
\end{itemize}
\end{corollary}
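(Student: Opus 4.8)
The plan is to obtain all three identifications purely formally, by transporting the already-computed graded pieces of the jet filtration $\widetilde F_1^X \subset \widetilde F_2^X \subset J^2(T_X)$ along the canonical isomorphisms of Propositions \ref{2jet} and \ref{1jet}. Recall from the discussion preceding those propositions that the jet filtration has graded pieces $\widetilde F_1^X \simeq K_X^{\otimes 2}\otimes T_X = K_X$, then $\widetilde F_2^X / \widetilde F_1^X \simeq K_X \otimes T_X = \mathcal O_X$, and finally $J^2(T_X)/\widetilde F_2^X \simeq T_X$. These are the only inputs to the argument.

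For $(iii)$, Proposition \ref{2jet} provides a canonical isomorphism $F_2^X \xrightarrow{\sim} \widetilde F_2^X$ carrying $F_1^X$ onto $\widetilde F_1^X$; restricting it to $F_1^X$ gives $F_1^X \simeq \widetilde F_1^X \simeq K_X$. For $(ii)$, the same isomorphism descends to the quotient (precisely because it carries $F_1^X$ onto $\widetilde F_1^X$), yielding $F_2^X / F_1^X \simeq \widetilde F_2^X / \widetilde F_1^X \simeq \mathcal O_X$.

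For $(i)$, I would start from Proposition \ref{1jet}, which supplies a canonical isomorphism $\ad(\mathcal P_X)/F_1^X \xrightarrow{\sim} J^2(T_X)/\widetilde F_1^X$ carrying the subbundle $F_2^X / F_1^X$ onto $\widetilde F_2^X / \widetilde F_1^X$. Passing to the quotient by these subbundles and invoking the third isomorphism theorem for vector bundles gives
\[
\ad(\mathcal P_X)/F_2^X \simeq \bigl(\ad(\mathcal P_X)/F_1^X\bigr)/\bigl(F_2^X/F_1^X\bigr) \simeq \bigl(J^2(T_X)/\widetilde F_1^X\bigr)/\bigl(\widetilde F_2^X/\widetilde F_1^X\bigr) = J^2(T_X)/\widetilde F_2^X \simeq T_X.
\]

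I do not expect any genuine obstacle here; the mathematical content is entirely contained in the two preceding propositions, and the corollary is a formal consequence. The only points requiring a line of verification are that the isomorphism of Proposition \ref{2jet} indeed restricts to $F_1^X$ and descends to $F_2^X / F_1^X$, which is immediate since it sends $F_1^X$ to $\widetilde F_1^X$, and that the isomorphism of Proposition \ref{1jet} descends modulo $F_2^X / F_1^X$. Holomorphicity and canonicity of all three identifications are inherited directly from Propositions \ref{2jet} and \ref{1jet}.
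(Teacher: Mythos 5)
Your proposal is correct and matches the paper's intent exactly: the paper states Corollary \ref{tangent} as an immediate consequence of Propositions \ref{2jet} and \ref{1jet} together with the identifications of the graded pieces of the jet filtration ($\widetilde F_1^X \simeq K_X$, $\widetilde F_2^X/\widetilde F_1^X \simeq \mathcal O_X$, $J^2(T_X)/\widetilde F_2^X \simeq T_X$), and your argument is precisely the formal transport of those graded pieces, spelled out. No gaps.
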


This corollary, along with the remark that, for any $x \in X$, $F_{2, x}^X$ is the orthogonal of $F_{1, x}^X$ for the killing form of the Lie algebra $\ad(\mathcal P_X)_x$, shows that $\ad(\mathcal P_X)$ is canonically endowed with a structure of a filtered $\SO(3, \C)$-bundle.

\subsection{Projective Connections}

In this subsection we introduce a special class of Cartan connections on $\mathcal P_X$ along with its reduction $\mathcal S_X$, namely \emph{projective connections}.
To do that we use propositions \ref{2jet} and \ref{1jet}, as well as the following proposition about the Atiyah bundle $\At(\mathcal S_X)$ of $\mathcal S_X$.

\begin{proposition}\label{atiyah_1jet}
There is an isomorphism $\Phi_X : \At(\mathcal S_X)/F_1^X \xrightarrow{\sim} \ad(\mathcal P_X)/F_1^X = J^1 (T_X)$ such that the following diagram commutes~: 
\begin{equation}\label{iso_atiyah}
\begin{tikzcd} \At(\mathcal S_X)/F_1^X \arrow[d, twoheadrightarrow, "\widetilde \pi"] \arrow[r, "\Phi_X"] &\ad(\mathcal P_X)/F_1^X = j^1(T_X) \arrow[d, twoheadrightarrow]\\ T_X \arrow[r, equal] & T_X \end{tikzcd}
\end{equation}
\end{proposition}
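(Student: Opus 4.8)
The plan is to imitate closely the structure of the proofs of propositions \ref{2jet} and \ref{1jet}: work fiberwise over a point $x \in X$, use a frame $\overline\phi \in \mathcal S_{X,x}$ to identify both sides with a fixed model computed on $\CP^1$, check that the resulting map is independent of the chosen frame by an $H$-equivariance ($\Ad(h)$) argument, and finally verify holomorphy and the commutativity of \eqref{iso_atiyah}. The conceptual shortcut at the end is that both $\At(\mathcal S_X)/F_1^X$ and $\ad(\mathcal P_X)/F_1^X = J^1(T_X)$ sit, via corollary \ref{tangent}(ii) and proposition \ref{1jet}, in short exact sequences of the form $0 \to \mathcal O_X \to \bullet \to T_X \to 0$ (recall $F_2^X = \ad(\mathcal S_X)$). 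Hence, once I have produced any morphism $\Phi_X$ covering $\id_{T_X}$ and inducing a nonzero map on the $\mathcal O_X$-subsheaves, the five lemma forces $\Phi_X$ to be an isomorphism; so the real content is to construct one such morphism canonically.

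The two fiberwise identifications are as follows. For the target, $\overline\phi$ is exactly the datum used in proposition \ref{1jet}, giving $\ad(\mathcal P_X)_x/F_{1,x}^X \simeq \mathfrak g/\mathfrak l = J^1(T_{\CP^1})_0 \simeq J^1(T_X)_x$. For the source, a germ $\phi$ representing $\overline\phi$ is a local biholomorphism sending a neighborhood of $x$ to a neighborhood of $0 \in \CP^1$, so by the functoriality of the bundle of projective $2$-frames (as in remark \ref{intrinseque}) it induces an isomorphism of bundles $\mathcal S_X \simeq \mathcal S_{\CP^1}$ near $x$ and $0$, hence an isomorphism $\At(\mathcal S_X)_x \simeq \At(\mathcal S_{\CP^1})_0$. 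On the model, the standard $(G,\CP^1)$-structure is a flat Cartan geometry, so the Cartan connection isomorphism of subsection \ref{subsection_cartan_connections} (the Maurer--Cartan form) identifies $\At(\mathcal S_{\CP^1})_0 \simeq \ad(\mathcal P_{\CP^1})_0 = \mathfrak g$ in a $G$-equivariant, hence $H$-equivariant, way. Quotienting by $\mathfrak l$ and composing with the inverse of the first identification produces $\Phi_{X,x} : \At(\mathcal S_X)_x/F_{1,x}^X \xrightarrow{\sim} \ad(\mathcal P_X)_x/F_{1,x}^X$. Both identifications are by construction compatible with the value map to $T_X \simeq \mathfrak g/\mathfrak h$, which yields the commuting square \eqref{iso_atiyah}.

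The main obstacle — and essentially the only nontrivial point — is the well-definedness modulo $F_1^X$ together with the frame-independence. The transport $\At(\mathcal S_X)_x \simeq \At(\mathcal S_{\CP^1})_0$ a priori depends on the full germ $\phi$, not merely on its $2$-jet $\overline\phi$, because lifting base directions into $\mathcal S_X$ differentiates the family of $2$-frames and therefore sees higher-order Taylor data of $\phi$. The key computation, entirely parallel to the explicit formula for $\delta\phi$ in the proof of proposition \ref{2jet}, is to show that modifying $\phi$ while keeping $j^2_x\phi$ fixed alters the identification only by elements of $\mathfrak l = F_{1,x}^X$, so that $\Phi_{X,x}$ descends to the quotient by $F_1^X$; and that replacing $\overline\phi$ by $h\cdot\overline\phi$ with $h \in H$ changes both identifications by $\Ad(h)$ on $\mathfrak g/\mathfrak l$, leaving the composite unchanged. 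Concretely I would carry this out in a local coordinate $z$ centered at $x$ and $w$ centered at $0$, writing a tangent vector to $\mathcal S_X$ in the jet-coordinates $(t, b_1, b_2)$ of the fiber and tracking precisely where the third-order / $b_2$-terms land; this is exactly the reason the statement is phrased after quotienting by $F_1^X$, since on $\At(\mathcal S_X)$ itself no such canonical isomorphism exists without a connection. Once this modular-$F_1$ well-definedness and equivariance are established, holomorphy follows from the explicit coordinate expression and the isomorphism property follows from the five-lemma remark above.
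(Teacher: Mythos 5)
Your proposal is correct and follows essentially the same route as the paper: in both cases $\Phi_X$ is, modulo $F_1^X$, the splitting obtained by pulling back the tautological flat Cartan connection of $\CP^1$ through a local chart, the key point being that changing the chart (or the germ extending a fixed $2$-frame) moves this splitting only by $F_1^X$-valued terms --- exactly the computation of lemma \ref{derivee_schwarzienne} --- after which the extension-of-$T_X$-by-$\ad(\mathcal S_X)/F_1^X$ (five lemma) argument gives the isomorphism. The only difference is organizational: the paper defines $\Phi_X^{z}$ over coordinate neighbourhoods and glues, while you define it fiberwise à la propositions \ref{2jet} and \ref{1jet} and invoke $H$-equivariance; note that the commutativity of \eqref{iso_atiyah}, which you dismiss as ``by construction'', still requires the short verification (done in the paper via $\mu_{z_1}\circ s_X = z_1$) that the tautological connection on $\CP^1$ induces the identity on $T_{\CP^1}$.
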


\begin{proof}
Let $x \in X$ and $z_1$ be a local coordinate on a neighborhood $U \subset X$ of $x$.
The coordinate $z_1$ defines a local section $\gamma_{z_1} = j^2z_1$ of the principal bundle $\mathcal P_X$, and thus trivializations of $\mathcal P_X|_U$ and $P_X|_U$.
In particular $\gamma_{z_1}$ gives a principal connection on $\mathcal P_X|_U$, i.e. a local morphism $\phi_{z_1} : \At(\mathcal P_X) \to \ad(\mathcal P_X)$ that is the identity on $\ad(\mathcal P_X) \subset \At(\mathcal P_X)$.
Since $F_1^X \subset \ad(\mathcal P_X)$, the principal connection $\phi_{z_1}$ induces a local morphism $\At(\mathcal P_X)/F_1^X \to \ad(\mathcal P_X)/F_1^X$, whose restriction to $\At(\mathcal S_X) \subset \At(\mathcal P_X)$ we write $\Phi_X^{z_1}$.

Let us show that diagram \ref{iso_atiyah} commutes when $\Phi_X$ is replaced by $\Phi_X^{z_1}$.
Let $\mu_{z_1} : P_X|_U \to \CP^1$ be the trivialization of $P_X$ defined by $\gamma_{z_1}$ : if $\Pi : P_X \to X$ is the obvious projection map, then $(\Pi, \mu|_{z_1})$ identifies $P_X|_U$ to $U \times \CP^1$.
Then it is not hard to see that the expression $\mu_{z_1} \circ s_X : U \to \CP^1$ of section $s$ in the trivialization $\gamma_{z_1}$ is given by $\mu_{z_1} \circ s_X = z_1$.
Let $V \in \Gamma(U, \At(\mathcal S_X))$ and $[V]$ its class modulo $F_1^X$.
On the one hand we have $dz_1 \circ \varpi \circ \Phi_X^{z_1}([V]) = d\mu_{z_1}(V(s)) \in \Gamma(U, z_1^*T_{\CP^1})$, where we see $V$ as a vector field on $P_X|_U$, and $\varpi$ is the projection $J^1(T_X) \to T_X$.
On the other hand, for any $y \in U$ and any $p \in P_{X, y}$, $\widetilde \pi([V])_y = d\Pi(V(p))$ where in the second term $V$ is seen as a vector field on $P_X|_U$.
In particular, choosing $p = s(y)$, we have $\widetilde \pi([V]) = d\Pi(V(s))$.
Since $V \in \Gamma(U, \At(\mathcal S_X))$, $V$ is tangent to $s$, i.e. $V(s) = ds \circ d\Pi(V(s))$.
Thus $d\mu_{z_1}(V(s)) = d\mu_{z_1} \circ ds \circ d\Pi(V(s))$ and by using $\mu_{z_1} \circ s_X = z_1$~: $d\mu_{z_1}(V(s)) = dz_1 \circ d\Pi(V)$.
Finally, since $dz_1$ is an isomorphism, $\varpi \circ \Phi_X^{z_1} = d\Pi$.

It follows that $\Phi_X^{z_1}$ is an isomorphism.
Indeed, it is the identity on $\ad(\mathcal S_X) / F_1^X$ and $T_X$, and both $\At(\mathcal S_X)/F_1^X$ and $\ad(\mathcal P_X)/F_1^X = j^1(T_X)$ are extensions of $T_X$ by $\ad(\mathcal S_X) / F_1^X$.

It remains to prove that if $z_2$ is another local coordinate on $U$, then $\Phi_X^{z_2} = \Phi_X^{z_1}$.
If it is the case, then the $\Phi_X^z$ for all local coordinates on $X$ glue together in a global isomorphism $\Phi_X$.
Write $\gamma_{z_2} = g \cdot \gamma_{z_1}$, where $g : U \to G$ is a holomorphic function.
It is a consequence of a computation in the proof of lemma \ref{derivee_schwarzienne} that for any $y \in U$ and any $V \in T_yX$, the vector field $dg_y(V) \in H^0(\CP^1, T_{\CP^1}) = \mathfrak g$ has vanishing $1$-jet at $z_1(y) \in \CP^1$.
As a consequence, $d\mu_{z_2} - g \cdot d\mu_{z_1}$ has values in $d\lambda^{-1} F_1^X$, where $\lambda : \mathcal P_X \to \At(\mathcal P_X)$ is the quotient map.
This shows that $\phi_{z_2} - \phi_{z_1}$ has values in $F_1^X$, and thus $\Phi_X^{z_2} = \Phi_X^{z_1}$.
\end{proof}

Take a Cartan connection for the bundles $\mathcal S_X \subset \mathcal P_X$, that we see as a splitting $\omega : \At(\mathcal P_X) \to \ad(\mathcal P_X)$ of the short exact sequence $0 \to \ad(\mathcal P_X) \to \At(\mathcal P_X) \to T_X \to 0$ such that the induced morphism $\omega|_{\At(\mathcal S_X)} : \At(\mathcal S_X) \to \ad(\mathcal P_X)$ is an isomorphism.
The lines of the following diagram are exact~:

\begin{equation}\label{diag_connexion_proj}
\begin{tikzcd} 0 \arrow[r] &F_1^X \arrow[d, equal] \arrow[r] \arrow[dr, phantom, "\circlearrowleft"] & \At(\mathcal S_X) \arrow[d, "\omega|_{F_2^X}"] \arrow[r] & \At(\mathcal S_X)/F_1^X \arrow[d, "\Phi_X"] \arrow[r] & 0 \\ 0 \arrow[r] & F_1^X \arrow [r] & \ad(\mathcal P_X) \arrow[r] & \ad(\mathcal P_X) / F_1^X = J^1(T_X) \arrow[r]& 0 \end{tikzcd}
\end{equation}

The left square of \eqref{diag_connexion_proj} commutes for any Cartan connection $\omega$.
This is however not the case for the right corner.

\begin{definition}
A \emph{projective connection} on $X$ is a Cartan connection $\omega$ for the principal bundles $\mathcal S_X \subset \mathcal P_X$, such that diagram \eqref{diag_connexion_proj} commutes.
\end{definition}

\begin{remark}
The vector bundle $\ad(\mathcal P_X)$, along with the filtration $F_1^X \subset F_2^X \subset \ad(\mathcal P_X)$ and a projective connection, is a \emph{$\SO(3, \mathbb C)$-oper}, in the sense of \cite{biswas-dumitrescu}.
\end{remark}
\begin{remark}
The datum of a Cartan connection on $\mathcal P_X$ is equivalent to the datum of a connection on the projective osculating line $P_X$, thus a foliation of $P_X$ transverse to the fibers.
The projective connections give rise to a special class of such foliations~: two foliations associated to projective connections are tangent to order $2$ at the point $s_X(x) \in P_X$, for any $x \in X$.
\end{remark}

Take $\omega_1, \omega_2$ two projective connections on $X$, $x \in X$ and $v \in \At(\mathcal P_X)_x$. There exists $v_1 \in \ad(\mathcal P_X)_x$ and $v_2 \in \At(\mathcal S_X)_x$ such that $v = v_1 + v_2$ ($v_1$ and $v_2$ are not unique).
One has ($i = 1, 2$)~: $\omega_i(v_1) = v_1$ and $[\omega_i(v_2)] = \Phi_X([v_2]) \in J^1(T_X)_x$, where the brackets stand for the class modulo $F_1^X$.
As a consequence, $(\omega_1 - \omega_2)_x(v) \in F_{1, x}$.
Moreover, since $\omega_i$ splits the exact sequence $0 \to \ad(\mathcal P_X) \to \At(\mathcal P_X) \to T_X \to 0$, it is also given by a morphism $\widetilde \omega_i : T_X \to \At(\mathcal P_X)$.
For any $w \in T_{X,x}$, $(\widetilde \omega_{2, x} - \widetilde \omega_{1, x})(w) = (\omega_1 - \omega_2)(v) \in F_{1, x}$ (where $v \in \At(\mathcal P_X)_x$ such that $\widetilde \pi(v) = w$), so $\widetilde \omega_2 - \widetilde \omega_1$ defines a section of $\Hom\left(T_X, F_1^X\right)$, i.e. of $K_X^{\otimes 2}$ since $F_1^X = K_X$ by corollary \ref{tangent}.
Reciprocally, if $\widetilde \omega : T_X \to \At(\mathcal P_X)$ defines a projective connection and $\varphi \in H^0\left(X, K_X^{\otimes 2}\right)$, then $\widetilde \omega + \phi$ defines a projective connection as well.
Thus one has the following proposition~:

\begin{proposition}\label{structure_affine}
The set of projective connections on $X$ is an affine space directed by $H^0\left(X, K_X^{\otimes 2}\right)$.
\end{proposition}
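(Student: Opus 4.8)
The plan is to exhibit the set of projective connections as a torsor under $H^0\left(X, K_X^{\otimes 2}\right)$, which requires three things: that the difference of any two projective connections is a global holomorphic quadratic differential, that adding such a differential to a projective connection yields another projective connection, and that the set is nonempty so that the torsor is genuinely an affine space. The first two points are essentially carried out in the discussion preceding the statement, so the work is to organize them and to be careful that the projective (and not merely Cartan) nature is preserved.

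For the difference, I would take two projective connections $\omega_1, \omega_2$, viewed as splittings of $0 \to \ad(\mathcal P_X) \to \At(\mathcal P_X) \to T_X \to 0$, and argue pointwise. Writing $v \in \At(\mathcal P_X)_x$ as $v_1 + v_2$ with $v_1 \in \ad(\mathcal P_X)_x$ and $v_2 \in \At(\mathcal S_X)_x$, both $\omega_i$ act as the identity on $v_1$ (being splittings) and, modulo $F_1^X$, send $v_2$ to $\Phi_X([v_2])$ by commutativity of the right square of \eqref{diag_connexion_proj}; hence $(\omega_1 - \omega_2)(v) \in F_{1,x}^X$. Passing to the associated splitting morphisms $\widetilde \omega_i : T_X \to \At(\mathcal P_X)$ via $\widetilde \omega_i(\pi(v)) = v - \omega_i(v)$, the difference $\widetilde \omega_2 - \widetilde \omega_1$ factors as a bundle map $T_X \to F_1^X$. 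Since $F_1^X \simeq K_X$ by Corollary \ref{tangent}, this is a section of $\Hom\left(T_X, K_X\right) = K_X^{\otimes 2}$, i.e. a global holomorphic quadratic differential.

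Conversely, given a projective connection $\omega$ with splitting $\widetilde \omega$ and $\varphi \in H^0\left(X, K_X^{\otimes 2}\right) = \Hom\left(T_X, F_1^X\right)$, I would set $\widetilde \omega' = \widetilde \omega + \varphi$ and check it is again a projective connection. The key observation is that $\varphi$ takes values in $F_1^X \subset \ad(\mathcal P_X)$, so $\widetilde \omega'$ is still a splitting and the associated projection satisfies $\omega' = \omega - \varphi \circ \pi$, differing from $\omega$ only by a term valued in $F_1^X$ that vanishes on $\ad(\mathcal P_X)$. Consequently $\omega'$ is still the identity on $\ad(\mathcal P_X)$, it induces the same map $\Phi_X$ on $\At(\mathcal S_X)/F_1^X$ (so diagram \eqref{diag_connexion_proj} still commutes), and since $\omega'$ is the identity on $F_1^X$ and an isomorphism on the quotients, the five lemma (exactly as in Proposition \ref{atiyah_1jet}) shows $\omega'|_{\At(\mathcal S_X)}$ remains an isomorphism. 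This yields a free and transitive action of $H^0\left(X, K_X^{\otimes 2}\right)$ on the set of projective connections.

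The step I expect to require the most care is precisely this last verification: that $\widetilde \omega + \varphi$ satisfies \emph{both} defining conditions of a projective connection — the Cartan isomorphism condition and the commutativity of \eqref{diag_connexion_proj} — and not merely the condition of being a Cartan connection. This is where it is essential that $\varphi$ lands in $F_1^X$ rather than in the larger bundle $\ad(\mathcal P_X)$. The only remaining point is nonemptiness of the set of projective connections, which I would obtain from the existence of at least one such connection (for instance the one attached to the uniformizing projective structure on $X$, or by gluing the local connections determined by the coordinate sections $\gamma_{z}$ of Proposition \ref{atiyah_1jet}, the gluing obstruction lying in $H^1\left(X, K_X^{\otimes 2}\right)$). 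Granting this, the torsor structure established above makes the set of projective connections an affine space directed by $H^0\left(X, K_X^{\otimes 2}\right)$.
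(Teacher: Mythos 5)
Your proposal is correct and follows essentially the same route as the paper: decompose $v = v_1 + v_2$ with $v_1 \in \ad(\mathcal P_X)$ and $v_2 \in \At(\mathcal S_X)$, use the commutativity of \eqref{diag_connexion_proj} to conclude that the difference of two projective connections lands in $F_1^X \simeq K_X$, hence defines a section of $K_X^{\otimes 2}$, and check conversely that adding $\varphi \in H^0\left(X, K_X^{\otimes 2}\right)$ preserves both defining conditions. You are in fact slightly more careful than the paper on the converse verification and on nonemptiness (which the paper defers to the equivalence with projective structures and uniformization), but the substance is the same.
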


\subsection{Projective Connections and Projective Structures}\label{subsection_connexions_projectives_structures_projectives}

In this section, we show that the datum of a projective connection on $X$ is the same as the datum of a \emph{projective structure} on $X$, i.e. a $(\PGL(2, \C), \CP^1)$-structure on $X$.
Let us recall the definition in this precise case.

\begin{definition}
A \emph{projective atlas} on $X$ is the datum of an open cover $(U_i)_{i \in I}$ and local charts $z_i : U_i \to \CP^1$ such that for any $i, j \in I$, there exists $g_{ij} \in \PGL(2, \mathbb C)$ such that $z_i = g_{ij} \circ z_j$ on $U_i \cap U_j$.

Two projective atlases are said to be equivalent if their union is a projective atlas.
A \emph{projective structure} on $X$ is an equivalence class of projective atlases.
\end{definition}

Take a projective structure of atlas $(U_i, z_i)_{i \in I}$.
Each $z_i$ defines a principal connection on $\mathcal P_X$, thus a splitting $\omega_i$ over $U_i$ of the exact sequence $0 \to \ad(\mathcal P_X) \to \At(\mathcal P_X) \to T_X \to 0$ ~: the section $x \in U_i \mapsto j^2_x z_i \in \mathcal P_{X, x}$ is flat.
Since $z_i = g_{ij} \circ z_i$, where $g_{ij}$ is a constant in $G$, the induced connections, thus the $\omega_i$'s, coincide on the intersections $U_i \cap U_j$, and thus define a global splitting $\omega$.
The proof of proposition \ref{atiyah_1jet} shows that $\omega$ is a projective connection.
Thus a projective structure on $X$ defines a projective connection on $X$.

Let $f : U \subset \C \to \C$ be a local biholomorphism.
The \emph{Schwarzian derivative} of $f$, denoted $S(f)$, is the function on $U$ defined as follows~: 

\begin{equation}  
S(f) = \left (\frac{f''}{f'} \right) - \frac 12 \left(\frac{f''}{f'}\right)^2 
\end{equation}

One has $S(f) = 0$ if and only if $f$ is an element of $PGL(2, \mathbb C)$.
If $f_1, f_2$ are two functions defined on open sets of $\C$ such that $f_2 \circ f_1$ has nonempty domain, one has the following formula~:

\begin{equation}\label{der_schw_composee}
S(f_2 \circ f_1) = \left (S(f_2) \circ f_1 \right) (f_1')^2 + S(f_1)
\end{equation}

Let $z_1, z_2$ be holomorphic coordinates on an open subset $U \subset X$.
We denote by $\{z_2, z_1\}$ the holomorphic function on $U$ defined by $\{z_2, z_1\} =S(f)$, where $f$ is the function on $z_1(U) \subset \mathbb C$ such that $z_2 = f(z_1)$.
Formula \eqref{der_schw_composee} means that if $z_3$ is another local coordinate on $U$, then one has $\{z_3, z_1\}dz_1^{\otimes 2} = \{z_3, z_2\}dz_2^{\otimes 2} + \{z_2, z_1\}dz_1^{\otimes 2}$.
For details on the properties of the schwarzian derivative, see \cite{hubbard}.

The two local coordinates $z_1, z_2$ are charts of two projective structures $p_1, p_2$ on $U$.
The projective structures $p_1$ and $p_2$ in turn define two projective connections $\widetilde \omega_1, \widetilde \omega_2$ on $U$.
According to \ref{structure_affine}, the difference $\widetilde \omega_2 - \widetilde \omega_1$ is given by a quadratic differential on $U$~: $\widetilde \omega_2 - \widetilde \omega_1 \in H^0\left(U, K_X|_U^{\otimes 2}\right)$.

\begin{proposition}\label{lien_derivee_schwarzienne}
Using the previous notations, one has $\widetilde \omega_2 - \widetilde \omega_1 = \{z_2, z_1\}dz_1^{\otimes 2}$.
\end{proposition}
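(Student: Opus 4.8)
The plan is to express both projective connections through the flat sections that define them, reduce their difference to a logarithmic derivative, and then compute that logarithmic derivative explicitly in terms of the Schwarzian.

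First I would recall that $\widetilde\omega_i$ is the splitting of the Atiyah sequence for which the local section $\gamma_{z_i} = j^2 z_i$ of $\mathcal P_X$ is flat, and write the transition between the two flat sections as $\gamma_{z_2} = g\cdot\gamma_{z_1}$ for a holomorphic map $g : U \to G$. Working in the trivialization of $\mathcal P_X|_U$ determined by $\gamma_{z_1}$ — so that $\omega_1$ has the trivial horizontal distribution — I would compute the $\omega_2$-horizontal lift of a tangent vector $V$ at the point $\gamma_{z_1}(y)$ by transporting the lift at $\gamma_{z_2}(y)$ back along the left $G$-action by $g(y)^{-1}$. Using the product rule for the left action of $G$ on $\mathcal P_X$, this shows that the difference of the two horizontal lifts is the vertical vector given by the infinitesimal action of $(g^{-1}dg)(V)\in\mathfrak g$; in other words, in the frame $\gamma_{z_1}$ the difference $\widetilde\omega_2-\widetilde\omega_1$ is represented by the logarithmic derivative $g^{-1}dg$, viewed through $\mathfrak g\simeq H^0(\CP^1,T_{\CP^1})$ as a form with values in vector fields on the fibre $P_{X,y}\simeq\CP^1$ (the frame sending $s_X(y)$ to $\zeta = w := z_1(y)$).

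Next I would identify $g(y)$ concretely: since $j^2_y z_2 = g(y)\cdot j^2_y z_1$, the Möbius transformation $g(y)$ is the unique one osculating $f := z_2\circ z_1^{-1}$ to order $2$ at $w$. Setting $F := g(w)^{-1}\circ f$, this normalization gives $F(\zeta) = \zeta + \tfrac16 S(f)(w)(\zeta-w)^3 + O\big((\zeta-w)^4\big)$, where $S$ is the Schwarzian. Differentiating in $w$ the three osculation conditions relating the family $M_\epsilon := g(w)^{-1}g(w+\epsilon)$ and $F$ at the moving point $w+\epsilon$, I would read off that the vector field $(g^{-1}dg)(\partial_{z_1})$ vanishes to order $2$ at $\zeta = w$ — which reproves that the difference lands in $F_1^X = \mathfrak l$, as used in the proof of Proposition \ref{atiyah_1jet} — and that its second-order coefficient is a fixed multiple of $S(f)(w)$, so that $(g^{-1}dg)(\partial_{z_1}) = \tfrac12 S(f)(w)(\zeta-w)^2\partial_\zeta$.

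Finally I would push this through the canonical identifications. Under $F_1^X\simeq\widetilde F_1^X$ of Proposition \ref{2jet} and $\widetilde F_1^X\simeq K_X^{\otimes 2}\otimes T_X = K_X$ of Corollary \ref{tangent}, the fibrewise vector field $(\zeta-w)^2\partial_\zeta$ corresponds to a multiple of $dz_1$, and pairing against $\partial_{z_1}\in T_X$ turns $\widetilde\omega_2-\widetilde\omega_1$ into a section of $\Hom(T_X,F_1^X) = K_X^{\otimes 2}$ equal to $\{z_2,z_1\}\,dz_1^{\otimes 2}$, which is the claim. The main obstacle is precisely this constant bookkeeping: one must track how the factor coming from the second-order symbol in the identification $\widetilde F_1^X\simeq K_X$ cancels the factor $\tfrac12$ produced by the Schwarzian in $g^{-1}dg$, and one must fix the left-versus-right convention in the logarithmic-derivative step so that no stray sign or adjoint factor survives; once the normalizations of Propositions \ref{2jet} and \ref{atiyah_1jet} are used consistently, the coefficient is exactly $\{z_2,z_1\}$.
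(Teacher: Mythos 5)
Your proposal is correct and follows essentially the same route as the paper: both identify $\widetilde\omega_2-\widetilde\omega_1$ with the (logarithmic) derivative of the osculating family $g$ defined by $\gamma_{z_2}=g\cdot\gamma_{z_1}$, show this derivative equals $\tfrac12 S(f)(w)(\zeta-w)^2\partial_\zeta$ (the content of Lemma \ref{derivee_schwarzienne}), and then use the identification of Proposition \ref{2jet}, under which $dz_1$ corresponds to $\tfrac12(z_1-z_1(x))^2\partial_{z_1}$, so that the $\tfrac12$ is absorbed and the coefficient is exactly $\{z_2,z_1\}$. The only divergence is cosmetic: where the paper establishes Lemma \ref{derivee_schwarzienne} by writing out the matrix of $g(t)$ explicitly via \eqref{osculation} and \eqref{derivation_PGL2}, you differentiate the three osculation conditions on $M_\epsilon=g(w)^{-1}g(w+\epsilon)$ against the normalized germ $F=g(w)^{-1}\circ f$, which yields the same vector field without matrices.
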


The following lemma provides an interpretation of the schwarzian derivative as an actual derivative.
See \cite{anderson} for more details.

\begin{lemma}\label{derivee_schwarzienne}
Consider $f : U \subset \mathbb C \to \mathbb C$ a local biholomorphism.
Let $g : U \to PGL(2, \mathbb C)$ be the holomorphic map such that for any $t \in U$, $j^2_t(g(t)) = j^2_tf$, i.e. $g(t)(t) = f(t), g(t)'(t) = f'(t), g(t)''(t) = f''(t)$. 

The derivative $\frac{dg(t)}{dt}$ is a map from $U$ to $\mathfrak{sl}_{2, \mathbb C}$.
After identifying $\mathfrak{sl}_{2, \mathbb C}$ with $H^0(\CP^1, T_{\CP^1})$, one has $$\left. \frac{dg(t)}{dt}\right |_{t = t_0} =S(f)(t_0) \frac{(t-t_0)^2}{2} \frac{\partial}{\partial t}$$
\end{lemma}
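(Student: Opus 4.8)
The plan is to reduce to a normalized situation and then carry out the osculation computation explicitly. First, note that $\frac{dg(t)}{dt}$ must be read as the Maurer--Cartan value $g(t)^{-1}\frac{dg(t)}{dt}$, since this is the only reading under which $\frac{dg}{dt}$ genuinely takes values in $\mathfrak{sl}_{2,\mathbb C}$: for $g(t)\in\SL(2,\mathbb C)$ one has $\operatorname{tr}\!\left(g^{-1}\frac{dg}{dt}\right)=\frac{d}{dt}\log\det g=0$. Now fix $t_0\in U$ and set $m=g(t_0)$. Replacing $f$ by $\tilde f=m^{-1}\circ f$ replaces the osculating family $g$ by $\tilde g=m^{-1}\circ g$ (composition of $2$-jets), so that $\tilde g(t_0)=\id$ and $\frac{d\tilde g}{dt}\big|_{t_0}=g(t_0)^{-1}\frac{dg}{dt}\big|_{t_0}$ is exactly the element of $\mathfrak g$ to be identified. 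By the composition formula \eqref{der_schw_composee} one has $S(\tilde f)=S(f)$ because $m^{-1}\in\PGL(2,\mathbb C)$, and $\tilde g(t_0)=\id$ means precisely $\tilde f(t_0)=t_0$, $\tilde f'(t_0)=1$, $\tilde f''(t_0)=0$. Thus it suffices to treat the case $f(t_0)=t_0,\ f'(t_0)=1,\ f''(t_0)=0$, in which $g(t_0)=\id$ and $\frac{dg}{dt}\big|_{t_0}$ is literally a matrix in $\mathfrak{sl}_{2,\mathbb C}=T_{\id}\PGL(2,\mathbb C)$; under this normalization $S(f)(t_0)=f'''(t_0)$.

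Next I would solve the osculation conditions in closed form. Writing $g(t)=\left(\begin{smallmatrix}a(t)&b(t)\\ c(t)&d(t)\end{smallmatrix}\right)\in\SL(2,\mathbb C)$, the identities $M'(x)=(cx+d)^{-2}$ and $M''(x)=-2c(cx+d)^{-3}$ for a Möbius map $M(x)=\frac{ax+b}{cx+d}$ turn the three osculation conditions $g(t)(t)=f(t),\ g(t)'(t)=f'(t),\ g(t)''(t)=f''(t)$ together with $\det=1$ into explicit formulas for $a,b,c,d$ in terms of $f,f',f''$ and $t$ (for instance $ct+d=f'^{-1/2}$ and $c=-\tfrac12 f''f'^{-3/2}$). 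Differentiating these at $t_0$ and using $f''(t_0)=0$, $f'(t_0)=1$, $f(t_0)=t_0$, every surviving term is a multiple of $f'''(t_0)=S(f)(t_0)$, and one finds
\begin{equation*}
\frac{dg}{dt}\Big|_{t_0}=\frac{S(f)(t_0)}{2}\begin{pmatrix}-t_0 & t_0^2\\ -1 & t_0\end{pmatrix}.
\end{equation*}

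Finally I would translate this matrix into a vector field via the isomorphism $\mathfrak g\simeq H^0(\CP^1,T_{\CP^1})$, under which $\left(\begin{smallmatrix}\alpha&\beta\\ \gamma&-\alpha\end{smallmatrix}\right)$ acts on the affine coordinate $x$ as the infinitesimal Möbius transformation $(\beta+2\alpha x-\gamma x^2)\frac{\partial}{\partial x}$. Substituting $\alpha=-\tfrac{t_0}{2}S(f)(t_0)$, $\beta=\tfrac{t_0^2}{2}S(f)(t_0)$, $\gamma=-\tfrac12 S(f)(t_0)$ gives $\frac{S(f)(t_0)}{2}(x^2-2t_0x+t_0^2)\frac{\partial}{\partial x}=S(f)(t_0)\frac{(x-t_0)^2}{2}\frac{\partial}{\partial x}$, which is the claimed formula. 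The computation itself is entirely mechanical; the one real subtlety, and the point I would be most careful about, is the first paragraph---pinning down that the object called $\frac{dg}{dt}$ is the left-logarithmic (Maurer--Cartan) derivative, so that the normalization $g(t_0)=\id$ is legitimate and the output is the vector field vanishing to second order at the source point $t_0$. An alternative to the explicit matrix solution would be to compute $\frac{d}{d\epsilon}\big|_{0}\big(g(t_0)^{-1}\circ g(t_0+\epsilon)\big)$ as an infinitesimal flow on $\CP^1$, reading off the Schwarzian from the third-order Taylor discrepancy between $f$ and its osculating maps; but this reframing still requires the global coefficients of the osculating map and so offers little genuine savings.
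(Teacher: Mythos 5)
Your proof is correct and follows essentially the same route as the paper: both solve the second-order osculation conditions explicitly for the Möbius matrix $g(t)$ and then differentiate, identifying the result with a quadratic vector field on $\CP^1$. The differences are only in execution — you normalize so that $g(t_0)=\id$ (legitimate, by the cocycle property of $S$) to shorten the algebra, and you make explicit that $\frac{dg}{dt}$ is to be read as the left-logarithmic derivative $g^{-1}\frac{dg}{dt}$, a convention the paper's formula \eqref{derivation_PGL2} leaves implicit.
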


In the previous statement, $\CP^1$ is seen as $\CP^1 = \C \cup \{\infty\}$, and $t$ is the coordinate on $\C$.
Thus a basis of $H^0(\CP^1, T_{\CP^1})$ is given by the global vector fields $\frac{\partial}{\partial t}$, $(t-t_0) \frac{\partial}{\partial t}$ and $\frac{(t-t_0)^2}2 \frac{\partial}{\partial t}$

\begin{proof}
A straightforward computation shows that for any $t \in U$, $g(t)$ is given by the the matrix 
$$g(t) \equiv \begin{pmatrix} a(t) & b(t) \\ c(t) & d(t) \end{pmatrix}$$
with 
\begin{equation}
\label{osculation}
\begin{aligned} a &= ff'' -2(f')^2 \\ b &= -2ff' -t(ff'' - 2(f')^2) \\ c &= f'' \\ d &= -2f' - tf'' \end{aligned}
\end{equation}

With these notations, the derivative of $g$ at $t_0$, which is a global vector field on $\CP^1$, is given by
\begin{equation}\label{derivation_PGL2}
\begin{aligned}g'(t_0) =& \frac1{a(t_0)d(t_0) - b(t_0)c(t_0)}((a(t_0)c'(t_0)-a'(t_0)c(t_0))t^2 \\&+ (a'(t_0)d(t_0)-b(t_0)c'(t_0) -a(t_0)d'(t_0)+b'(t_0)c(t_0))t\\& + (b'(t_0)d(t_0)-d'(t_0)b(t_0)))\frac{\partial}{\partial t}\end{aligned}
\end{equation}

Lemma \ref{derivee_schwarzienne} is obtained by combining formulas \eqref{osculation} and \eqref{derivation_PGL2}.
\end{proof}

\begin{proof}(of proposition \ref{lien_derivee_schwarzienne})
Write $z_2 = f(z_1)$, and let $\gamma_{z_1}, \gamma_{z_2}$ be the local sections of $\mathcal P_X$ defined by $z_1, z_2$. One has $\gamma_{z_2} = g \cdot z_2$, where for any $x \in U$, $j^2_{z_1(x)}g(z_1(x)) = j^2_{z_1(x)}f$ and $g(z_1(x)) \in G$.

Let $\widetilde \omega_1, \widetilde \omega_2 : T_X|_U \to \At(\mathcal P_X)|_U$ be the morphisms associated to the projective connections defined by $z_1$ and $z_2$~: $\widetilde \omega_1$ is given by $d\gamma_{z_1}$. In the coordinate $z_1$, $\left(\widetilde \omega_2 - \widetilde \omega_1\right)\left(\frac{\partial}{\partial z_1}\right) \in \mathfrak{sl}_{2, \mathbb C}$ is given by $g'(z_1)$.
Lemma \ref{derivee_schwarzienne} then implies $(\widetilde \omega_2 - \widetilde \omega_1)|_{z_1(x)}\left(\frac{\partial}{\partial z_1}\right) = \{z_2, z_1\}(x)\frac{(z_1 - z_1(x))^2}2 \frac{\partial}{\partial z_1}$. Since $\left.\frac d {dz_1} \frac{(z_1 - z_1(x))^2}2 \right |_{z_1 = z_1(x)} = 0$ and $\frac {d^2} {dz_1^2} \frac{(z_1 - z_1(x))^2}2 = 1$, it is a consequence of proposition \ref{2jet} that $dz_1$, which is a section of $K_X \simeq F_1^X$, is identified with $\frac12 (z_1 - z_1(x))^2\frac{\partial}{\partial z_1} \in \ad(S_X)$.
This ends the proof.
\end{proof}

\begin{corollary}
Any projective connection on $X$ comes from a projective structure on $X$.
\end{corollary}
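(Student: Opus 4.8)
The statement asserts surjectivity of the map $p \mapsto \omega_p$ sending a projective structure on $X$ to the projective connection it induces, constructed above. The plan is to fix a reference projective structure $p_0$ on $X$, for instance the uniformizing one, whose existence is guaranteed by the uniformization theorem, and to set $\omega_0 = \omega_{p_0}$. Given an arbitrary projective connection $\omega$, Proposition \ref{structure_affine} produces a unique global quadratic differential $\varphi = \omega - \omega_0 \in H^0\left(X, K_X^{\otimes 2}\right)$. It then suffices to build a projective structure $p$ with $\omega_p - \omega_0 = \varphi$, since this forces $\omega_p = \omega$.

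To construct $p$, I would work in an atlas $(U_i, z_i)$ of $p_0$, refined so that each $U_i$ is simply connected, and write $\varphi = \varphi_i \, dz_i^{\otimes 2}$ on $U_i$. On each chart I would solve the Schwarzian equation $\{w_i, z_i\} = \varphi_i$ for a new chart $w_i : U_i \to \CP^1$. This is the one genuinely analytic ingredient: the equation $S(f_i) = \varphi_i$ for the function $f_i$ with $w_i = f_i \circ z_i$ is linearised by writing $f_i = y_{i,1}/y_{i,2}$, where $y_{i,1}, y_{i,2}$ is a fundamental system of the second order linear equation $y'' + \tfrac12 \varphi_i \, y = 0$; holomorphic ODE theory furnishes such solutions on the simply connected $U_i$, and the nonvanishing of their Wronskian guarantees that $w_i$ is a local biholomorphism into $\CP^1$.

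It remains to verify that the $(U_i, w_i)$ form a projective atlas inducing $\omega$. On an overlap $U_i \cap U_j$ one has $z_i = g_{ij} \circ z_j$ with $g_{ij} \in \PSL(2, \C)$, hence $\{z_i, z_j\} = 0$, while $\varphi$ being globally defined means $\{w_i, z_i\} \, dz_i^{\otimes 2} = \varphi = \{w_j, z_j\} \, dz_j^{\otimes 2}$. The cocycle identity for the Schwarzian derivative \eqref{der_schw_composee}, applied to the triple $(w_i, z_i, z_j)$, then gives $\{w_i, z_j\} \, dz_j^{\otimes 2} = \varphi$, and applied once more to $(w_i, w_j, z_j)$ it yields $\{w_i, w_j\} = 0$. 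Thus each transition map $w_i \circ w_j^{-1}$ lies in $\PSL(2, \C)$, so the $(U_i, w_i)$ define a projective structure $p$; and Proposition \ref{lien_derivee_schwarzienne} computes $\omega_p - \omega_0$ chart by chart as $\{w_i, z_i\} \, dz_i^{\otimes 2} = \varphi$, whence $\omega_p = \omega$.

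Conceptually, this exhibits $p \mapsto \omega_p$ as an isomorphism of $H^0\left(X, K_X^{\otimes 2}\right)$-torsors between the nonempty space of projective structures and the space of projective connections, after which surjectivity is formal. The only step lying outside the bundle-theoretic formalism of the paper, and the one I expect to be the crux, is this local solvability of the Schwarzian equation by immersions, namely its reduction to the linear ODE above.
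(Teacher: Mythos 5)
Your proof is correct and follows essentially the same route as the paper, which deduces the corollary from Proposition \ref{lien_derivee_schwarzienne} together with the local solvability of the Schwarzian equation $S(f)=h$ (which the paper outsources to a citation rather than reducing to the linear ODE $y''+\tfrac12\varphi y=0$ as you do). The only difference is organizational: you anchor the argument on a global reference structure $p_0$ and glue via the cocycle identity \eqref{der_schw_composee}, whereas the paper leaves the local-to-global step implicit; both hinge on exactly the same two ingredients.
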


\begin{proof}\label{coro_equiv_conn_struct}
This is a consequence of proposition \ref{lien_derivee_schwarzienne} and of the fact that the equation $S(f) = h$ has local solutions for any holomorphic function $h$ defined on an open subset of $\C$ (for this last fact, see for instance \cite{saint-gervais}).
\end{proof}

Thus the datum of a projective structure on $X$ is equivalent to the datum of a projective connection on $X$. 
Proposition \ref{structure_affine} gives~:

\begin{proposition}
The set of projective structures on $X$ is an affine space directed by the vector space $H^0(X, K_X^{\otimes 2})$ of global quadratic differentials on $X$.
\end{proposition}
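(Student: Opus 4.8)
The plan is to identify the set of projective structures on $X$ with the set of projective connections on $X$, and then to transport along this identification the affine structure provided by Proposition \ref{structure_affine}. The discussion preceding Proposition \ref{lien_derivee_schwarzienne} already furnishes a well-defined map $\Psi$ sending a projective structure to the projective connection obtained by glueing the flat local connections attached to its charts $z_i$, and the preceding corollary shows that $\Psi$ is surjective. It therefore remains only to prove that $\Psi$ is injective, after which the bijection lets one import the affine structure verbatim.

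For injectivity I would argue locally and then globalize. Suppose $p_1$ and $p_2$ are projective structures inducing the same projective connection $\widetilde\omega_1 = \widetilde\omega_2$. Cover $X$ by open sets carrying charts $z_1$ of $p_1$ and $z_2$ of $p_2$, and apply Proposition \ref{lien_derivee_schwarzienne}, which identifies the difference $\widetilde\omega_2 - \widetilde\omega_1$ on such an open set with the quadratic differential $\{z_2, z_1\}\,dz_1^{\otimes 2}$. Since the two connections coincide, this quadratic differential vanishes, so the Schwarzian $\{z_2, z_1\}$ is identically zero. By the characterization $S(f) = 0 \iff f \in \PGL(2, \C)$, the transition $z_2 = f(z_1)$ is a Möbius transformation on each such open set. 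Hence the charts of $p_1$ and those of $p_2$ are pairwise related by elements of $\PGL(2, \C)$, so the union of the two atlases is again a projective atlas, and $p_1$ and $p_2$ define the same equivalence class, i.e. $p_1 = p_2$. This proves that $\Psi$ is a bijection.

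Finally, since $\Psi$ is a bijection onto the set of projective connections, which by Proposition \ref{structure_affine} is an affine space directed by $H^0\!\left(X, K_X^{\otimes 2}\right)$, the set of projective structures inherits an affine structure directed by the same vector space: one declares the difference of two projective structures to be the difference of their associated connections, a global quadratic differential, which by Proposition \ref{lien_derivee_schwarzienne} is exactly the Schwarzian cocycle comparing their charts. The main obstacle is the injectivity step, and more precisely the passage from the \emph{local} vanishing of the Schwarzian to the \emph{global} equality of the two structures as equivalence classes of atlases; once this is secured, the remainder is a purely formal transport of the affine structure across $\Psi$.
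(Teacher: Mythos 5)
Your proposal is correct and follows essentially the same route as the paper: a projective structure determines a projective connection via its charts, the corollary to Proposition \ref{lien_derivee_schwarzienne} gives surjectivity, and the affine structure of Proposition \ref{structure_affine} is transported across the resulting bijection. The only difference is that you spell out the injectivity step (vanishing Schwarzian forces Möbius transitions, hence compatible atlases), which the paper leaves implicit in its assertion that the two data are equivalent; this is a legitimate and welcome completion, not a divergence.
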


\section{Branched Projective Structures and Branched Projective Osculating Lines}\label{section_bps_bol}

\subsection{Ramified Coverings and $PGL(2, \mathbb C)$-Action}\label{revet_action}

Let $(U, x)$ be a germ of Riemann surface, identified with $(V, 0)$ where $V \subset \C$ is a neighborhood of $0$. 
Take $n \in \N^*$.
Denote by $R_{x, n}$ the set of $2(n+1)$-jets at $x$ of holomorphic maps $\phi : (U, x) \to \CP^1$ that are ramified at $x$ with ramification degree $n$.
In other words $\phi'(x) = \phi''(x) = \cdots = \phi^{(n)}(x) = 0$ and $\phi^{(n+1)}(x) \neq 0$~:
$$R_{x, n} = \left\{j^{2(n+1)}_x \phi | \text{$\phi$ germ at $x$ of holomorphic $(n+1)$-fold ramified covering with values in $\CP^1$ } \right\}$$
Write also $R^0_{x, n} = \left\{j^{2(n+1)}_x \phi \in R_{x, n} | \phi(x) = 0\right\}$.

Using the identification $(U, x) \simeq (V, 0)$, an element $j^{2(n+1)}_x \phi \in R_{x, n}$ such that $\phi(x) \neq \infty$ can be written~: 
$$j^{2(n+1)}_x \phi = a_0 + a_{n+1} z^{n+1} + a_{n+2}z^{n+2} + \cdots + a_{2(n+1)}z^{2(n+1)}$$
Thus $R_{x, n}$ is identified with $\C \times \C^* \times \C^{n+1}$ by a bijection $\alpha : R_{x, n} \simeq \C \times \C^* \times \C^{n+1}$ defined by $\alpha\left(j^{2(n+1)}_x\phi\right) = \left(a_0, a_{n+1}, \dots, a_{2(n+1)}\right)$. 
Another choice of coodinates on $(U, x)$ induces another identification $\beta : R_{x, n} \simeq \C \times \C^* \times \C^{n+1}$.
The composition $\beta \circ \alpha^{-1}$ is an algebraic automorphism of $\C \times \C^* \times \C^{n+1}$.
As a consequence, $R_{x, n}$ is a smooth algebraic variety of dimension $n+3$. The set $R^0_{x, n} \subset R_{x, n}$ is thus a hypersurface, defined by the equation $(a_0 = 0)$.

The group $G$ acts on the left on the algebraic variety $R_{x, n}$ by postcomposition~: for $g \in G$, $g \cdot j^{2(n+1)}_x \phi = j^{2(n+1)}_x (g \circ \phi)$.
This action is algebraic, and the hypersurface $R^0_{x, n}$ is preserved under the action of the subgroup $H \subset G$.

\begin{proposition}\label{action_G_R}
The group $G$ acts freely on $R_{x, n}$.
Moreover, the equality $G \backslash R_{x, n} = H \backslash R^0_{x, n}$ holds, and this quotient is endowed with a structure of complex manifold of dimension $n$, isomorphic to $\C^n$.
\end{proposition}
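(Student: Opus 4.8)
The plan is to establish the three assertions in turn: freeness of the $G$-action by a direct jet computation, the identification $G\backslash R_{x,n}=H\backslash R^0_{x,n}$ by an orbit argument, and the description of the quotient by exhibiting an explicit slice for the $H$-action on $R^0_{x,n}$.

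For freeness, I would fix $j=j^{2(n+1)}_x\phi$ and suppose $g\cdot j=j$, i.e. $j^{2(n+1)}_x(g\circ\phi)=j^{2(n+1)}_x\phi$. Since freeness at $j$ is equivalent to freeness at $h\cdot j$ for any $h\in G$ (because $g$ fixes $j$ iff $hgh^{-1}$ fixes $h\cdot j$) and $G$ acts transitively on $\CP^1$, I may assume $\phi(x)=a_0\neq\infty$ and work in the affine coordinate, writing $\phi(z)=a_0+a_{n+1}z^{n+1}+\cdots$ with $a_{n+1}\neq 0$. Setting $w=\phi(z)-a_0$ and expanding the holomorphic function $\psi(w)=g(a_0+w)-(a_0+w)=c_0+c_1w+c_2w^2+\cdots$, the relation $g(\phi(z))-\phi(z)=O(z^{2n+3})$ forces $c_0$ (the value at $z=0$), then $c_1a_{n+1}z^{n+1}$, then $c_2a_{n+1}^2z^{2(n+1)}$ to vanish successively, since each has order at most $2(n+1)=2n+2$. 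Here the exact jet order $2(n+1)$ is what is required: it makes the error $O\!\left(w^{2+1/(n+1)}\right)=o(w^2)$ while still discarding $\phi^3$. Thus the $2$-jet at $a_0$ of $w\mapsto g(a_0+w)$ agrees with that of $w\mapsto a_0+w$, and since a $2$-jet of local biholomorphism determines a unique Möbius transformation (the fact recalled before the definition of $\mathcal P_X$), we get $g=\id$.

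For the identification, I would argue that every $G$-orbit meets $R^0_{x,n}$ (given $j^{2(n+1)}_x\phi$, choose $g\in G$ with $g(\phi(x))=0$, so $g\cdot j^{2(n+1)}_x\phi\in R^0_{x,n}$), and that if $j_1,j_2\in R^0_{x,n}$ satisfy $g\cdot j_1=j_2$ then $g$ sends $\phi_1(x)=0$ to $\phi_2(x)=0$, hence $g\in H$. Therefore the inclusion $R^0_{x,n}\hookrightarrow R_{x,n}$ induces a bijection $H\backslash R^0_{x,n}\xrightarrow{\sim}G\backslash R_{x,n}$, which is the family version of remark \ref{rk_quotient_principal_by_isotropies}.

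The core of the argument is the last assertion. In the coordinates of $\alpha$, an element of $R^0_{x,n}$ is $(a_{n+1},\dots,a_{2(n+1)})\in\C^*\times\C^{n+1}$, i.e. $\phi(z)=a_{n+1}z^{n+1}+\cdots+a_{2(n+1)}z^{2(n+1)}+O(z^{2n+3})$. Writing a general element of $H$ as $h(w)=\lambda w/(\gamma w+1)$ with $(\lambda,\gamma)\in\C^*\times\C$, I would compute $h\circ\phi\bmod z^{2n+3}$: as $\phi=O(z^{n+1})$ one has $\phi^2=a_{n+1}^2z^{2(n+1)}+O(z^{2n+3})$ and $\phi^k=O(z^{2n+3})$ for $k\ge 3$, so $h(\phi)\equiv\lambda\phi-\lambda\gamma\phi^2$. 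This yields $a'_k=\lambda a_k$ for $n+1\le k\le 2n+1$ and $a'_{2(n+1)}=\lambda a_{2(n+1)}-\lambda\gamma a_{n+1}^2$. The key structural point is that only the top coefficient receives the $\gamma$-correction, so the $n$ functions $a_{n+2}/a_{n+1},\dots,a_{2n+1}/a_{n+1}$ are $H$-invariant, and for any point there is a \emph{unique} $(\lambda,\gamma)$ carrying it to the slice $\Sigma=\{a_{n+1}=1,\ a_{2(n+1)}=0\}$, namely $\lambda=a_{n+1}^{-1}$, $\gamma=a_{2(n+1)}/a_{n+1}^2$. I would then conclude that $H\times\Sigma\to R^0_{x,n}$ is a biholomorphism (it is holomorphic, injective by freeness and uniqueness of the representative, surjective since every orbit meets $\Sigma$, and a local biholomorphism by the explicit Jacobian), so $H\backslash R^0_{x,n}$ is identified with $\Sigma\cong\C^n$ via $[\phi]\mapsto(a_{n+2}/a_{n+1},\dots,a_{2n+1}/a_{n+1})$.

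The main obstacle is the computational heart of the last step: getting the $H$-action on the jet coordinates right, and in particular seeing that the quadratic term $\phi^2$ contributes \emph{only} to the single top coefficient $a_{2(n+1)}$ — the $2(n+1)$-jet being exactly long enough to record it yet short enough to kill $\phi^3$. This is precisely what makes $\gamma$ act by one affine translation, $\lambda$ by a scaling, and the quotient an affine space $\C^n$; verifying that the slice map is a genuine biholomorphism is then routine from the explicit formulas.
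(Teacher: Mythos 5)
Your proof is correct and follows essentially the same route as the paper: the heart of both arguments is the computation of the action of $h(w)=\lambda w/(\gamma w+1)$ on the coefficients $\left(a_{n+1},\dots,a_{2(n+1)}\right)$, where only the top coefficient picks up the $\gamma$-correction, and the quotient is identified with $\C^n$ via the invariants $a_{n+2}/a_{n+1},\dots,a_{2n+1}/a_{n+1}$. The only cosmetic differences are that you establish freeness of $G$ directly from the uniqueness of a Möbius transformation with prescribed $2$-jet, where the paper reads freeness of $H$ off the same coefficient formulas, and that you present the quotient via an explicit slice $\Sigma$ rather than via the holomorphic submersion $\pi$ whose fibers are the $H$-orbits.
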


\begin{proof}
Since $G$ acts transitively on $\CP^1$, the subset $R^0_{x, n}$ generates $R_{x, n}$ under the action of $G$, i.e. $G \cdot R^0_{x, n} = R_{x, n}$.
Thus $G \backslash R_{x, n} = H \backslash R^0_{x, n}$, and it is enough to show that the action of $H$ on $R^0_{x, n}$ is free.

Take $h \in H$, and let $h = \begin{pmatrix} \alpha & 0 \\ \gamma & \delta \end{pmatrix}$, with $\alpha, \delta \neq 0$. One has
$$
\begin{multlined}
\frac {\alpha\left(a_{n+1}z^{n+1} + \cdots + a_{2(n+1)} z^{2(n+1)} + O\left(z^{2(n+1) + 1}\right)\right)}{\gamma\left(a_{n+1}z^{n+1} + \cdots + a_{2(n+1)} z^{2(n+1)} + O\left(z^{2(n+1) + 1}\right)\right) + \delta} = \frac \alpha\delta \left(a_{n+1} z^{n+1} + \cdots + a_{2n+1}z^{2n+1}\right) + \\ \left( \frac \alpha \delta a_{2(n+1)} - \frac {\alpha \gamma}{\delta^2} a_{n+1}^2 \right)z^{2(n+1)} + O\left(z^{2(n+1)+1}\right)
\end{multlined}
$$
Thus, if $j^{2(n+1)}_x\phi \in R^0_{x, n}$ is given by $j^{2(n+1)}_x\phi = \left(0, a_{n+1}, \dots, a_{2(n+1)}\right)$ in the identification $R_{x, n} \simeq \mathbb C \times \mathbb C^* \times \mathbb C^{n+1}$, one has
$$h \cdot j^{2(n+1)}_x\phi = \left(0, \frac \alpha \delta a_{n+1}, \dots, \frac \alpha \delta a_{2n+1}, \frac \alpha \delta a_{2(n+1)} - \frac {\alpha \gamma}{\delta^2} a_{n+1}^2 \right)$$
thus $h \cdot j^{2(n+1)}_x\phi = j^{2(n+1)}_x\phi$ if and only if $\alpha = \delta$ and $\gamma = 0$, i.e. if and only if $h = \id$.
This shows that the action of $H$ on $R^0_{x, n}$, thus of $G$ on $R_{x, n}$, is free. Moreover the map $\pi : R^0_{x, n} \to \mathbb C^n$ defined by $\pi(0, a_{n+1}, \dots, a_{2(n+1)}) = \left( \frac {a_{n+2}} {a_{n+1}}, \dots, \frac{a_{2n+1}}{a_{n+1}} \right)$ is a holomorphic submersion.
The fibers of $\pi$ are the orbits of $H$.
Although $\pi$ depends on the coordinates on $(U, x)$, if $\pi'$ is the map induced by another coordinate, $\pi' = f \circ \pi$, with $f$ a biholomorphism of $\mathbb C^n$.
Thus $G \backslash R_{x, n} = H \backslash R^0_{x, n}$ is endowed by $\pi$ with the structure of a complex manifold, biholomorphic to $\C^n$.
\end{proof}

\begin{remark}\label{biholomorphisme}
Take $j^{2(n+1)}_x\phi \in R_{x, n}$ and $\alpha$ between two open subsets of $\CP^1$, that is not necessarily a Möbius transformation.
A straightforward computation shows that $j^{2(n+1)}_x(\alpha \circ \phi) \in R_{x, n}$ and $j^{2(n+1)}_x\phi$ are in the same orbit under the action of $G$.
In particular, if $Y$ is a Riemann surface, $\psi : U \to Y$ a branched covering of branching order $n$ at $x$, and $\chi : \psi(U) \to \CP^1$ a holomorphic chart, the orbit of $j^{2(n+1)}_x(\chi \circ \psi) \in R_{x, n}$ under the action of $G$ does not depend on the choice of $\chi$.
As a consequence, the class of $\psi$ in $G \backslash R_{x, n}$ is well-defined.
\end{remark}

\begin{remark}\label{action_Aff_R}
The affine group $\Aff(\C) \subset G$ acts freely on the space
$$\widetilde R_{x, n} = \left\{j^{2n+1}_x \phi | \text{$\phi$ germ at $x$ of branched map of branching order $n$ } \right\}$$
and it follows from the computation in the proof of proposition \ref{action_G_R} that the orbits of $\Aff(\mathbb C)$ in $\widetilde R_{x, n}$ are exactly the images of the orbits of $G$ in $R_{x, n}$ under the obvious projection $R_{x, n} \to \widetilde R_{x, n}$.
As a consequence, $G \backslash R_{x, n} = \Aff(\mathbb C) \backslash \widetilde R_{x, n}$.
\end{remark}

\subsection{Bundles of Branched Projective $2$-Frames}\label{gunning_branché}

Let $D$ be an effective divisor on $X$, and write
\begin{equation}\label{diviseur}
D = \sum_{i = 1}^r n_i \cdot y_i 
\end{equation}
where $n_i \ge 1$ for $1 \le i \le r$, and the $y_i$ are pairwise distinct. Write $\mathbb D = \{y_1, \dots, y_r\}$, and $\mathbb X = X \backslash \mathbb D$.
Denote by $A_X^D = G \backslash R_{y_1, n_1} \times G \backslash R_{y_2, n_2} \times \cdots \times G \backslash R_{y_r, n_r}$.

Let $\alpha \in A_X^D$~: $\alpha = \left(\alpha_1, \dots, \alpha_r\right)$, where for any $i$ between $1$ and $r$, $\alpha_i \in G \backslash R_{y_i, n_i}$.
In other words, as in section \ref{revet_action}, $\alpha_i$ is an orbit for the action of $G$ on the set of jets of order $2(n_i+1)$ of germs at $y_i$ of holomorphic maps to $\CP^1$ with branching order $n_i$.

For any $x \in X$ we write~:
\begin{equation}
\left \{ \begin{aligned} \mathcal P_X^D(\alpha)_x &= \mathcal P_{X, x} &\text{ if $x \in \mathbb X$} \\ \mathcal P_X^D(\alpha)_x &= \alpha_i &\text{ if $x = y_i$, $1 \le i \le r$} \end{aligned} \right.
\end{equation}
So that for any $x \in X$, the group $G$ acts holomorphically, freely and transitively on the manifold $\mathcal P_X^D(\alpha)_x$.

Fix $1 \le i \le r$, and take $U_i$ a neighborhood of $y_i$ in $X$ that does not contain any $y_j$ for $j \neq i$. 
Consider $\phi : U_i \to \CP^1$ a holomorphic map ramified with order $n_i$ at $y_i$, and with no other ramification point.
Suppose $j^{2(n_i+1)}_{y_i}\phi \in \alpha_i$.
For any $x \in U_i \cap \mathbb X$, we set $s(x) = j^2_x\phi$ and $s(y_i) = j^{2(n_i+1)}_{y_i}\phi$, so that for all $x \in U_i$, $s(x) \in \mathcal P_X^D(\alpha)_x$.
There exists a unique structure of holomorphic manifold on $\bigsqcup_{x \in U_i} \mathcal P_X^D(\alpha)_x$ such that $s$ is a section of the holomorphic principal bundle $\bigsqcup_{x \in U_i} \mathcal P_X^D(\alpha)_x \to U_i$.
In restriction to $\mathbb X \cap U_i$, the holomorphic structure defined by $s$ on $\bigsqcup_{x \in U_i} \mathcal P_X^D(\alpha)_x$ coincides with the holomorphic structure of $\mathcal P_X|_{U_i \cap \mathbb X}$.
A straightforward computation based on formula \eqref{osculation} shows that the complex manifold structure on $\bigsqcup_{x \in U_i} \mathcal P_X^D(\alpha)_x$ does not depend on the choice of $\phi$.

As a consequence $\bigsqcup_{x \in X} \mathcal P_X^D(\alpha)_x$, along with its obvious projection on $X$, is endowed with the structure of a holomorphic principal bundle in the neighborhood $U_i$ of each $y_i$.
On the intersections $U_i \cap \mathbb X$, these structures coincide with the one on $\mathcal P_X|_{\mathbb X}$.
We thus have a holomorphic principal bundle $\mathcal P_X^D(\alpha) = \bigsqcup_{x \in U_i} \mathcal P_X^D(\alpha)_x$.

\begin{definition}
The principal bundle $\mathcal P_X^D(\alpha)$ is said to be the \emph{bundle of branched projective $2$-frames} on $X$ with \emph{branching divisor} $D$ and \emph{branching class} $\alpha = (\alpha_1, \dots, \alpha_r) \in A_X^D$.
\end{definition}

As in the unbranched case, the $\PGL(2, \C)$principal bundles $\mathcal P_X^D(\alpha)$ might be more easily imagined as the trivialization bundle of a $\CP^1$-bundle.

\begin{definition}\label{def_CP1_bundle_BPS}
Let $\alpha \in A_X^D$. The $\CP^1$-bundle $G \backslash \left(\CP^1 \times \mathcal P_X^D(\alpha)\right) = H \backslash \mathcal P_X^D(\alpha)$ is called the \emph{branched projective osculating line} on $X$, with branching divisor $D$ and branching class $\alpha$.
It is denoted by $P^D_X(\alpha)$.
\end{definition}

Again, remembering that we chose a preferred point $x_0 = 0 \in \CP^1$, $\mathcal P_X^D(\alpha)$ contains a canonical subbundle, denoted by $\mathcal S_X^D(\alpha)$, that is a principal $H$-bundle, and whose fiber over $x \in X$ is defined as follows~:
\begin{equation}
\left \{ \begin{aligned} \mathcal S_X^D(\alpha)_x &= \mathcal S_{X, x} &\text{ if $x \in \mathbb X$} \\ \mathcal S_X^D(\alpha)_x &= \alpha_i \cap R^0_{y_i, n_i} &\text{ if $x = y_i$, $1 \le i \le r$} \end{aligned} \right.
\end{equation}
In particular the subbundle $\mathcal S^D_X(\alpha)$ defines a canonical section of $P^D_X(\alpha)$, denoted by $s_X^D(\alpha)$.

Pick a function $f$ on an open subset $U \subset X$ whose derivative does not vanish on $\mathbb X \cap U$ and such that $j^{2(n_i+1)}_xf \in \alpha_i$ for any $y_i \in U$.
By definition of $\mathcal P^D_X(\alpha)$, $f$ defines a section $\overline f$ of the bundle $\mathcal P^D_X(\alpha)|_U$, thus a local trivialization of $P^D_X(\alpha)$, that we denote by $(\Pi, \mu) : P^D_X(\alpha) \simeq U \times \CP^1$.
In this trivialization, the canonical section $s_X^D(\alpha)$ is given by $\mu\left(s_X^D(\alpha)(x)\right) = f(x)$.
In particular, the section $s_X^D(\alpha)$ is holomorphic, and so is the subbundle $\mathcal S^D_X(\alpha)$.

As in the unbranched case, the adjoint bundle $\ad\left(\mathcal P_X^D(\alpha)\right)$ is endowed with a filtration $F_1^{X, D}(\alpha) \subset F_2^{X, D}(\alpha) = \ad\left(\mathcal S_X^D(\alpha)\right) \subset \ad(\mathcal P_X^D(\alpha))$, where $F_1^{X, D}(\alpha)$ is a line bundle and $F_2^{X, D}(\alpha)$ is a rank $2$ vector bundle.
For any $x \in X$, $\ad\left(\mathcal P_X^D(\alpha)\right)_x$ is identified with $H^0\left(P^D_X(\alpha)_x, T_{P^D_X(\alpha)_x}\right)$. We then have 

\begin{equation}
\left \{ 
\begin{aligned}
F_2^{X, D}(\alpha)_x &= H^0\left(P^D_X(\alpha)_x, T_{P^D_X(\alpha)_x}\left(-s_X^D(\alpha)(x)\right)\right) = \ad\left(\mathcal S_X^D(\alpha)\right) \\
F_1^{X, D}(\alpha)_x &= H^0\left(P^D_X(\alpha)_x, T_{P^D_X(\alpha)_x}\left(-2s_X^D(\alpha)(x)\right) \right)
\end{aligned}
\right .
\end{equation}

One has $F_2^{X, D}(\alpha) = \ad(\mathcal S_X^D(\alpha)$.
Any $\gamma \in \mathcal P_X^D(\alpha)_x$, identifies $\mathcal P_X^D(\alpha)_x$ with $G$, and thus $\ad(\mathcal P_X^D(\alpha))_x$ with $\mathfrak g$.
If moreover $\gamma \in \mathcal S_X^D(\alpha)_x$, then $F_2^{X, D}(\alpha)_x = \ad(\mathcal S_X^D(\alpha))_x$ is identified to $\mathfrak h$ and $F_1^{X, D}(\alpha)_x$ to $\mathfrak l$.

\subsection{Bundles of Branched Projective $2$-Frames and Pullbacks of the Bundle of Projective $2$-Frames}\label{tirés-arrière}

Let $Y$ be a Riemann surface.
Suppose given a local biholomorphism $f : X \to Y$.
The map $f$ gives rise to an isomorphism of principal $G$-bundles $\delta f : \mathcal P_X \to f^*\mathcal P_Y$, whose inverse $(\delta f)^{-1}$ is defined in the following way.
For all $x \in X$, $(f^* \mathcal P_Y)_x = \mathcal P_{Y, f(x)}$.
Set for any $j^2_{f(x)}\phi \in (f^* \mathcal P_Y)_x$~: $(\delta f)^{-1}_x\left(j^2_{f(x)} \phi\right) = j^2_{x}(\phi \circ f) \in \mathcal P_{X,x}$.
Let $\phi : U \subset Y \to \CP^1$ be a local biholomorphism and let $\overline \phi$ denote the section of $\mathcal P_Y|_U$ induced by $\phi$.
Then $(\delta f)^{-1}(f^*\overline \phi) = \overline {\phi \circ f}$, where $\overline {\phi \circ f}$ denotes the section of $\mathcal P_X|_{f^{-1}(U)}$ induced $\phi \circ f$.
As a consequence, $(\delta f)^{-1}$ is holomorphic.
Moreover $(\delta f)^{-1}$ is clearly equivariant with respect to the action of $G$ on $\mathcal P_X$ and $f^*\mathcal P_Y$, thus $\delta f$ is indeed an isomorphism of principal bundles.
If $f' : Y \to Z$ is a local biholomorphism from $Y$ to a Riemann surface $Z$, one has $\delta(f' \circ f) = f^*(\delta f') \circ (\delta f)$.
If $U \subset X$ is an open subset such that $f|_U$ is a biholomorphism onto its image, then $\delta f|_U$ is given by the identification $\mathcal P_X|_U \simeq \mathcal P_Y|_{f(U)}$ of remark \ref{intrinseque}.

From now on, suppose $f : X \to Y$ is a nonconstant holomorphic map.
As in \eqref{diviseur}, write $D = \sum_{i = 1}^r n_i \cdot y_i $ the branching divisor of $f$, $\mathbb D = \{y_1 \dots, y_r\}$ and $\mathbb X = X \backslash \mathbb D$.
By remark \ref{biholomorphisme}, for all $y_i \in \mathbb D$, $j^{2(n_i+1)}_{y_i}f$ defines a class $\alpha_i$ in $G \backslash R_{y_i, n_i}$.
Write $\alpha = (\alpha_1, \dots \alpha_r) \in A_X^D$.

As in the unbranched case, define an isomorphism of principal bundles $\delta f : \mathcal P_X^D(\alpha) \simeq f^* \mathcal P_Y$ by setting for all $j^2_{f(x)}\phi \in (f^* \mathcal P_Y)_x$~: 
\begin{equation}
\left \{
\begin{aligned}
\mathcal (\delta f)^{-1}\left(j^2_{f(x)}\phi\right) &= j^2_x(\phi \circ f) &\text{ if $x \in \mathbb X$} \\
\mathcal (\delta f)^{-1}\left(j^2_{f(x)}\phi\right) &= j^{2(n_i+1)}_x(\phi \circ f) &\text{ if $x = y_i$, $1 \le i \le r$}
\end{aligned}
\right.
\end{equation}
The same arguments as in the unbranched case show that $f$ is an isomorphism of $G$-principal bundles.
In particular, on the surface $\mathbb X$, the isomorphism $\delta f$ is the one defined in the unbranched case.

As in remark \ref{intrinseque}, one has isomorphisms $\iota_1 : \ad\left(\mathcal S_X^D(\alpha)\right) \simeq f^* \ad(\mathcal S_Y)$, $\iota_2 : \ad\left(\mathcal P_X^D(\alpha)\right)/F_1^{X, D}(\alpha) \simeq f^* \ad(\mathcal P_Y)/F_1^Y$.
Since $\delta f$ maps $\mathcal S_X^D(\alpha)$ to $f^* \mathcal S_Y$, the isomorphism $\iota_1$ (respectively $\iota_2$) maps $F_1^{X, D}(\alpha)$ (resp. $F_2^{X, D}(\alpha)/F_1^{X, D}(\alpha)$) to $f^*F_1^Y$ (resp. $f^* (F_2^Y/F_1^Y)$).
The restriction $f|_{\mathbb X}$ is a local biholomorphism and thus induces  $\iota_3 : J^1(T_X|_{\mathbb X}) \simeq f^*J^1\left(T_Y|_{f(\mathbb X)}\right)$ and $\iota_4 : \widetilde F_2X|_{\mathbb X} \simeq f^*\widetilde F_2^Y|_{f(\mathbb X)}$.
On $\mathbb X$, one gets diagrams analogous to the ones in remarks \eqref{intrinseque_2jet} and \eqref{intrinseque_1jet}.

\begin{remark}\label{ident_restreinte}
\begin{itemize}
\item[(i)] The isomorphism $\iota_1$ restricted to the subbundle $F_1^{X, D}(\alpha)$, composed with the isomorphism $F_1^Y \simeq K_Y$ of proposition \ref{2jet}, gives an isomorphism $I_1 : F_1^{X, D}(\alpha) \simeq f^*K_Y$.
Composing on $\mathbb X$ with the identification $K_{\mathbb X} = K_X|_{\mathbb X} \simeq F_1^{X, D}(\alpha)|_{\mathbb X} = F_1^{\mathbb X}$, one gets an isomorphism $\widetilde I_1 : K_{\mathbb X} \simeq f^* (K_Y|_{f(\mathbb X)})$.
The fact that diagram \eqref{intrinseque_2jet} commutes implies that $\widetilde I_1$ coincides with the dual map of the differential $(df|_{\mathbb X})^{-1}$.

\item[(ii)] Similarly, the isomorphism $\iota_2$, composed with the identification of proposition \ref{1jet}, gives an isomorphism $I_2 : \ad\left(\mathcal P_X^D(\alpha)\right)/\ad\left(\mathcal S_X^D(\alpha)\right) \simeq f^*T_Y$. 
Restricted to $\mathbb X$, $I_2$ induces an isomorphism $\widetilde I_2 : T_{\mathbb X} \simeq f^*\left(T_Y|_{f(\mathbb X)}\right)$.
The commutativity of diagram \eqref{intrinseque_1jet} implies that $\widetilde I_2$ coincides with the differential $df|_{\mathbb X}$.

\item[(iii)] Finally, the isomorphism $\iota_1$ (respectively $\iota_2$) gives after quotienting by $F_1^{X, D}(\alpha)$ (resp. after restricting to $\ad(\mathcal S_X^D(\alpha))$) an isomorphism $\ad(\mathcal S_X^D(\alpha))/F_1^{X, D}(\alpha) \simeq f^*(\ad(\mathcal S_Y)/F_1^Y)$, which, composed with the identifications of propositions \ref{2jet} et \ref{1jet}, gives an isomorphism $I_3 : \ad(\mathcal S_X^D(\alpha))/F_1^{X, D}(\alpha) \simeq f^* \mathcal O_Y = \mathcal O_X$.
Restricted to $\mathbb X$, $I_3$ induces an isomorphism $\widetilde I_3 : \mathcal O_{\mathbb X} \simeq \mathcal O_{\mathbb X}$.
The commutativity of diagram \ref{intrinseque_2jet} (or of diagram \ref{intrinseque_1jet}) implies that $\widetilde I_3$ is identity.
\end{itemize}
\end{remark}

Now let us have a look at the Atiyah bundle $\At\left(\mathcal P_X^D(\alpha)\right)$.
The isomorphism $\delta f : \mathcal P_X^D(\alpha) \simeq f^* \mathcal P_Y$ gives an isomorphism
\begin{equation}
\epsilon : \At\left(\mathcal P_X^D(\alpha)\right) \simeq \At(f^* \mathcal P_Y)
\end{equation}
Moreover, the pullback $f^* \mathcal P_Y$ is endowed with a tautological map $F : f^*\mathcal P_Y \to \mathcal P_Y$, equivariant with respect to the action of $G$, such that the following diagram commutes~:
\begin{equation}
\begin{tikzcd} f^*\mathcal P_Y \arrow[d, twoheadrightarrow] \arrow[r, "F"] & \mathcal P_Y \arrow[d, twoheadrightarrow] \\
X \arrow[r, "f"] & Y \end{tikzcd}
\end{equation}
Since $F$ is equivariant with respect to the action of $G$, the differential $dF$ induces the following morphism, that we also denote by $dF$~:
\begin{equation}
dF : \At(f^* \mathcal P_Y) \to f^*\At(\mathcal P_Y)
\end{equation}
By construction, $dF$ is an isomorphism when restricted to $\mathbb X$.

By composing $\epsilon$ and $dF$, one gets a morphism $dF \circ \epsilon : \At(\mathcal P_X^D(\alpha)) \to f^*\At(\mathcal P_Y)$, that is an isomorphism when restricted to $\mathbb X$, and that induces a morphism of short exact sequences~:
\begin{equation}\label{tiré_arrière_atiyah}
\begin{tikzcd}0 \arrow[r] & \ad\left(\mathcal P_X^D(\alpha)\right) \arrow[d, "\sim" {sloped}, "\iota"{swap}] \arrow[r] & \At\left(\mathcal P_X^D(\alpha)\right) \arrow[r] \arrow[d, "dF \circ \epsilon"] & T_X \arrow[d, "df"] \arrow[r] & 0 \\ 0 \arrow[r] & f^* \ad(\mathcal P_Y) \arrow[r] & f^* \At(\mathcal P_Y) \arrow[r] & f^*T_Y \arrow[r] & 0  \end{tikzcd}
\end{equation}
where the bundle isomorphism $\iota$ is induced by $\delta f$.
Moreover $dF \circ \epsilon$ maps $\At(\mathcal S_X^D(\alpha))$ to $f^* \At(\mathcal S_Y)$.

\subsection{The Structure of Branched Filtered $\SO(3, \C)$-Bundle on $\ad\left(\mathcal P_X^D(\alpha)\right)$}

Take $D$, $\mathbb D$ and $\mathbb X$ as in \eqref{diviseur}.
Let $\alpha \in A_X^D$.

\begin{proposition}\label{tangent_branché}
\begin{itemize}
\item[(i)] The line bundle $F^{X, D}_1(\alpha)$ is canonically isomorphic to $K_X(-D)$.
\item[(ii)]The line bundle $\ad\left(\mathcal P_X^D(\alpha)\right)/\ad\left(\mathcal S_X^D(\alpha)\right)$ is canonically isomorphic to $T_X(D)$.
\item[(iii)] The line bundle $\ad\left(\mathcal S_X^D(\alpha)\right)/F^{X, D}_1(\alpha)$ is canonically isomorphic to $\mathcal O_X$.
\end{itemize}
\end{proposition}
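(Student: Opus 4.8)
The plan is to build each of the three isomorphisms by extending, across the branching points, the canonical identifications already available on $\mathbb X$. Over $\mathbb X$ the bundle $\mathcal P_X^D(\alpha)$ agrees with $\mathcal P_X$ and $\mathcal S_X^D(\alpha)$ with $\mathcal S_X$, so corollary \ref{tangent} (together with $F_2^X=\ad(\mathcal S_X)$) supplies canonical isomorphisms $F_1^{X,D}(\alpha)|_{\mathbb X}\simeq K_{\mathbb X}$, $\bigl(\ad(\mathcal P_X^D(\alpha))/\ad(\mathcal S_X^D(\alpha))\bigr)|_{\mathbb X}\simeq T_{\mathbb X}$ and $\bigl(\ad(\mathcal S_X^D(\alpha))/F_1^{X,D}(\alpha)\bigr)|_{\mathbb X}\simeq \mathcal O_{\mathbb X}$. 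Since $D$ is supported on $\mathbb D$, one has $K_X(-D)|_{\mathbb X}=K_{\mathbb X}$, $T_X(D)|_{\mathbb X}=T_{\mathbb X}$ and $\mathcal O_X|_{\mathbb X}=\mathcal O_{\mathbb X}$, so it suffices to show that each of these isomorphisms of line bundles extends to a nowhere-vanishing isomorphism across every $y_i$.

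To carry out the extension I would reduce to the pullback picture of subsection \ref{tirés-arrière}. For each $i$ choose a neighbourhood $U_i$ of $y_i$ meeting $\mathbb D$ only in $y_i$, together with a holomorphic map $\phi_i : U_i \to \CP^1$ of ramification order $n_i$ at $y_i$ and with $j^{2(n_i+1)}_{y_i}\phi_i \in \alpha_i$. Then $\mathcal P_X^D(\alpha)|_{U_i}$ is exactly the bundle of branched projective $2$-frames of the map $\phi_i$, so the constructions of subsection \ref{tirés-arrière} and the global isomorphisms of remark \ref{ident_restreinte} apply with $(X,Y,f,D)$ replaced by $(U_i,\CP^1,\phi_i,n_i\cdot y_i)$. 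This yields isomorphisms $F_1^{X,D}(\alpha)|_{U_i}\simeq \phi_i^* K_{\CP^1}$, $\bigl(\ad(\mathcal P_X^D(\alpha))/\ad(\mathcal S_X^D(\alpha))\bigr)|_{U_i}\simeq \phi_i^* T_{\CP^1}$ and $\bigl(\ad(\mathcal S_X^D(\alpha))/F_1^{X,D}(\alpha)\bigr)|_{U_i}\simeq \phi_i^*\mathcal O_{\CP^1}=\mathcal O_{U_i}$, all defined over the whole of $U_i$, branching point included.

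Next I would combine these with the standard ramification isomorphisms. Since $d\phi_i : T_{U_i}\to \phi_i^* T_{\CP^1}$ vanishes to order $n_i$ at $y_i$, the differential and codifferential of $\phi_i$ furnish canonical isomorphisms $\phi_i^* T_{\CP^1}\simeq T_X(D)|_{U_i}$ and $\phi_i^* K_{\CP^1}\simeq K_X(-D)|_{U_i}$, while $\phi_i^*\mathcal O_{\CP^1}=\mathcal O_{U_i}$. Composing, I obtain isomorphisms $F_1^{X,D}(\alpha)|_{U_i}\simeq K_X(-D)|_{U_i}$, $\bigl(\ad(\mathcal P_X^D(\alpha))/\ad(\mathcal S_X^D(\alpha))\bigr)|_{U_i}\simeq T_X(D)|_{U_i}$ and $\bigl(\ad(\mathcal S_X^D(\alpha))/F_1^{X,D}(\alpha)\bigr)|_{U_i}\simeq \mathcal O_{U_i}$.

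It then remains to glue. Choosing the $U_i$ pairwise disjoint, the only overlaps are the $U_i\cap\mathbb X$, so I only need each local isomorphism above to agree on $U_i\cap\mathbb X$ with the canonical unbranched identification of corollary \ref{tangent}. This is precisely the content of the commutativity assertions of remark \ref{ident_restreinte}: there the maps induced on $\mathbb X$ are computed to be $\widetilde I_1=\bigl((d\phi_i|_{\mathbb X})^{-1}\bigr)^{*}$, $\widetilde I_2=d\phi_i|_{\mathbb X}$ and $\widetilde I_3=\mathrm{id}$, which are exactly the restrictions to $\mathbb X$ of the ramification isomorphisms used above; over $\mathbb X$ these cancel against the codifferential factor (e.g. $(d\phi_i)^{*}\circ\bigl((d\phi_i)^{-1}\bigr)^{*}=\mathrm{id}$), leaving the canonical identification. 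Hence on each $U_i\cap\mathbb X$ the two descriptions coincide, and the local isomorphisms patch with the ones on $\mathbb X$ into the global isomorphisms (i), (ii) and (iii). The one genuine point of care — and the main obstacle — is this compatibility step: remark \ref{ident_restreinte} is phrased for a global branched covering, so the work is to recognise that its construction is entirely local (taking $Y=\CP^1$) and that its maps $\widetilde I_k$ exactly absorb the ramification twists on the overlap, so that nothing obstructs the gluing.
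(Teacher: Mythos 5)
Your proof is correct and follows essentially the same route as the paper: restrict to $\mathbb X$ where corollary \ref{tangent} applies, use a local branched map $f_i$ of class $\alpha_i$ near each $y_i$ together with the isomorphisms $I_1, I_2, I_3$ of remark \ref{ident_restreinte} and the (co)differential of $f_i$ to identify the bundles over $U_i$, and glue via the compatibility statements $\widetilde I_1=((df_i|_{\mathbb X})^{-1})^*$, $\widetilde I_2=df_i|_{\mathbb X}$, $\widetilde I_3=\mathrm{id}$ on the overlaps $U_i\cap\mathbb X$. The only cosmetic difference is that you work out all three cases and phrase the target as $\CP^1$ rather than $f_i(U_i)$, whereas the paper writes out only case $(i)$ in detail.
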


\begin{proof}
The proofs of $(i)$, $(ii)$ and $(iii)$ are essentially the the same, they involve respectively the isomorphisms $I_1$, $I_2$ and $I_3$ of remark \ref{ident_restreinte}. We will only write down the proof of point $(i)$.

One has $\mathcal P_X^D(\alpha)|_{\mathbb X} = \mathcal P_{\mathbb X}$.
Thus, according to corollary \ref{tangent}, one has an isomorphism $\kappa : F_1^{X, D}(\alpha)|_{\mathbb X} \simeq K_X|_{\mathbb X}$.

For any integer $i$ between $1$ and $r$, let $U_i$ be an open neighborhood of $y_i$ in $X$ such that $U_i \cap \mathbb D = \{y_i\}$.
Consider $f_i : U_i \to \CP^1$ a nonconstant map whose branching divisor is $n_i \cdot y_i$ and such that $j^{2(n_i+1)}_{y_i}f_i \in \alpha_i$.
According to section \ref{tirés-arrière}, the map $f_i$ induces an isomorphism of bundles $\delta f_i : \mathcal P_X^D(\alpha)|_{U_i} \simeq f_i^*\mathcal P_{f_i(U_i)}$.
Following remark \ref{ident_restreinte}, $\delta f_i$ provides an isomorphism $I_{1, i} : F_1^{X, D}(\alpha)|_{U_i} \simeq f_i^*K_{f_i(U_i)}$.
Moreover, the dual isomorphism of the differential $df : T_X|_{U_i} \simeq f^*T_{f(U_i)}$ is an isomorphism $df^* : f^*K_{f(U_i)} \simeq K_X|_{U_i}(-n_i \cdot y_i)$.
Thus one has an isomorphism $df^* \circ I_{1, i} : F_1^{X, D}(\alpha)|_{U_i} \simeq K_X|_{U_i}(-n_i \cdot y_i)$.

When restricted to $U_i \cap \mathbb X$, the isomorphisms $\kappa$ and $df^* \circ I_{1, i}$ coincide.
Indeed, according to remark \ref{ident_restreinte}, if $F_1^{X, D}(\alpha)|_{U_i \cap \mathbb X}$ is identified with $K_{U_i \cap \mathbb X}$, then $I_{1, i} \circ \kappa$ coincides $(df^*)^{-1}$.

Thus the $I_{1, i}$ ($1 \le i \le r$) and $\kappa$ can be glued together on the intersections $U_i \cap \mathbb X$, to give an isomorphism $F^{X, D}_1(\alpha) \simeq K_X(-D)$
\end{proof}

A straightforward computation shows that for any $x \in X$, $\ad\left(\mathcal S_X^D(\alpha)\right)$ is the orthogonal of $F_1^{X, D}$ for the killing form on $\ad\left(\mathcal P_X^D(\alpha)\right)_x$.
Thus the vector bundle $\ad\left(\mathcal P_X^D(\alpha)\right)$, along with the filtration $F_1^{X, D}(\alpha) \subset \ad\left(\mathcal S_X^D(\alpha)\right) \subset \ad\left(\mathcal P_X^D(\alpha)\right)$, is a \emph{branched filtered $\SO(3, \C)$-bundle}, as in the following definition, introduced in \cite{biswas-dumitrescu}.

\begin{definition}
A \emph{branched filtered $\SO(3, \C)$-bundle} on $X$ is a rank $3$ holomorphic vector bundle on $X$ endowed with
\begin{itemize}
\item[(i)] A nondegenerate bilinear form $B_x$ on each fiber $W_x$ of $W$, such that $B_x$ varies holomorphically with $x \in X$.
\item[(ii)] An identification $\bigwedge^3 W \simeq \mathcal O_X$ such that the bilinear form induced by $B$ on the fibers of $\bigwedge^3 W$ is the trivial one on $\mathcal O_X$
\item[(iii)] A filtration $F_1 \subset F_2 \subset W$ along with identifications $F_1 \simeq K_X(-D)$ and $F_2 / F_1 \simeq \mathcal O_X$ such that for all $x \in X$, $F_2$ is the orthogonal of $F_1$ for the nondegenerate bilinear form $B_x$.
\end{itemize}
\end{definition}

\subsection{Branched Projective Connections}\label{conn_proj_branchées}

In this subsection we introduce the branched analog of projective connections, that provide examples of \emph{branched Cartan connections}.

\begin{proposition}\label{atiyah_1jet_branché}
There is a morphism~:
\begin{equation}
\Phi_X^D(\alpha) : \At\left(\mathcal S_X^D(\alpha)\right)/F_1^{X, D}(\alpha) \xrightarrow{} \ad\left(\mathcal P_X^D(\alpha)\right)/F_1^{X, D}(\alpha)
\end{equation}
that is an isomorphism when restricted to $\mathbb X$ and such that the following diagram commutes~:
\begin{equation}\label{iso_atiyah_branché}
\begin{tikzcd}
\At\left(\mathcal S_X^D(\alpha)\right)/F_1^{X, D}(\alpha) \arrow[d, twoheadrightarrow] \arrow[r, "\Phi_X^D(\alpha)"] & \ad\left(\mathcal P_X^D(\alpha)\right)/F_1^{X, D}(\alpha) \arrow[d, twoheadrightarrow] \\
T_X \arrow[r, hookrightarrow] & T_X(D) = \ad\left(\mathcal P_X^D(\alpha)\right)/\ad\left(\mathcal S_X^D(\alpha)\right)
\end{tikzcd}
\end{equation}
\end{proposition}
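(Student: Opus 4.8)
The plan is to build $\Phi_X^D(\alpha)$ in two stages: define it over $\mathbb X$ directly from Proposition \ref{atiyah_1jet}, where $\mathcal P_X^D(\alpha)$ is nothing but the ordinary bundle of projective $2$-frames $\mathcal P_{\mathbb X}$, and then extend the resulting morphism across each branching point $y_i$ using the pullback machinery of section \ref{tirés-arrière} applied to a local branched map to $\CP^1$. First I would set $\Phi_X^D(\alpha)|_{\mathbb X}$ to be the isomorphism of Proposition \ref{atiyah_1jet}, since $\mathcal P_X^D(\alpha)|_{\mathbb X} = \mathcal P_{\mathbb X}$ and $F_1^{X,D}(\alpha)|_{\mathbb X} = F_1^{\mathbb X}$. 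This immediately makes the restriction to $\mathbb X$ an isomorphism and makes \eqref{iso_atiyah_branché} commute over $\mathbb X$, where the inclusion $T_X \hookrightarrow T_X(D)$ is an equality and \eqref{iso_atiyah_branché} reduces to \eqref{iso_atiyah}.

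Next, the construction near a branching point. Fix $i$, a neighborhood $U_i$ of $y_i$ with $U_i \cap \mathbb D = \{y_i\}$, and a nonconstant $f_i : U_i \to \CP^1$ whose branching divisor is $n_i \cdot y_i$ and with $j^{2(n_i+1)}_{y_i} f_i \in \alpha_i$. Applying Proposition \ref{atiyah_1jet} to the Riemann surface $\CP^1$ yields an isomorphism $\Phi_{\CP^1} : \At(\mathcal S_{\CP^1})/F_1^{\CP^1} \xrightarrow{\sim} \ad(\mathcal P_{\CP^1})/F_1^{\CP^1}$. I would pull this back by $f_i$ and splice it with the maps of \eqref{tiré_arrière_atiyah}: the isomorphism $\iota$ descends to $\bar\iota : \ad(\mathcal P_X^D(\alpha))/F_1^{X,D}(\alpha) \xrightarrow{\sim} f_i^*\bigl(\ad(\mathcal P_{\CP^1})/F_1^{\CP^1}\bigr)$, since $\delta f_i$ carries $\mathcal S_X^D(\alpha)$ to $f_i^*\mathcal S_{\CP^1}$ and hence $F_1^{X,D}(\alpha)$ to $f_i^*F_1^{\CP^1}$, while $dF\circ\epsilon$, which sends $\At(\mathcal S_X^D(\alpha))$ into $f_i^*\At(\mathcal S_{\CP^1})$, descends to $\overline{dF\circ\epsilon} : \At(\mathcal S_X^D(\alpha))/F_1^{X,D}(\alpha) \to f_i^*\bigl(\At(\mathcal S_{\CP^1})/F_1^{\CP^1}\bigr)$. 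I would then define $\Phi_X^D(\alpha)$ over $U_i$ as $\bar\iota^{-1}\circ (f_i^*\Phi_{\CP^1})\circ \overline{dF\circ\epsilon}$, a holomorphic morphism that is an isomorphism over $U_i\cap\mathbb X$ because $dF\circ\epsilon$ is an isomorphism there.

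Finally, the gluing and the commutativity. Over $U_i\cap\mathbb X$ the map $f_i$ is a local biholomorphism, so the composite just defined agrees with the intrinsic construction of Proposition \ref{atiyah_1jet} — this is precisely the naturality recorded in remark \ref{intrinseque} and its branched counterpart in remark \ref{ident_restreinte} — and hence it coincides there with $\Phi_X^D(\alpha)|_{\mathbb X}$. Consequently the local pieces patch into a single holomorphic morphism on $X$; this also yields independence of the choice of $f_i$, as two choices agree on the dense set $U_i\cap\mathbb X$ and therefore everywhere, and it confirms that $\Phi_X^D(\alpha)$ is an isomorphism over $\mathbb X$. For the commutativity of \eqref{iso_atiyah_branché}, I would note that both composites $\At(\mathcal S_X^D(\alpha))/F_1^{X,D}(\alpha) \to T_X(D)$ are holomorphic bundle morphisms agreeing over $\mathbb X$ by the unbranched case, hence equal on all of $X$ by density.

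The main obstacle is the bookkeeping at the branching points: one must check that the degeneration of $\overline{dF\circ\epsilon}$ at $y_i$ is exactly the vanishing of $df_i$, so that the bottom arrow of \eqref{iso_atiyah_branché} is the inclusion $T_X\hookrightarrow T_X(D)$ under the identification $T_X(D)=\ad(\mathcal P_X^D(\alpha))/\ad(\mathcal S_X^D(\alpha))$ of Proposition \ref{tangent_branché}(ii), rather than an isomorphism. The pullback Atiyah sequence \eqref{tiré_arrière_atiyah}, whose right-hand vertical arrow is precisely $df$, is what makes this transparent and localizes the failure of $\Phi_X^D(\alpha)$ to be an isomorphism exactly at the support of $D$.
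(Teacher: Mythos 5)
Your construction is exactly the paper's: restrict to Proposition \ref{atiyah_1jet} over $\mathbb X$, and near each $y_i$ pull back the unbranched isomorphism on $f_i(U_i)\subset\CP^1$ through $\delta f_i$ and $dF\circ\epsilon$ from section \ref{tirés-arrière}, then glue over $U_i\cap\mathbb X$ and deduce commutativity of \eqref{iso_atiyah_branché} from \eqref{iso_atiyah} and \eqref{tiré_arrière_atiyah}. The argument is correct and matches the paper's proof in all essentials.
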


\begin{proof}
Proposition \ref{atiyah_1jet} gives an isomorphism $\Phi_{\mathbb X} : \At\left(\mathcal S_X^D(\alpha)\right)/F_1^{X, D}(\alpha)|_{\mathbb X} \xrightarrow{\sim} \linebreak[4] \ad\left(\mathcal P_X^D(\alpha)\right)/F_1^{X, D}(\alpha)|_{\mathbb X}$.

For each $i$ between $1$ and $r$, let $U_i$ be an open set such that $U_i \cap \mathbb X = y_i$.
Let $f_i : U_i \to \CP^1$ be a holomorphic map whose branching divisor is $n_i \cdot y_i$ and such that $j^{2(n_i+1)}_{y_i} f \in \alpha_i$.
According to proposition \ref{atiyah_1jet}, there exists a canonical isomorphism $\Phi_{f_i(U_i)} : \At\left(\mathcal S_{f_i(U_i)}\right)/F_1^{f_i(U_i)} \xrightarrow{\sim} \ad\left(\mathcal P_{f_i(U_i)}\right)/F_1^{f_i(U_i)}$.
Moreover, the isomorphism $\delta f_i : \mathcal P_X^D(\alpha)|_{U_i} \simeq f_i^* \mathcal P_{f_i(U_i)}$ induces an isomorphism $\ad\left(\mathcal P_X^D(\alpha)\right)/F_1^{X, D}(\alpha)|_{U_i} \simeq f_i^*\ad\left(\mathcal P_{f_i(U_i)}\right)/F_1^{f_i(U_i)}$, as well as the morphism $dF \circ \epsilon : \At\left(\mathcal S_X^D(\alpha)\right)/F_1^{X, D}(\alpha)|_{U_i} \to f_i^*\At\left(\mathcal S_{f_i(U_i)}\right)/F_1^{f_i(U_i)}$ of section \ref{tirés-arrière}.
Thus the pullback $f_i^* \Phi_{f(U_i)}$ induces a morphism $\tau_i : \At\left(\mathcal S_X^D(\alpha)\right)/F_1^{X, D}(\alpha)|_{U_i} \to \ad\left(\mathcal P_X^D(\alpha)\right)/F_1^{X, D}(\alpha)|_{U_i}$.
Since the morphism $dF \circ \epsilon$ is an isomorphism when restricted to $U_i \cap \mathbb X$, so is $\tau_i$.

The isomorphisms $\tau_i$ and $\Phi_{\mathbb X}$ coincide when restricted to $U_i \cap \mathbb X$.
In particular, the $\tau_i$'s and $\Phi_{\mathbb X}$ glue together to give the wanted morphism $\Phi_X^D(\alpha)$.
Diagram \eqref{iso_atiyah_branché} commutes because diagrams \eqref{iso_atiyah} and \eqref{tiré_arrière_atiyah} commute.
\end{proof}

The morphism $\Phi_X^D(\alpha)$ allows us to define the notion of a \emph{branched} projective connection on the bundle $\mathcal P_X^D(\alpha)$, similarly to what we did on the bundle of (unbranched) projective $2$-frames.
Let us first give a definition of a \emph{branched Cartan connection}

\begin{definition}
A \emph{branched Cartan connection} on a holomorphic bundle of $(G, Q)$-spaces $\mathcal Q$ on $X$ along with a reduction to a holomorphic bundle of $(G, Q, x_0)$-spaces $\widetilde Q$ is a morphism $\omega|_{\At\left(\widetilde Q\right)} : \At\left(\widetilde Q\right) \to \ad(\mathcal Q)$ that is an isomorphism in restriction to a Zariski open subset of $X$, and whose restriction to the subbundle $\ad\left(\widetilde Q\right) \subset \At\left(\widetilde Q\right)$ is the identity.
\end{definition}

In our framework, a branched Cartan connection for the bundles $\mathcal S_X^D(\alpha) \subset \mathcal P_X^D(\alpha)$ is a morphism $\omega : \At\left(\mathcal P_X^D(\alpha)\right) \to \ad\left(\mathcal P_X^D(\alpha)\right)$ that is a splitting of the following axact sequence~:
\begin{equation}\label{suite_cartan_branché}
0  \to \ad\left(\mathcal P_X^D(\alpha)\right) \to \At\left(\mathcal P_X^D(\alpha)\right) \to T_X \to 0
\end{equation}
and induces a morphism $\omega|_{\At\left(\mathcal S_X^D(\alpha)\right)} : \At\left(\mathcal S_X^D(\alpha)\right) \to \ad\left(\mathcal P_X^D(\alpha)\right)$ that is an isomorphism when restricted to a Zariski open subset of $X$. 
Such a connection $\omega$ induces the following commutative diagram (using the isomorphism $(ii)$ of proposition \ref{tangent_branché})~:
\begin{equation}\label{diag_connexion_proj_branchée_diviseur}
\begin{tikzcd} 0 \arrow[r] &\ad\left(\mathcal S_X^D(\alpha)\right) \arrow[d, equal] \arrow[r] & \At\left(\mathcal S_X^D(\alpha)\right) \arrow[d, "\omega|_{\At\left(\mathcal S_X^D(\alpha)\right)}"] \arrow[r] & T_X \arrow[d, "\theta"] \arrow[r] & 0 \\ 0 \arrow[r] & \ad\left(\mathcal S_X^D(\alpha)\right) \arrow [r] & \ad\left(\mathcal P_X^D(\alpha)\right) \arrow[r] & T_X(D) \arrow[r]& 0 \end{tikzcd}
\end{equation}
The \emph{branching divisor} of the connection $\omega$ is the vanishing divisor of the morphism $\theta$.

More precisely, a branched Cartan connection for $\mathcal S_X^D(\alpha) \subset \mathcal P_X^D(\alpha)$ gives rise to the following diagram, whose left square is commutative~:

\begin{equation}\label{diag_connexion_proj_branchée}
\begin{tikzcd} 0 \arrow[r] &F_1^{X, D}(\alpha) \arrow[d, equal] \arrow[r] \arrow[dr, phantom, "\circlearrowleft"] & \At\left(\mathcal S_X^D(\alpha)\right) \arrow[d, "\omega|_{\At\left(\mathcal S_X^D(\alpha)\right)}"] \arrow[r] & \At\left(\mathcal S_X^D(\alpha)\right)/F_1^{X, D}(\alpha) \arrow[d, "\Phi_X^D(\alpha)"] \arrow[r] & 0 \\ 0 \arrow[r] & F_1^{X, D}(\alpha) \arrow [r] & \ad\left(\mathcal P_X^D(\alpha)\right) \arrow[r] & \ad\left(\mathcal P_X^D(\alpha)\right) / F_1^{X, D}(\alpha) \arrow[r]& 0 \end{tikzcd}
\end{equation}

\begin{definition}
A \emph{branched projective connection} on $X$, of branching divisor $D$ and branching class $\alpha$ is a branched Cartan connection for $\mathcal S_X^D(\alpha) \subset \mathcal P_X^D(\alpha)$ such that the diagram \eqref{diag_connexion_proj_branchée} commutes.
\end{definition}

Let $\omega$ be a branched projective connexion on $X$, of branching divisor $D$ and branching class $\alpha$.
According to the diagram \eqref{iso_atiyah_branché}, the morphism $\theta$ in diagram \eqref{diag_connexion_proj_branchée_diviseur} is the embedding $T_X \hookrightarrow T_X(D)$.
Thus the branching divisor of $\omega$ as a branched Cartan connection is $D$, and the vocabulary is consistent.

\begin{remark}
The vector bundle $\ad\left(\mathcal P^D_X(\alpha)\right)$, along with the filtration $F_1^{X, D}(\alpha) \subset \ad\left(\mathcal S^D_X(\alpha)\right) \subset \ad\left(\mathcal P_X^D(\alpha)\right)$ and a branched projective connection, is a \emph{branched $\SO(3, \mathbb C)$-oper}, in the sense of \cite{biswas-dumitrescu}.
\end{remark}

\begin{remark}
The morphism $\Phi_X^D(\alpha)$ of proposition \ref{atiyah_1jet_branché}, when restricted to $\mathbb X$, is the isomorphism $\Phi_{\mathbb X}$ of proposition \ref{atiyah_1jet}.
This implies that a branched projective connection on $X$ is a projective connection when restricted to $\mathbb X$.
\end{remark}

The discussion before proposition \ref{structure_affine} can be held \emph{mutatis mutandis} in the case of branched projective connections on $X$, with branching divisor $D$ and branching class $\alpha$.
In particular, the difference between two such connections is given by a section of the vector bundle $\Hom\left(T_X, F^{X, D}_1(\alpha)\right)$.
Thanks to the identification $(i)$ in proposition \ref{tangent_branché}, proposition \ref{structure_affine} becomes in the branched case~:

\begin{proposition}\label{structure_affine_branchée}
The set of branched projective connections on $X$, of branching divisor $D$ and branching class $\alpha$, is either empty, or an affine space directed by the vector bundle $H^0\left(X, K_X^{\otimes 2}(-D)\right)$.
\end{proposition}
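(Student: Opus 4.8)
The plan is to run the argument preceding proposition \ref{structure_affine} \emph{mutatis mutandis}, replacing the line bundle $F_1^X$ by $F_1^{X, D}(\alpha)$ and the morphism $\Phi_X$ by $\Phi_X^D(\alpha)$, and then to transport the resulting $\Hom$-bundle through identification $(i)$ of proposition \ref{tangent_branché}. The only genuinely new feature to watch for is the degeneration of $\Phi_X^D(\alpha)$ at the branch points and the question of existence.

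First I would fix two branched projective connections $\omega_1, \omega_2$ sharing the branching divisor $D$ and branching class $\alpha$, and show that their difference takes values in $F_1^{X, D}(\alpha)$. Working pointwise over $x \in X$, I would decompose any $v \in \At\left(\mathcal P_X^D(\alpha)\right)_x$ as $v = v_1 + v_2$ with $v_1 \in \ad\left(\mathcal P_X^D(\alpha)\right)_x$ and $v_2 \in \At\left(\mathcal S_X^D(\alpha)\right)_x$, which is legitimate since these two subbundles generate the Atiyah bundle. As each $\omega_i$ is a splitting of \eqref{suite_cartan_branché}, it is the identity on $\ad\left(\mathcal P_X^D(\alpha)\right)$, so $\omega_i(v_1) = v_1$; and as each $\omega_i$ makes diagram \eqref{diag_connexion_proj_branchée} commute, the classes $[\omega_i(v_2)]$ modulo $F_1^{X, D}(\alpha)$ both equal $\Phi_X^D(\alpha)([v_2])$. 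Hence $(\omega_1 - \omega_2)(v) = (\omega_1 - \omega_2)(v_2) \in F_1^{X, D}(\alpha)$. The key observation is that this uses only the commutativity of \eqref{diag_connexion_proj_branchée}, not the invertibility of $\Phi_X^D(\alpha)$, so the conclusion persists at the branch points where $\Phi_X^D(\alpha)$ degenerates.

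Viewing each connection instead as a splitting morphism $\widetilde\omega_i : T_X \to \At\left(\mathcal P_X^D(\alpha)\right)$, the previous step identifies $\widetilde\omega_2 - \widetilde\omega_1$ with a section of $\Hom\left(T_X, F_1^{X, D}(\alpha)\right)$. Using identification $(i)$ of proposition \ref{tangent_branché}, namely $F_1^{X, D}(\alpha) \simeq K_X(-D)$, I would rewrite $\Hom\left(T_X, F_1^{X, D}(\alpha)\right) \simeq K_X \otimes K_X(-D) = K_X^{\otimes 2}(-D)$, so that the difference is a global section of $K_X^{\otimes 2}(-D)$. Conversely, for any $\varphi \in H^0\left(X, K_X^{\otimes 2}(-D)\right)$, I would check that adding the corresponding $\Hom\left(T_X, F_1^{X, D}(\alpha)\right)$-valued form to a branched projective connection $\widetilde\omega$ yields another one: it remains a splitting of \eqref{suite_cartan_branché} because $\varphi$ maps $T_X$ into $F_1^{X, D}(\alpha) \subset \ad\left(\mathcal P_X^D(\alpha)\right)$, so the associated retraction is still the identity on $\ad\left(\mathcal P_X^D(\alpha)\right)$; diagram \eqref{diag_connexion_proj_branchée} still commutes because the perturbation is absorbed modulo $F_1^{X, D}(\alpha)$; and the induced map on the quotient is unchanged, hence still an isomorphism over $\mathbb X$, so the branched Cartan connection condition of being an isomorphism on a Zariski open subset survives.

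Combining the two directions exhibits the set of branched projective connections with fixed $D$ and $\alpha$ as a torsor under $H^0\left(X, K_X^{\otimes 2}(-D)\right)$, that is, an affine space directed by this vector space. I expect the principal subtlety to be conceptual rather than computational: unlike an unbranched projective structure, which always exists, there is no a priori reason that a prescribed branching class $\alpha \in A_X^D$ is realized by any branched projective connection, and the argument above only describes the affine structure once nonemptiness is granted. This is exactly why the statement must allow the \emph{empty} alternative, and recognizing that existence has to be left open here, while the degeneration of $\Phi_X^D(\alpha)$ at branch points is harmless for the affine-space reasoning, is the heart of the matter.
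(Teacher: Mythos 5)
Your proposal is correct and follows essentially the same route as the paper, which simply invokes the discussion preceding proposition \ref{structure_affine} \emph{mutatis mutandis} and then applies identification $(i)$ of proposition \ref{tangent_branché} to obtain $\Hom\left(T_X, F_1^{X, D}(\alpha)\right) \simeq K_X^{\otimes 2}(-D)$. Your added observations --- that only the commutativity of diagram \eqref{diag_connexion_proj_branchée}, not the invertibility of $\Phi_X^D(\alpha)$, is needed at the branch points, and that nonemptiness is genuinely left open --- are exactly the points the paper leaves implicit.
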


\subsection{Branched Projective Structures}\label{struc_proj_branchées}

In this section, we make the link between the classical notion of branched projective structure and the notion of branched projective connection introduced in the previous section.

Let $D$ be a divisor on $X$, like in \eqref{diviseur}.

\begin{definition}
A \emph{branched projective atlas} on $X$ with branching divisor $D$ is the datum of an open covering $(U_i)_{i \in I}$ and nonconstant holomorphic maps $z_i : U_i \to \CP^1$ whose branching divisor is $D|_{U_i}$ and such that for any $i, j \in I$, there exists a Möbius transformation $g_{ij} \in G$ such that $z_i = g_{ij} \circ z_j$ over $U_i \cap U_j$.

Two branched projective atlases of divisor $D$ are said to be equivalent if their union is a branched projective atlas of divisor $D$.
A \emph{branched projective structure} of divisor $D$ is an equivalence class of branched projective atlases of divisor $D$.
\end{definition}

For an overview on the theory of branched projective structures, see \cite{gallo-kapovich-marden} and references therein.

To a branched projective structure on $X$ of divisor $D$ and maximal atlas $(U_i, z_i)_{i \in I}$, one can associate a branching class $\alpha = (\alpha_1, \dots, \alpha_r) \in A_X^D$ by setting $\alpha_k = j^{2(n_k+1)}_{y_k}z_i \mod G$, where $i \in I$ is such that $y_k \in U_i$.

As we saw in section \ref{gunning_branché}, for any $i \in I$, the map $z_i$ defines a section and thus a principal connection for the bundle $\mathcal P_X^D(\alpha)|_{U_i}$.
Since the $z_i$'s differ by composition with an element in $G$, the principal connections they define coincide on the $U_i \cap U_j$'s and thus define a principal connection $\omega$ on $\mathcal P_X^D(\alpha)$.
When restricted to $\mathbb X$, $\omega$ is unbranched and thus makes diagram \eqref{diag_connexion_proj_branchée} commute, as we saw in section \ref{subsection_connexions_projectives_structures_projectives}.
Moreover, if $y_k$ ($1 \le k \le r$) is any branching point of the atlas $(U_i, z_i)$ and $i_0 \in I$ is such that $y_k \in U_{i_0}$, then $\omega|_{U_{i_0}}$ can be seen as the pullback by $z_{i_0}$ of the projective connection on $\CP^1$ corresponding to the trivial projective structure on $\CP^1$.
In the proof of theorem \ref{atiyah_1jet_branché}, the map $\Phi_X^D(\alpha)|_{U_{i_0}}$ was obtained as the pullback of $\Phi_{\CP^1}$.
Moreover, the tautological projective connection on $\CP^1$ makes the diagram \eqref{diag_connexion_proj} commute.
Thus $\omega$ makes the diagram \eqref{diag_connexion_proj_branchée} commute when restricted to $U_{i_0}$.
This shows the following lemma~:

\begin{lemma}\label{struct_branch_connexion}
A branched projective structure induces a branched projective connection, with the same branching divisor and branching class.
\end{lemma}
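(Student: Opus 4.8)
The plan is to take the maximal branched projective atlas $(U_i, z_i)_{i \in I}$ and build the desired branched Cartan connection directly, then verify that diagram \eqref{diag_connexion_proj_branchée} commutes chart by chart. First I would use that each nonconstant chart $z_i$, whose branching divisor is $D|_{U_i}$ and whose jets at the branch points $y_k \in U_i$ represent the class $\alpha$, defines a holomorphic section of $\mathcal P_X^D(\alpha)|_{U_i}$ — namely $x \mapsto j^2_x z_i$ over $\mathbb X$ and $x \mapsto j^{2(n_k+1)}_{y_k} z_i$ at each $y_k$. Declaring this section to be flat determines a local principal connection $\omega_i$, that is, a local splitting of the Atiyah sequence \eqref{suite_cartan_branché}.

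The gluing is the next step. On each overlap $U_i \cap U_j$ one has $z_i = g_{ij} \circ z_j$ with $g_{ij} \in G$ constant, so the two local flat sections differ precisely by the constant left $G$-action on $\mathcal P_X^D(\alpha)$. Since this action is the gauge freedom of the principal bundle, the associated horizontal distributions agree over $U_i \cap U_j$, and the $\omega_i$ glue to a global principal connection $\omega$ on $\mathcal P_X^D(\alpha)$. Because each $z_i$ is an honest branched chart, the induced $\omega|_{\At\left(\mathcal S_X^D(\alpha)\right)}$ maps $\At\left(\mathcal S_X^D(\alpha)\right)$ into $\ad\left(\mathcal P_X^D(\alpha)\right)$ and is an isomorphism over $\mathbb X$, so $\omega$ is at least a branched Cartan connection for $\mathcal S_X^D(\alpha) \subset \mathcal P_X^D(\alpha)$.

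It then remains to check that $\omega$ is genuinely a branched \emph{projective} connection, i.e. that the right-hand square of \eqref{diag_connexion_proj_branchée} involving $\Phi_X^D(\alpha)$ commutes. Over $\mathbb X$ this is exactly the unbranched statement of section \ref{subsection_connexions_projectives_structures_projectives}, since $\omega|_{\mathbb X}$ is the projective connection attached to the restricted (unbranched) projective structure. At a branch point $y_k$, with $y_k \in U_{i_0}$, the key observation is that $\omega|_{U_{i_0}}$ is the pullback under $z_{i_0}$ of the tautological projective connection on $\CP^1$, whose corresponding diagram \eqref{diag_connexion_proj} commutes by construction. Since $\Phi_X^D(\alpha)|_{U_{i_0}}$ was itself produced in the proof of proposition \ref{atiyah_1jet_branché} as the pullback $z_{i_0}^* \Phi_{\CP^1}$ along the maps $\delta z_{i_0}$ and $dF \circ \epsilon$ of section \ref{tirés-arrière}, the commuting square over $\CP^1$ pulls back to a commuting square over $U_{i_0}$.

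The main obstacle I expect is precisely this last verification at the branch points: one must ensure that the pullback of the Atiyah splitting under the ramified map $z_{i_0}$ is compatible with the pullback construction of $\Phi_X^D(\alpha)$, i.e. that both pullback operations — on the connection and on $\Phi$ — are taken along the \emph{same} maps $\delta z_{i_0}$, $\epsilon$ and $dF$. This is where the naturality encoded in diagram \eqref{tiré_arrière_atiyah} of section \ref{tirés-arrière}, together with the explicit pullback description of $\Phi_X^D(\alpha)$, must be invoked with care; once that compatibility is established, the commutativity of \eqref{diag_connexion_proj_branchée} over each $U_{i_0}$ is formal, and combining it with the already-verified behaviour over $\mathbb X$ yields global commutativity, so that $\omega$ is a branched projective connection with branching divisor $D$ and branching class $\alpha$.
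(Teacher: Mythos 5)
Your proposal is correct and follows essentially the same route as the paper: local principal connections defined by declaring the chart sections $j^2 z_i$ (resp. $j^{2(n_k+1)}_{y_k} z_i$) flat, glued via the constant transition maps $g_{ij} \in G$, with commutativity of \eqref{diag_connexion_proj_branchée} checked over $\mathbb X$ by the unbranched theory and at the branch points by viewing both $\omega|_{U_{i_0}}$ and $\Phi_X^D(\alpha)|_{U_{i_0}}$ as pullbacks along $z_{i_0}$ of the corresponding objects on $\CP^1$. The compatibility point you flag at the end is exactly the content the paper relies on from the proof of proposition \ref{atiyah_1jet_branché} and section \ref{tirés-arrière}.
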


\subsection{Branched Projective Structures, Branching Classes and Schwarzian Derivative}

Let us recall the following result, that follows from Fuchs's local theory, and that plays a central role in the theory of branched projective structures. See \cite{saint-gervais} for more details.

Let $n \ge 2$ be an integer, $z_0 \in \mathbb C$ and $U \subset \mathbb C$ be a simply connected open neighborhood of $z_0$.

\begin{proposition}\label{fuchs_locale}
There exists a polynomial $P_n \in \mathbb C[X_1, \dots, X_{n+1}]$ such that, for any $\phi = \varphi dz^{\otimes 2}$ holomorphic quadratic differential on $U \backslash \{z_0\}$, (A) and (B) are equivalent~:

\begin{itemize}
\item[(A)] There exists a holomorphic function $f$ defined on $U$ whose branching divisor is exactly $n \cdot z_0$ and such that $\varphi = \mathcal S(f)$ on $U \backslash \{z_0\}$
\item[(B)] The following are all true
\begin{itemize}
\item[(i)] The quadratic differential $\phi$ extends as a meromorphic quadratic differential on $U$ that admits a pole of order $2$ at $z_0$~: $\varphi = \frac{\alpha_{-2}}{(z-z_0)^2} + \frac{\alpha_{-1}}{z-z_0} + \alpha_0 + \alpha_1(z - z_0) + \alpha_2(z-z_0)^2 + \cdots$ \item[(ii)] $\alpha_{-2} = \frac{1-(n+1)^2}2$ \item[(iii)] $P_n(\alpha_{-1}, \alpha_0, \dots, \alpha_{n-1}) = 0$
\end{itemize}
\end{itemize}
Moreover, the polynomial $P_n$ is given by $P_n = \lambda X_{n+1} + \widetilde P(X_1, \dots X_n)$ for some $\lambda \in \mathbb C^*$ and $\widetilde P \in \mathbb C[X_1, \dots, X_n]$.
\end{proposition}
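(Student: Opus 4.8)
The plan is to pass, via the classical correspondence between the Schwarzian equation and a second-order linear ODE, to Fuchs's theory at a regular singular point, and to extract (i)--(iii) from the indicial equation together with the Frobenius recursion at a resonance. Setting $t = z - z_0$, I recall that the ratio $f = u_1/u_2$ of two independent solutions of $u'' + \tfrac12 \varphi\, u = 0$ satisfies $\mathcal S(f) = \varphi$, and that conversely every solution of $\mathcal S(f) = \varphi$ is of this form, unique up to postcomposition by $\PGL(2,\C)$ (which leaves $\mathcal S$ invariant, so one may assume $f(z_0)\neq\infty$). Under this dictionary, (A) says that the pair $(u_1,u_2)$ has trivial projective monodromy around $z_0$ and local exponents differing by $n+1$. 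For the implication (A) $\Rightarrow$ (B) I would start from the local normal form $f = f(z_0) + c\, t^{n+1}(1 + O(t))$, $c\neq 0$, and compute $\mathcal S(f)$: from $f''/f' = n/t + O(1)$ one gets $\mathcal S(f) = -\tfrac{n(n+2)}{2}\,t^{-2} + O(t^{-1})$, a double pole with coefficient $\tfrac{1-(n+1)^2}{2}$, giving (i) and (ii) simultaneously. Condition (iii) will be forced by single-valuedness, as explained at the end.

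For (B) $\Rightarrow$ (A) and the construction of $P_n$, I analyse $u'' + \tfrac12\varphi\, u = 0$ with $\varphi = \alpha_{-2}t^{-2} + \alpha_{-1}t^{-1} + \cdots$ at the regular singular point $z_0$. Its indicial polynomial is $I(\rho) = \rho(\rho-1) + \tfrac{\alpha_{-2}}{2}$, with roots $\rho_{1,2} = \tfrac{1 \pm \sqrt{1-2\alpha_{-2}}}{2}$ differing by $\sqrt{1-2\alpha_{-2}}$; since the branching order of $f = u_1/u_2$ equals this exponent difference, requiring it to be $n+1$ is the equation $1 - 2\alpha_{-2} = (n+1)^2$, i.e.\ (ii), after which $\rho_1 - \rho_2 = n+1 \in \N^*$. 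This integer difference is the \emph{resonant} case of the Frobenius method, and it is where the essential work lies. Seeking the small-exponent solution $u = t^{\rho_2}\sum_{j\ge 0} a_j t^j$ with $a_0 = 1$, substitution yields the recursion
\begin{equation*}
I(\rho_2 + j)\, a_j + \tfrac12\sum_{k=-1}^{j-2}\alpha_k\, a_{j-2-k} = 0,
\qquad I(\rho_2+j) = j\bigl(j-(n+1)\bigr).
\end{equation*}
For $1 \le j \le n$ the factor $I(\rho_2+j)$ is nonzero, so the $a_j$ are determined, each a universal polynomial in $\alpha_{-1},\dots,\alpha_{j-2}$; at the resonant index $j = n+1$ the factor $I(\rho_2+n+1)$ vanishes and the recursion collapses to the compatibility condition $\sum_{k=-1}^{n-1}\alpha_k\, a_{n-1-k} = 0$. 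I would define $P_n$ to be this left-hand side, read as a polynomial in $(\alpha_{-1},\dots,\alpha_{n-1}) = (X_1,\dots,X_{n+1})$.

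The claimed shape of $P_n$ and the closing of the equivalence then follow. The term $k = n-1$ contributes $\alpha_{n-1} a_0 = \alpha_{n-1} = X_{n+1}$ linearly, whereas each remaining term $\alpha_k a_{n-1-k}$ has $n-1-k \ge 1$ and therefore involves only $\alpha_{-1},\dots,\alpha_{n-2} = X_1,\dots,X_n$; this yields $P_n = \lambda X_{n+1} + \widetilde P(X_1,\dots,X_n)$ with $\lambda \neq 0$. Vanishing of $P_n$ is exactly the condition that a logarithm-free second solution exists, so that the local monodromy is diagonalizable with eigenvalue ratio $e^{2\pi i(n+1)} = 1$; then $(u_1,u_2)$ has trivial projective monodromy and $f = u_1/u_2 \sim t^{n+1}$ is a single-valued holomorphic branched covering of order exactly $n$ with $\mathcal S(f) = \varphi$, which is (A). Conversely, a logarithmic term produces a nontrivial parabolic monodromy on $f$ and hence multivaluedness, which is why (iii) is necessary in (A) $\Rightarrow$ (B). The main obstacle is precisely this bookkeeping at the resonant index: checking that the obstruction to a logarithm-free solution is the displayed compatibility sum and that it is affine-linear in the top coefficient $\alpha_{n-1}$ with nonzero coefficient.
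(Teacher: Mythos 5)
Your proof is correct, and it follows exactly the route the paper itself points to: the paper states this proposition without proof, recalling only that it ``follows from Fuchs's local theory'' and deferring to \cite{saint-gervais}, while you carry out that argument in full --- passing to the companion equation $u'' + \tfrac12\varphi u = 0$, reading off (ii) from the indicial roots, and identifying $P_n$ with the compatibility condition at the resonant index $j = n+1$ of the Frobenius recursion, whose affine-linear dependence on $\alpha_{n-1}$ with unit coefficient gives the claimed form $\lambda X_{n+1} + \widetilde P(X_1,\dots,X_n)$. The one point worth making explicit is that in (B)$\Rightarrow$(A) the branching divisor is \emph{exactly} $n\cdot z_0$ on all of $U$ because $f' = W(u_1,u_2)/u_2^2$ with constant nonzero Wronskian, so $f$ is an immersion away from $z_0$; otherwise the argument is complete.
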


\begin{remark}\label{surj_diffquadmero}
In particular, proposition \ref{fuchs_locale} implies that given $\alpha_{-1}, \alpha_0, \dots, \alpha_{n-2} \in \mathbb C$, there exists a function $f$ defined on $U$ whose branching divisor is $n \cdot z_0$ and such that $\mathcal S(f) = \frac{1-(n+1)^2}{2(z-z_0)^2} + \frac{\alpha_{-1}}{z-z_0} + \alpha_0 + \alpha_1(z-z_0) + \cdots + \alpha_{n-2}(z-z_0)^{n-2} + \mathcal O(z-z_0)^{n-1}$.
Indeed, it is enough to solve the equation $\mathcal S(f) = \frac{1-(n+1)^2}{2(z-z_0)^2} + \frac{\alpha_{-1}}{z-z_0} + \alpha_0 + \alpha_1(z-z_0) + \cdots + \alpha_{n-2}(z-z_0)^{n-2} + \left(-\frac{\widetilde P(\alpha_{-1}, \dots, \alpha_{n-2})} \lambda\right)(z-z_0)^{n-1}$.
\end{remark}

We are now able to prove the converse of \ref{struct_branch_connexion}.
Take a branched projective connection $\omega$, of divisor $D$ and branching class $\alpha$.
The connection $\omega$ is a projective connection when restricted to $\mathbb X$, and thus comes from a projective structure $p$ on $\mathbb X$ according to corollary \ref{coro_equiv_conn_struct}.
For $i$ between $1$ and $r$, let $U_i$ be a neighborhood of $y_i$ and $f_i : U_i \to \CP^1$ a map with branching divisor $n_i \cdot y_i$ and such that $j^{2(n_i+1)}_{y_i} f_i \in \alpha_i$. 
We have seen that $f_i$ defines a branched projective connection on $U_i$ with the same branching class as $\omega$.
In particular, the difference $\omega - \omega_i$ is a quadratic differential $\phi_i \in \Gamma\left(U_i, K_X^{\otimes 2}(-n_i \cdot y_i)\right)$.
Fix an unbranched projective connection $\omega_0$ on $X$. According to corollary \ref{coro_equiv_conn_struct}, $\omega_0$ is given by a projective structure on $X$.
On $\mathbb X$, the difference $\omega - \omega_0$ is a quadratic differential $\psi \in \Gamma(\mathbb X, K_X^{\otimes 2})$ while on $U_i \cap \mathbb X$, the difference $\omega_i - \omega_0$ is a quadratic differential $\psi_i \in \Gamma(U_i \cap \mathbb X, K_X^{\otimes 2})$.
Moreover, on $U_i \cap \mathbb X$, we have $\psi = \phi_i + \psi_i$.
Since $\phi_i$ vanishes up to order $(n_i-1)$ at $y_i$, the Laurent expansions at $y_i$ of $\psi$ and $\psi_i$ in a local chart $z_i$ of the projective structure $\omega_0$ coincide up to order $n_i-1$.
According to proposition $\ref{fuchs_locale}$, there exists a map $h_i$ defined on $U_i$, whose branching divisor is $n_i \cdot y_i$ and such that $\{h_i,z_i\}dz_i^{\otimes 2} = \psi$.
In particular the projective connection induced by $h_i$ on $U_i \cap \mathbb X$ coincides with $\omega$. 
Moreover lemma \ref{bialgébriques} shows that $h_i$ has same branching class $y_i$ as $f_i$, thus $h_i$ defines a branched projective connection with branching class $\alpha_i$, equal to $\omega$ on $U_i$ by analytic continuation.
Thus when restricted to $U_i$, $\omega$ comes from a branched projective structure $b_i$ with branching class $\alpha_i$.
The branched projective structures $b_i$ glue with the unbranched structure $p$ on $U_i \cap \mathbb X$ and thus define a branched projective structure on the whole $X$ whose associated projective connection is $\omega$.

This discussion shows the following proposition~:

\begin{proposition}
The datum of a branched projective structure on $X$ with branching divisor $D$ and branching class $\alpha$ is equivalent to the datum of a branched projective connection on $X$, of divisor $D$ and branching class $\alpha$.
\end{proposition}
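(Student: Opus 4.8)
The statement is an equivalence, so the plan is to treat the two directions separately. One direction---that a branched projective structure of divisor $D$ and class $\alpha$ induces a branched projective connection with the same divisor and class---is exactly Lemma \ref{struct_branch_connexion}, already established. Hence the work lies entirely in the converse: starting from a branched projective connection $\omega$ of divisor $D$ and branching class $\alpha$, I would reconstruct a branched projective structure inducing it.

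The first step is to handle the unbranched locus. Since $\omega$ restricts to an honest projective connection on $\mathbb X = X \setminus \mathbb D$ (the remark following proposition \ref{atiyah_1jet_branché}), Corollary \ref{coro_equiv_conn_struct} provides a projective structure $p$ on $\mathbb X$ realizing it, and it remains to extend $p$ across each $y_i$. To do so I would fix once and for all an unbranched projective connection $\omega_0$ on $X$ (coming from a global projective structure by Corollary \ref{coro_equiv_conn_struct}) and encode $\omega$ by the quadratic differential $\psi = \omega - \omega_0$ on $\mathbb X$. Near $y_i$, any germ $f_i$ with branching divisor $n_i\cdot y_i$ and $j^{2(n_i+1)}_{y_i} f_i \in \alpha_i$ defines a local branched projective connection $\omega_i$ of class $\alpha_i$; by Proposition \ref{structure_affine_branchée} the difference $\omega - \omega_i$ is a section of $K_X^{\otimes 2}(-n_i\cdot y_i)$, so in a chart $z_i$ adapted to $\omega_0$ the Laurent expansions of $\psi$ and of $\psi_i = \omega_i - \omega_0$ agree up to order $n_i-1$. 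Fuchs's local theory, namely Proposition \ref{fuchs_locale} together with Remark \ref{surj_diffquadmero}, then yields a holomorphic map $h_i$ on a neighborhood $U_i$ of $y_i$, with branching divisor exactly $n_i\cdot y_i$, solving $\{h_i, z_i\}\,dz_i^{\otimes 2} = \psi$; by Proposition \ref{lien_derivee_schwarzienne} the branched projective connection attached to $h_i$ agrees with $\omega$ on $U_i\cap\mathbb X$, hence on all of $U_i$ by analytic continuation.

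The crux of the argument---and the step I expect to be the main obstacle---is to confirm that the map $h_i$ furnished by Fuchs's theory carries not merely the correct branching order $n_i$ but exactly the prescribed branching class $\alpha_i$. This is delicate because the branching class ranges over the $n_i$-dimensional space $G\backslash R_{y_i, n_i}$ of Proposition \ref{action_G_R}, whereas solving the Schwarzian equation only pins down the quadratic differential $\psi$; one must therefore show that the branching class is completely determined by the Laurent coefficients of the Schwarzian up to the relevant order, which is the content of Lemma \ref{bialgébriques}. Once the classes are matched, the local branched projective structures $b_i$ defined by the $h_i$ glue with $p$ along the punctured neighborhoods $U_i\cap\mathbb X$, producing a global branched projective structure on $X$ of divisor $D$ and class $\alpha$ whose associated connection is $\omega$. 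This establishes the converse and completes the equivalence.
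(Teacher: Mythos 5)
Your proposal is correct and follows essentially the same route as the paper's proof: one direction is Lemma \ref{struct_branch_connexion}, and the converse proceeds by restricting to $\mathbb X$ via Corollary \ref{coro_equiv_conn_struct}, comparing Laurent expansions of $\omega-\omega_0$ and $\omega_i-\omega_0$ near each $y_i$, solving the Schwarzian equation via Proposition \ref{fuchs_locale}, and matching branching classes through Lemma \ref{bialgébriques} before gluing. You have also correctly identified the step the paper leans on most, namely that the branching class is pinned down by the low-order Laurent coefficients of the Schwarzian.
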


\section{Spaces of Bundles of Branched Projective Frames}\label{section_spaces_bpf}

In this section we investigate the analytic structure of the space $A_X^D$ of bundles of branched projective frames for a given curve $X$ with a given divisor $D$.
The associated $\CP^1$-bundles with section, namely the branched projective osculating lines with their canonical sections can be seen as bundles of affine lines, thus there is a map from $A_X^D$ to the moduli space of affine bundles over $X$, that we study.

\subsection{Parametrizations of the Space of Bundles of Branched Projective Frames on a Riemann Surface with Divisors}

Let us work again with the notations of proposition \ref{fuchs_locale}.
Take also a holomorphic function $f$ on $U$, such that $f'$ has a zero of order $n$ at $z_0$.
Fix the following notations~:

$$
\begin{aligned}
f(z) &= a_0 + a_{n+1}(z-z_0)^{n+1} + a_{n+2}(z-z_0)^{n+2} + \cdots + a_{2n+1}(z-z_0)^{n+1} + \mathcal O(z-z_0)^{2n+2}\\
\frac{f''(z)}{f'(z)} &= \frac{n+1}{z-z_0} + \delta_0 + \delta_1(z-z_0) + \cdots + \delta_{n-1}(z-z_0)^{n-1} + \mathcal O(z-z_0)^n\\
\mathcal S(f)(z) &= \frac{1-(n+1)^2}{2(z-z_0)^2} + \frac{\alpha_{-1}}{(z-z_0)} + \cdots + \alpha_{n-1}(z-z_0)^{n-1} + \mathcal O(z-z_0)^n
\end{aligned}
$$

A straightforward computation shows~:

\begin{lemma}\label{bialgébriques}
There are two algebraic automorphisms $D_n : \mathbb C^n \xrightarrow{\sim} \mathbb C^n$ and $S_n : \mathbb C^n \xrightarrow{\sim} \mathbb C^n$ such that for any function $f$, one has $$(\delta_0, \dots, \delta_{n-1}) = D_n \left(\frac{a_{n+2}}{a_{n+1}}, \frac{a_{n+3}}{a_{n+1}}, \dots, \frac{a_{2n+1}}{a_{n+1}}\right)$$ and $$(\alpha_{-1}, \dots, \alpha_{n-2}) = S_n \left(\frac{a_{n+2}}{a_{n+1}}, \frac{a_{n+3}}{a_{n+1}}, \dots, \frac{a_{2n+1}}{a_{n+1}}\right)$$
\end{lemma}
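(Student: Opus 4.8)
The plan is to push everything through the ratios $r_k := a_{n+k}/a_{n+1}$ for $2 \le k \le n+1$, which are precisely the coordinates of the source $\mathbb{C}^n$ (these are the coordinates used in Proposition \ref{action_G_R}). First I would observe that neither $f''/f'$ nor $\mathcal S(f)$ depends on $a_0$ or on the overall scaling $f \mapsto \lambda f$: indeed $f''/f' = (\log f')'$ is unchanged when $f'$ is multiplied by a nonzero constant, and $\mathcal S(f)$ is built only from $f''/f'$ and its derivative. Since rescaling $f$ multiplies every $a_k$ ($k \ge n+1$) by the same constant, all the $\delta_j$ and $\alpha_j$ are functions of the $r_k$ alone, which is what makes the two maps in the statement well defined on $\mathbb{C}^n$.

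Next, writing $w = z - z_0$ and factoring $f'(w) = (n+1)a_{n+1}w^n\,g(w)$ with $g(w) = 1 + \sum_{k=2}^{n+1}\frac{n+k}{n+1}\,r_k\,w^{k-1} + \mathcal O(w^{n+1})$, one gets $\frac{f''}{f'} = \frac{n}{w} + \frac{g'}{g}$. As $g(0)=1$, the logarithmic derivative $g'/g$ is holomorphic at $0$, and its coefficient of $w^{j}$ equals $(j+1)c_{j+1}$ plus a polynomial in the lower coefficients $c_1,\dots,c_j$ of $g$, where $c_m = \frac{n+m+1}{n+1}\,r_{m+1}$. Hence
\[
\delta_j = (j+1)\,\frac{n+j+2}{n+1}\,r_{j+2} + P_j(r_2,\dots,r_{j+1}).
\]
This exhibits $(r_2,\dots,r_{n+1}) \mapsto (\delta_0,\dots,\delta_{n-1})$ as a triangular polynomial map whose diagonal coefficients $(j+1)\frac{n+j+2}{n+1}$ are all nonzero; such a map is a polynomial automorphism, its inverse being computed by back-substitution. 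This map is $D_n$.

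For $S_n$ I would substitute $\frac{f''}{f'} = \frac{n}{w} + h$ with $h = \sum_{j \ge 0}\delta_j w^j$ into $\mathcal S(f) = (f''/f')' - \tfrac12 (f''/f')^2$, obtaining
\[
\mathcal S(f) = -\frac{n^2+2n}{2w^2} - \frac{n\,\delta_0}{w} + \sum_{j \ge 0}\left[(j+1-n)\,\delta_{j+1} - \tfrac12\sum_{a+b=j}\delta_a\delta_b\right]w^j,
\]
which confirms the leading coefficient $\tfrac{1-(n+1)^2}{2}$ and yields $\alpha_{-1} = -n\,\delta_0$ together with $\alpha_j = (j+1-n)\,\delta_{j+1} - \tfrac12\sum_{a+b=j}\delta_a\delta_b$ for $j \ge 0$. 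The diagonal coefficients $-n,\,1-n,\,\dots,\,-1$ attached to $\delta_0,\dots,\delta_{n-1}$ are all nonzero (using $n \ge 2$), so $(\delta_0,\dots,\delta_{n-1}) \mapsto (\alpha_{-1},\dots,\alpha_{n-2})$ is again a triangular polynomial automorphism; composing it with $D_n$ produces $S_n$.

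The computation itself is elementary, so the only real point to watch is triangularity together with the nonvanishing of the diagonal coefficients, which is exactly what upgrades the two polynomial maps from mere bijections to genuine algebraic automorphisms of $\mathbb{C}^n$. The remaining bookkeeping is to check that the coefficients actually needed, namely $\delta_0,\dots,\delta_{n-1}$ and $\alpha_{-1},\dots,\alpha_{n-2}$, involve only $a_{n+1},\dots,a_{2n+1}$, so that the stated truncation of $f$ (up to order $2n+1$) indeed suffices to determine both sides.
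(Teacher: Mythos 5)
Your computation is correct and is precisely the ``straightforward computation'' that the paper leaves to the reader: the key point is exactly the one you isolate, namely that both maps are triangular with nonzero diagonal coefficients $(j+1)\tfrac{n+j+2}{n+1}$ and $j+1-n$, which makes them polynomial automorphisms invertible by back-substitution. The only discrepancy is that your (correct) principal part $\tfrac{n}{z-z_0}$ of $f''/f'$ differs from the $\tfrac{n+1}{z-z_0}$ displayed in the paper, which appears to be a typo there, since the paper itself later asserts that the residue of $[f_i,z_i]$ at $y_i$ is $n$.
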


Let $X$ be a Riemann surface.
Recall from section \ref{subsection_affine_structures} that similarly to the case of projective structures, the space of holomorphic affine structures on an open set $U \subset X$ is an affine space directed by the space $\Gamma(U, K_X)$ of holomorphic differentials on $U$.
If $z_1, z_2$ are two coordinates on $U$ with $z_2 = f(z_1)$, the difference between the affine structures given by $z_2$ and $z_1$ is given by $[z_2, z_1]dz_1$, where $[z_2, z_1] = \frac{f''}{f'}$.

Take $D$ an effective divisor on $X$, like in $\eqref{diviseur}$.
Let $\alpha, \alpha' \in A_X^D$ be two branching classes.
Take also, for any $i \in [[1, r]]$, $f_i$ (respectively $g_i$) a holomorphic function defined on a neighborhood $U_i$ of $y_i$, branched to order $n_i$ at $y_i$ (and nowhere else) and whose branching class at $y_i$ is $\alpha_i$ (respectively $\alpha_i'$).
The difference between the two projective (respectively affine) structures defined by $g_i$ and $f_i$ on $U_i \backslash \{x_0\}$ is given by the quadratic differential $\{g_i, f_i\}df_i^{\otimes 2}$ (respectively by the differential $[g_i, f_i]df_i$).

\begin{lemma}
\begin{itemize}
\item[(i)] The quadratic differential $\{g_i, f_i\}df_i^{\otimes 2}$ extends to a section in $\Gamma\left(U_i, K_X^{\otimes 2}(y_i)\right)$. The differential $[g_i, f_i]df_i$ extends to a section in $\Gamma(U_i, K_X)$
\item[(ii)] The $(n_i-1)$-jet of $\{g_i, f_i\}df_i^{\otimes 2}$ at $y_i$ (respectively the $(n_i-1)$-jet of $[g_i, f_i]df_i$) does not depend on the choice of $f_i$ and $g_i$, of respective branching classes $\alpha$ and $\alpha'$.
\end{itemize}
\end{lemma}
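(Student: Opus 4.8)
The plan is to reduce both assertions to the Laurent expansions at $y_i$ of the Schwarzian derivative and of the logarithmic derivative $f''/f'$, exploiting that $f_i$ and $g_i$ share the same branching order $n_i$, so that their most singular terms agree and cancel in the relevant differences. Throughout I fix a local coordinate $z$ on $U_i$ centered at $y_i$ (and take $\C$-valued representatives, which is harmless for the affine comparison).

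For part $(i)$, I would use the cocycle formula for the Schwarzian, the differential-form version of \eqref{der_schw_composee}, with $z_1 = z$, $z_2 = f_i$, $z_3 = g_i$, to obtain
\begin{equation*}
\{g_i, f_i\}\,df_i^{\otimes 2} = \big(\mathcal S(g_i) - \mathcal S(f_i)\big)\,dz^{\otimes 2},
\end{equation*}
where $\mathcal S(f_i) = \{f_i, z\}$ and likewise for $g_i$. By proposition \ref{fuchs_locale}, both $\mathcal S(f_i)$ and $\mathcal S(g_i)$ have a double pole at $y_i$ with the same leading coefficient $\tfrac{1-(n_i+1)^2}{2}$, which depends only on $n_i$; hence their difference has at worst a simple pole, so $\{g_i, f_i\}\,df_i^{\otimes 2}$ extends to a holomorphic section of $K_X^{\otimes 2}(y_i)$. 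For the affine statement I would argue identically with the analogous cocycle $[g_i, f_i]\,df_i = \big(\tfrac{g_i''}{g_i'} - \tfrac{f_i''}{f_i'}\big)\,dz$, noting that both logarithmic derivatives have a simple pole at $y_i$ with the same residue (again fixed by $n_i$), so that the difference is holomorphic and defines a section of $K_X$.

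For part $(ii)$, I would trivialize $K_X^{\otimes 2}(y_i)$ by $z^{-1}dz^{\otimes 2}$ and $K_X$ by $dz$, so that the two $(n_i-1)$-jets are read off directly from Laurent coefficients. Writing $\Psi(z) = z\big(\mathcal S(g_i) - \mathcal S(f_i)\big)$, the $(n_i-1)$-jet of the quadratic differential is encoded in the coefficients $\alpha_{-1}, \dots, \alpha_{n_i-2}$ of $\mathcal S(g_i) - \mathcal S(f_i)$, while the $(n_i-1)$-jet of the differential is encoded in the coefficients $\delta_0, \dots, \delta_{n_i-1}$ of $\tfrac{g_i''}{g_i'} - \tfrac{f_i''}{f_i'}$. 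These are precisely the quantities controlled by lemma \ref{bialgébriques}, which expresses $(\alpha_{-1}, \dots, \alpha_{n_i-2})$ and $(\delta_0, \dots, \delta_{n_i-1})$ as the images under the fixed algebraic automorphisms $S_{n_i}$ and $D_{n_i}$ of the ratios $\tfrac{a_{n_i+2}}{a_{n_i+1}}, \dots, \tfrac{a_{2n_i+1}}{a_{n_i+1}}$. Since these ratios are the coordinates on $G\backslash R_{y_i,n_i}$ provided by proposition \ref{action_G_R}, they depend only on the branching class; computing the jets in the fixed coordinate $z$ for any two representatives of the same classes therefore yields identical coefficients, and since the jet is intrinsic, I conclude it depends only on $\alpha_i$ and $\alpha_i'$.

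The main obstacle will be the order bookkeeping needed to apply lemma \ref{bialgébriques} verbatim: I must verify that the $(n_i-1)$-jet of each differential is governed exactly by $\alpha_{-1}, \dots, \alpha_{n_i-2}$ (respectively $\delta_0, \dots, \delta_{n_i-1}$) and by no higher coefficient, and that each of these is already determined by the $(2n_i+1)$-jet of the relevant map. A secondary subtlety concerns the affine case: $f''/f'$ is invariant only under postcomposition by $\Aff(\C)$, not by all of $G$, so the coefficients $\delta_j$ are a priori merely $\Aff(\C)$-invariant; this is resolved by the identification $G\backslash R_{y_i,n_i} = \Aff(\C)\backslash \widetilde R_{y_i,n_i}$ of remark \ref{action_Aff_R}, which ensures that the $\Aff(\C)$-invariant data $\delta_0, \dots, \delta_{n_i-1}$ still descend to functions of the branching class.
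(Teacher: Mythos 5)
Your proposal is correct and follows essentially the same route as the paper: point $(i)$ via the Schwarzian cocycle formula in a fixed coordinate $z$ centered at $y_i$ together with the fixed leading coefficients $\tfrac{1-(n_i+1)^2}{2}$ and $n_i$ from proposition \ref{fuchs_locale}, and point $(ii)$ by observing that lemma \ref{bialgébriques} expresses the relevant Laurent coefficients $\alpha_{-1},\dots,\alpha_{n_i-2}$ and $\delta_0,\dots,\delta_{n_i-1}$ as functions of the ratios $a_{n_i+2}/a_{n_i+1},\dots,a_{2n_i+1}/a_{n_i+1}$, which by proposition \ref{action_G_R} (and remark \ref{action_Aff_R} for the affine case) depend only on the branching class. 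Your explicit attention to the order bookkeeping and to the $\Aff(\C)$- versus $G$-invariance of $f''/f'$ is a sound sharpening of details the paper leaves implicit.
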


\begin{proof}
Let $z_i$ be a local coordinate on $U_i$, centered at $y_i$.
One has $\{g_i, f_i\}df_i^{\otimes 2} = \{g_i, z_i\}dz_i^{\otimes 2} - \{f_i, z_i\}dz_i^{\otimes 2}$.
According to proposition \ref{fuchs_locale}, the coefficient of $\frac 1{z_i^2}$ in $\{f_i, z_i\}$ and $\{g_i, z_i\}$ is $\frac{1-(n_i+1)^2}2$.
Thus $\{f_i, z_i\} - \{g_i, z_i\}$ has a pole of order at most $1$ at $z_i = 0$, and thus $\{g_i, f_i\}df_i^{\otimes 2} \in \Gamma\left(U_i, K_X^{\otimes 2}(y_i)\right)$.
Similarly, $[g_i, f_i]df_i = [g_i, z_i]dz_i - [f_i, z_i]dz_i$ and the residue of both $[g_i, z_i]$ and $[f_i, z_i]$ at $y_i$ is $n$.
Thus $[g_i, z_i] - [f_i, z_i]$ is holomorphic on $U_i$, and $[g_i, f_i]df_i \in \Gamma(U_i, K_X)$.
This shows point $(i)$.

Now if $\widetilde f_i$, $\widetilde g_i$ are two functions of branching class $\alpha_i$ at $y_i$, according to lemma \ref{bialgébriques}, the coefficients of $\{f_i, z_i\}$ and $\{\widetilde f_i, z_i\}$ coincide to order $n_i-2$, and the same is true for $g_i$ and $\widetilde g_i$.
In particular, $\{g_i, f_i\}df_i^{\otimes 2} = \{g_i, z_i\}dz_i^{\otimes 2} - \{f_i, z_i\}dz_i^{\otimes 2}$ coincide with $\{\widetilde g_i, \widetilde f_i\}d\widetilde f_i^{\otimes 2} = \{\widetilde g_i, z_i\}dz_i^{\otimes 2} - \{\widetilde f_i, z_i\} dz_i^{\otimes 2}$ up to order $n_i-2$.
This can also be stated as $j^{n_i-1}_{y_i}\{g_i, f_i\}df_i^{\otimes 2} = j^{n_i-1}_{y_i}\{\widetilde g_i, \widetilde f_i\}d\widetilde f_i^{\otimes 2}$, since $\{g_i, f_i\}df_i^{\otimes 2} \in \Gamma\left(U_i, K_X^{\otimes 2}(y_i)\right)$.
The same argument works \emph{mutatis mutandis} to show $j^{n_i-1}_{y_i}[g_i, f_i]df_i = j^{n_i-1}_{y_i}[\widetilde g_i, \widetilde f_i]d\widetilde f_i$.
This proves $(ii)$.
\end{proof}

Moreover, it follows from remark $\ref{surj_diffquadmero}$ that for any meromorphic quadratic differential around $y_i$ with simple pole at $y_i$, $\phi \in K_X^{\otimes 2}\left([y_i]\right)_{(y_i)}$, there exists a branching class $\alpha_i''$ at $y_i$ such that if $h_i$ has branching class $\alpha_i''$, then $j^{n_i-1}_{y_i} \{h_i, f_i\}df_i^{\otimes 2} = j^{n_i-1}\phi$.
Similarly, for any $j^{n_i-1}_{y_i}\psi \in J^{n_i-1}_{y_i}K_X$, there exists a branching class $\alpha_i'''$ at $y_i$ such that if $h_i$ has branching class $\alpha_i'''$, then $j^{n_i-1}_{y_i}[h_i, f_i]df_i = j^{n_i-1}_{y_i}\psi$.
Indeed a differential form with pole of order $1$ at $y_i$ is given by $[h_i, z_i]dz_i$ for some holomorphic function $h_i$ with branching divisor $n \cdot y_i$ if and only if it has residue $n$ at $y_i$.

Finally, note that $\prod_{i=1}^r J^{n_i-1}_{y_i}K_X^{\otimes 2}\left(D^{\red}\right) = H^0\left(D, K_X^{\otimes 2}\left(D^{\red}\right)|_D\right)$ and $\prod_{i=1}^r J^{n_i-1}_{y_i}K_X = \break H^0(D, K_X|_D)$.

We have just proved the following~:

\begin{proposition}\label{struct_affine_gunning}
The schwarzian derivative equips the algebraic variety $A_X^D$ of branching classes on $(X, D)$ with the structure of an affine space, directed by the vector space $H^0(D, K_X^{\otimes 2}(-D^{\red})|_D)$. 

The differential operator $f \mapsto \frac{f''}{f'}$ equips $A_X^D$ with the structure of an affine space, directed by the vector space $H^0(D, K_X|_D)$.
\end{proposition}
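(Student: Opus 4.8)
The plan is to reduce the global statement to a pointwise one at each branching point and then reassemble. Since $A_X^D = \prod_{i=1}^r G\backslash R_{y_i, n_i}$ is a product and an affine structure on a product is the product of the affine structures on its factors, I would treat each factor $G\backslash R_{y_i, n_i}$ separately and pass to the global directing space through the identifications $\prod_i J^{n_i-1}_{y_i} K_X^{\otimes 2}(D^{\red}) = H^0(D, K_X^{\otimes 2}(D^{\red})|_D)$ and $\prod_i J^{n_i-1}_{y_i} K_X = H^0(D, K_X|_D)$ recorded just above. For a fixed reference class $\alpha$, I would define the candidate difference map $\Theta_\alpha$ sending a class $\alpha'$ to the tuple whose $i$-th entry is the $(n_i-1)$-jet at $y_i$ of $\{g_i, f_i\}df_i^{\otimes 2}$ in the Schwarzian case, respectively of $[g_i, f_i]df_i$ in the case of $f\mapsto f''/f'$, where $f_i$ and $g_i$ are local branched functions representing $\alpha_i$ and $\alpha_i'$. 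The preceding lemma guarantees both that these differences extend across $y_i$ with the expected pole order and that their $(n_i-1)$-jets are independent of the chosen representatives, so $\Theta_\alpha$ is well defined with target the directing vector space $V$ of the statement, and $\Theta_\alpha(\alpha) = 0$.

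To promote $\Theta_\alpha$ to an affine structure I would verify that the assignment $\alpha'\mapsto \Theta_\alpha(\alpha')$ provides a simply transitive family of difference vectors. The additivity $\Theta_\alpha(\alpha'') = \Theta_\alpha(\alpha') + \Theta_{\alpha'}(\alpha'')$ follows factorwise from the cocycle identity for the Schwarzian derivative, formula \eqref{der_schw_composee}, written as $\{h_i, f_i\}df_i^{\otimes 2} = \{h_i, g_i\}dg_i^{\otimes 2} + \{g_i, f_i\}df_i^{\otimes 2}$, and from the analogous cocycle $[h_i, f_i]df_i = [h_i, g_i]dg_i + [g_i, f_i]df_i$ for $f\mapsto f''/f'$; taking $(n_i-1)$-jets preserves these additive relations. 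This is exactly the Weyl axiom for the difference $\alpha' - \alpha := \Theta_\alpha(\alpha')$, and it guarantees consistency under change of base point.

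It then remains to check that, for a fixed $\alpha$, the map $\Theta_\alpha$ is a bijection onto $V$. Surjectivity is precisely the content of remark \ref{surj_diffquadmero}, which produces, for any prescribed jet of meromorphic quadratic differential with simple pole at $y_i$ (respectively any prescribed holomorphic jet), a branching class realizing it. Injectivity is where lemma \ref{bialgébriques} is used: by proposition \ref{action_G_R} the class $\alpha_i'$ is the point $\left(a_{n_i+2}/a_{n_i+1}, \dots, a_{2n_i+1}/a_{n_i+1}\right)$, and lemma \ref{bialgébriques} exhibits the automorphisms $S_{n_i}$ and $D_{n_i}$ carrying this tuple bijectively to the low-order coefficients $(\alpha_{-1}, \dots, \alpha_{n_i-2})$ of $\mathcal S(g_i)$, respectively to $(\delta_0, \dots, \delta_{n_i-1})$. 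Since the reference $f_i$ is fixed, knowing the $(n_i-1)$-jet of the difference is equivalent to knowing these coefficients, hence equivalent to knowing $\alpha_i'$. Thus $\Theta_\alpha$ is injective, and combined with surjectivity it is a bijection; declaring $\alpha + v := \Theta_\alpha^{-1}(v)$ defines the asserted affine structure, the cocycle relation of the previous paragraph ensuring independence of the base point.

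The substantive analytic and algebraic input is already packaged into the three cited ingredients — well-definedness of the jets, their surjectivity via Fuchs's local theory, and the invertibility of $S_{n_i}$ and $D_{n_i}$ — so the work here is mainly the bookkeeping that turns them into a genuine simply transitive action. I expect the point deserving the most care to be injectivity: one must confirm that the entire branching class, a point of $\C^{n_i}$, is recovered from the $(n_i-1)$-jet of the difference and not from a shorter truncation, which is exactly why it is the \emph{invertibility} of $S_{n_i}$ and $D_{n_i}$ in lemma \ref{bialgébriques}, and not merely their existence, that is essential.
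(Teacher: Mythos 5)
Your proposal is correct and follows essentially the same route as the paper: the difference of two branching classes is measured factorwise by the $(n_i-1)$-jets of $\{g_i,f_i\}df_i^{\otimes 2}$ (resp.\ $[g_i,f_i]df_i$), with well-definedness from the jet lemma, surjectivity from remark \ref{surj_diffquadmero}, and injectivity from the invertibility of $S_{n_i}$ and $D_{n_i}$ in lemma \ref{bialgébriques}. You merely make explicit the verification of the affine-space axioms (the cocycle identity and simple transitivity) that the paper leaves implicit, which is a faithful and slightly more careful write-up of the same argument.
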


\subsection{Bundles of Branched Projective $2$-Frames as Abstract $\CP^1$-Bundles with Section}\label{subsection_branching_classes_couple_P_s}

Let $D$ be an effective divisor on $X$, as in \eqref{diviseur}.
We have seen that any branching class $\alpha \in A_X^D$ is associated to a $\CP^1$-bundle with a distinguished section, namely the branched projective osculating line $P_X^D(\alpha)$, along with the section $s$ defined in section \ref{gunning_branché}.
In this section, we study the map between $A_X^D$ and the space of isomorphism classes of couples $(P, \sigma)$, where $P$ is a $\CP^1$-bundle on $X$ and $\sigma$ is a section of $P$.
See also \cite{loray-marin} and \cite{mandelbaum_3} for a study of the analytic $\CP^1$-bundle associated to a (branched) projective structure.

The datum of a $\CP^1$-bundle with a section is equivalent to the datum of a $\A^1$-bundle, where $\A^1$ is the complex affine line (with automorphism group the affine group $\Aff(\C)$).
Indeed the section, seen as the section at infinity, provides a reduction of structure group of the $\CP^1$-bundle to the affine group (the group of homographies preserving the point at infinity).
Conversely an $\A^1$-bundle gives rise to a $\CP^1$-bundle along with a section at infinity, by adding a point at infinity to the fibers.
Thus the space of isomorphism classes of $\CP^1$-bundles with section is in fact the space of isomorphism classes of $\A^1$-bundles.

Now an $\A^1$-bundle $A$ on $X$ is an affine bundle directed by a line bundle, namely the line bundle $L$ whose fiber $L_x$ over $x \in X$ is the line of constant vector fields on the fiber $A_x$.
If $A$ is seen as a $\CP^1$-bundle with section $(P, \sigma)$, then $L_x$ is the line of vector fields on the fiber vanishing twice at $\sigma(x)$.
Note that, although there is a canonical action by translation of $L_x$ on $A_x$, multiplication of the vectors of $L_x$ by a nonzero number defines another action.
Thus any automorphism of the line bundle $L$ defines on $A$ another structure of affine bundle directed by $L$.

Now fix $L$ a line bundle on $X$, and let $A$ be an affine bundle directed by $L$.
There is an open cover $(U_i)_{i \in I}$ of $X$ such that $A$ admits a local section $f_i$ on each $U_i$.
Denote by $h_{ij}$ the section of $L$ on $U_i \cap U_j$ defined by $h_{ij} = f_j - f_i$.
The family $(h_{ij})$ is a cocycle that represents a class $c(A) \in H^1(X, L)$.
The class $c(A)$ determines $A$ as an affine bundle directed by $L$.
Now as an $\A^1$-bundle, $A$ has one structure of affine bundle directed by $L$ per automorphism of $L$.
Thus the $\A^1$-bundle $A$ is determined by an orbit of the gauge group $\Aut(L)$ acting on $H^1(X, L)$.
In the case where $X$ is compact, $\Aut(L) = \C^*$ and thus the space of $\A^1$-bundles ($\CP^1$-bundles with section) with underlying line bundle $L$ is $\mathbb P\left(H^1(X, L)\right) \cup \{0\}$.
See \cite{maruyama}, \cite{heu-loray} for a more detailed study of $\CP^1$-bundles on Riemann surfaces.

Let us come back to the case of the branched projective osculating line $P_X^D(\alpha)$ with its section $s$, where $\alpha$ is a branching class on $X$ of divisor $D$.
We suppose moreover that $X$ is compact.
The line bundle whose fiber over $x \in X$ contains the vector fields on $P_X^D(\alpha)_x$ vanishing twice at $s(x)$ is $F_1^{X, D}(\alpha)$, that is canonically identified with $K_X(-D)$ according to proposition \ref{tangent_branché}.
Thus the couple $(P_X^D(\alpha), s)$ is canonically endowed with the structure of an affine bundle directed by $K_X(-D)$.
As a consequence, it is associated to a class $\gamma_{\alpha} \in H^1\left(X, K_X(-D)\right)$, and, as an abstract $A^1$-bundle, to an element $[\gamma_{\alpha}]$ in $\mathbb P\left(H^1\left(X, K_X(-D)\right)\right) \cup \{0\}$ (here the compacity of $X$ matters).
Recall that to compute $\gamma_{\alpha}$, one has to take local sections of $P_X^D(\alpha)$ that do not intersect $s$, consider their differences as local sections of $K_X(-D)$ defining a cocycle, and compute the associated cohomology class.

\begin{lemma}\label{lemme_equivalence_section_affine_structure}
The datum of a local section of $P_X^D(\alpha)$, over an open subset $U \subset X$, that does not intersect $s$ is equivalent to the datum of a branched affine structure on $U$ whose associated branched projective structure has branching class $\alpha$.
\end{lemma}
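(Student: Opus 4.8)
The plan is to exhibit the two maps of the claimed equivalence and check that they are mutually inverse, all the nontrivial points being concentrated at the branch points $y_1,\dots,y_r$. Throughout I use that $s = s_X^D(\alpha)$ is the ``$0$-section'' (coming from the marked point $x_0 = 0$), that a branched affine structure on $U$ means charts $w_i : U_i \to \C \subset \CP^1$ with transitions in $\Aff(\C)$ and branching divisor $D$, and that $\Aff(\C)$ is precisely the stabilizer of $\infty \in \CP^1$.

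For the direction from affine structures to sections, I would start with such charts $w_i$, of branching class $\alpha$, and define $\sigma(x)$ to be the point of the fibre $P_X^D(\alpha)_x$ that the trivialization $\overline{w_i}$ sends to $\infty$. Since the transitions fix $\infty$, this is independent of $i$ on overlaps and defines a global section of $P_X^D(\alpha)|_U$. Because the frames $j^{2(n_k+1)}_{y_k} w_i$ lie in $\alpha_k$, each $w_i$ is a genuine local section of $\mathcal P_X^D(\alpha)$, so $\sigma$ is holomorphic across the $y_k$; and since $s$ is the $0$-section while $\sigma$ is the $\infty$-section, one has $\sigma(x)\neq s(x)$ everywhere (at $y_k$ one reads this in the branched trivialization $\mu_{w_i}$, in which $s(y_k)=w_i(y_k)\in\C$ while $\sigma(y_k)=\infty$).

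For the converse I would recover the structure from a section $\sigma$ avoiding $s$. Over $\mathbb X\cap U$ I work in a local coordinate $z$, in which $s$ is represented by the function $z$ and $\sigma$ by a holomorphic $\tau=\mu_z\circ\sigma$ with $\tau\neq z$. A chart $w=f(z)$ has the property that its osculating Möbius transformation sends $\sigma$ to $\infty$ exactly when $\frac{f''}{f'}=\frac{2}{\tau - z}$, a first order linear equation for $\log f'$; its solutions form a single $\Aff(\C)$-orbit, hence a well-defined affine structure on $\mathbb X\cap U$ whose difference with the coordinate affine structure is $\frac{2\,dz}{\tau - z}$ (a straightforward computation, in the style of the affine subsection, shows this form transforms correctly under change of coordinate). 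By construction this is the unique affine structure with $\infty$-section $\sigma$, which is exactly what is needed for the two maps to be inverse over $\mathbb X$.

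The main work, and the main obstacle, is at the branch points, where the coordinate trivialization $\mu_z$ degenerates: there I would instead trivialize $P_X^D(\alpha)$ near $y_k$ by a branched frame $w_0$ of class $\alpha_k$ and analyse the expansion of $\sigma$. Two points must be checked. First, that the recovered connection $\frac{2\,dz}{\tau - z}$ has a simple pole at $y_k$ of residue exactly $n_k$, so that by the residue criterion of the previous subsection it is realized by a map branched to order $n_k$. Second, that this chart has branching class precisely $\alpha_k$: by lemma \ref{bialgébriques} the class is determined by the $(n_k-1)$-jet of $\frac{w''}{w'}$, which by remark \ref{action_Aff_R} coincides with the projective datum $\alpha_k$. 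The key is that, for \emph{any} section $\sigma$ of this fixed bundle, the first $n_k$ coefficients of $\tau$ near $y_k$ are forced by the construction of the fibre $\mathcal P_X^D(\alpha)_{y_k}$ out of the orbit $\alpha_k$, so that $\frac{2}{\tau - z}$ agrees with the class-$\alpha_k$ connection to order $n_k-1$ independently of $\sigma$, the higher order terms being the affine freedom measured by $H^0\!\left(X, K_X(-D)\right)$ of proposition \ref{structure_affine_branchée}. Existence of the extending branched chart then follows from the local Fuchs-type surjectivity used in remark \ref{surj_diffquadmero}, and this branched affine structure maps back to $\sigma$ under the first construction, completing the equivalence.
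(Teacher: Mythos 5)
Your first direction coincides with the paper's, but your converse takes a genuinely different route. The paper never writes down a differential equation: given $\sigma$ avoiding $s$, it forms the $\Aff(\C)$-principal subbundle $\mathcal K \subset \mathcal P_X^D(\alpha)$ of trivializations sending $\sigma$ to $\infty$, uses the transversality of $\sigma$ and $s$ to get the splittings $\At(\mathcal K) \oplus F_1^{X,D}(\alpha) = \At\left(\mathcal P_X^D(\alpha)\right)$, and deduces that there is a \emph{unique} branched projective connection of class $\alpha$ for which $\sigma$ is flat (its restriction to $\At(\mathcal K) \cap \At\left(\mathcal S_X^D(\alpha)\right)$ being forced by $\Phi_X^D(\alpha)$ and its restriction to $F_1^{X,D}(\alpha)$ being the identity); the affine structure is then extracted from the associated branched projective structure. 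You instead bypass the connection entirely and solve the first-order equation $\frac{f''}{f'} = \frac{2}{\tau - z}$, which is correct: by formula \eqref{osculation} the preimage of $\infty$ under the Möbius transformation osculating $f$ at $t$ is $t + \frac{2f'(t)}{f''(t)}$, and your key claim at the branch points checks out, since writing $\rho = \mu_{w_0} \circ \sigma$ in a branched trivialization of class $\alpha_k$ one finds $\frac{2}{\tau - t} = \frac{w_0''}{w_0'} + \frac{2w_0'}{\rho - w_0}$, whose second term vanishes to order $n_k$ because $\rho$ avoids $w_0$ and $w_0'$ vanishes to order $n_k$; this pins down the residue and the $(n_k-1)$-jet independently of $\sigma$, and lemma \ref{bialgébriques} with remark \ref{action_Aff_R} then gives the branching class. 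Your approach is more elementary (first-order integration rather than the Fuchs theory underlying the projective-structure equivalence) and more explicit, at the cost of a coordinate computation at each branch point; the paper's argument is coordinate-free and delivers the uniqueness of the compatible connection as a byproduct, which is what it actually reuses afterwards. Two citations are slightly off but harmless: the existence of the branched chart solving $\frac{w''}{w'} = \eta$ needs only the residue-$n_k$ criterion stated before proposition \ref{struct_affine_gunning} (elementary integration), not the Schwarzian surjectivity of remark \ref{surj_diffquadmero}; and the affine freedom is measured by sections of $K_X(-D)$, not by proposition \ref{structure_affine_branchée}, which concerns $K_X^{\otimes 2}(-D)$.
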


\begin{proof}
First, suppose given such an affine structure.
Since the branching class of the associated projective structure is $\alpha$, it is given by local trivializations of $P_X^D(\alpha)|_U$, whose changes of trivializations preserve $\infty \in \CP^1$.
Thus it defines a section $\sigma$ of $P_X^D(\alpha)|_U$, defined by the preimage of $\infty$ by the trivializations.
The images of the section $s$ in those trivializations are the local functions from $U$ to $\CP^1$ that are the charts of the affine structure.
Since those charts take values in $\C$, the images of $s$ never take the value $\infty$ and thus $\sigma$ does not intersect $s$.

Conversely, suppose given a section $\sigma$ of $P_X^D(\alpha)$ that does not intersect $s$.
We will show that there exists a unique connection on $P_X^D(\alpha)$ that is a branched projective connection of branching class $\alpha$ and such that $\sigma$ is a flat section.
This will prove the proposition~: among the charts of the branched projective structure associated to this branched projection, consider those whose associated trivilizations of $P_X^D(\alpha)$ send $\sigma$ to $\infty$.
The changes of charts preserve $\infty \in \CP^1$, and thus belong to $\Aff(\C)$.
Moreover, since $\sigma$ does not intersect $s$, the chosen charts take values in $\C$ and thus define an affine structure.

Let us go back to the definition of a branched projective connection on the $G$-principal bundle $\mathcal P_X^D(\alpha)$, given in subection \ref{conn_proj_branchées}.
For any $x \in X$, the fiber $\mathcal P_X^D(\alpha)_x$ can be seen as the set of all isomorphisms $P_X^D(\alpha)_x \xrightarrow{\sim} \CP^1$.
Let us denote by $\mathcal K$ the $\Aff(\C)$-principal subbundle of $\mathcal P_X^D(\alpha)$, whose fiber over $x \in X$ is the set of trivializations $\Psi : P_X^D(\alpha)_x \xrightarrow{\sim} \CP^1$ such that $\Psi(\sigma(x)) = \infty$.
Since $\sigma$ and $s$ do not intersect, it is easy to figure out that $\At(\mathcal K) \oplus F_1^{X, D}(\alpha) = \At\left(\mathcal P_X^D(\alpha)\right)$, $\ad(\mathcal K) \oplus F_1^{X, D}(\alpha) = \ad\left(\mathcal P_X^D(\alpha)\right)$ and $\left(\ad(\mathcal K) \cap \ad\left(\mathcal S_X^D(\alpha)\right)\right) \oplus F_1^{X, D}(\alpha) = \ad\left(\mathcal S_X^D(\alpha)\right)$.
Moreover, the section $\sigma$ is flat for a branched Cartan connection $\omega$ on $\mathcal P_X^D(\alpha)$ if and only if $\omega(\At(\mathcal K)) \subset \ad(\mathcal K)$.
Therefore if $\omega $ is moreover a branched projective connection, then $\omega|_{\At(\mathcal K) \cap \At\left(\mathcal S_X^D(\alpha)\right)}$ is fully determined by the map $\Phi_X^D(\alpha)$ in diagram \eqref{diag_connexion_proj_branchée}.
Since $\omega|_{F_1^{X, D}(\alpha)}$ is also determined (it has to be the identity), $\omega|_{\At\left(\mathcal S_X^D(\alpha)\right)}$, and thus $\omega$, is determined by the datum of $\mathcal K$.
\end{proof}

Let $\sigma_1$, $\sigma_2$ be two local sections of $P_X^D(\alpha)$, over an open subset $U \subset X$, that do not intersect $s$.
Denote by $a_1$ and $a_2$ the associated branched affine structures.
Since $P_X^D(\alpha) \backslash s(X)$ is an affine bundle directed by $K_X(-D)$, the difference $\sigma_2 - \sigma_1$ is a local section of $K_X(-D)$.
Moreover the difference $a_2-a_1$ is a local section of $K_X$ (see \cite{mandelbaum_1}, that extends remarks of section \ref{subsection_affine_structures}).

\begin{lemma}\label{lemme_diff_section_structure}
One has $\sigma_2-\sigma_1 = -(a_2-a_1)$.
\end{lemma}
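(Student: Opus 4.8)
I need to prove that for two local sections $\sigma_1, \sigma_2$ of $P_X^D(\alpha)$ not meeting $s$, with associated branched affine structures $a_1, a_2$, one has $\sigma_2 - \sigma_1 = -(a_2 - a_1)$. Both sides are local sections of $K_X(-D)$ (the left side because $P_X^D(\alpha)\setminus s(X)$ is affine directed by $K_X(-D)$; the right side because $a_2 - a_1 \in \Gamma(U, K_X)$ but restricted to the branched setting it lands in $K_X(-D)$ via the identifications). The cleanest strategy is to compute everything in a single, well-chosen local trivialization and reduce the identity to a direct calculation.

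**The plan.** Let me work over a small open set $U$ with a local coordinate and, crucially, fix a single branched chart $f : U \to \CP^1$ whose branching class is $\alpha$. This $f$ defines a trivialization $(\Pi, \mu) : P_X^D(\alpha)|_U \simeq U \times \CP^1$ in which $\mu \circ s = f$, exactly as in section \ref{gunning_branché}. In this trivialization, each section $\sigma_k$ not meeting $s$ is given by a holomorphic map $U \to \CP^1$ avoiding the graph of $f$. By Lemma \ref{lemme_equivalence_section_affine_structure}, $\sigma_k$ corresponds to the affine structure $a_k$ whose charts are precisely those trivializations of $P_X^D(\alpha)$ sending $\sigma_k$ to $\infty$; composing such a chart with $\mu$ expresses the chart of $a_k$ as a Möbius transformation applied to $f$, sending $\sigma_k$ to $\infty$. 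First I would write down, for each $k$, the explicit affine chart $w_k$ of $a_k$ in terms of $f$ and $\sigma_k$: normalizing so that $\sigma_k \mapsto \infty$ forces $w_k = \frac{1}{f - \sigma_k}$ up to an affine factor, which I would pin down using the reduction-to-$\Aff(\C)$ discussion.

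**The core computation.** The difference of affine structures $a_2 - a_1$ is computed by the operator $f \mapsto f''/f'$ from section \ref{subsection_affine_structures}: in the common coordinate it is $[w_2, w_1]\,dw_1 = \left([w_2, z] - [w_1, z]\right)dz$ where $z$ is the local coordinate, using the cocycle relation for $[\,\cdot\,,\,\cdot\,]$. Meanwhile the difference $\sigma_2 - \sigma_1$, as a section of $K_X(-D)$, is computed via the affine structure of the fiber $P_X^D(\alpha)\setminus s(X)$ directed by $F_1^{X,D}(\alpha) \simeq K_X(-D)$: the vector field on the fiber vanishing twice at $s(x)$ gives the identification of the affine-difference with a quadratic vector field, which under the identification $F_1^{X,D}(\alpha) \simeq K_X(-D)$ of proposition \ref{tangent_branché} (and the mechanism in the proof of proposition \ref{lien_derivee_schwarzienne}, where $dz_1$ is identified with $\tfrac12(z_1 - z_1(x))^2 \partial/\partial z_1$) becomes a genuine differential. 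The whole proof then reduces to checking that these two prescriptions agree up to sign. I expect this to come down to the elementary identity that if $w_k = 1/(f - \sigma_k)$ then $w_k''/w_k'$ and the fiberwise affine difference $\sigma_2 - \sigma_1$ are related by exactly a sign, which is a short explicit computation with the fractional-linear form of the charts.

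**Main obstacle.** The delicate point is matching the two identifications of $K_X(-D)$: one coming from the reduction of the affine structures (via $f \mapsto f''/f'$ and the identification $F_1^{X,D}(\alpha) \simeq K_X(-D)$), the other from viewing $P_X^D(\alpha)\setminus s(X)$ as an affine bundle directed by $F_1^{X,D}(\alpha)$ and reading off $\sigma_2 - \sigma_1$. Both pass through proposition \ref{tangent_branché}, but I must verify they use \emph{the same} canonical isomorphism $F_1^{X,D}(\alpha) \simeq K_X(-D)$, so that the sign is genuinely $-1$ and not an uncontrolled scalar. I would handle this by tracing, in the fixed trivialization $\mu$, the normalization convention that makes the section $s$ correspond to $f$ and the ``infinity'' section $\sigma_k$ correspond to the affine charts; once both differences are expressed as the differential $\left([w_2,z]-[w_1,z]\right)dz$ against the \emph{same} basis vector of $F_1^{X,D}(\alpha)$, the sign emerges from the single computation $\frac{d}{dt}\frac{1}{(f - t)}\big|$ giving the inversion, and the claimed identity $\sigma_2 - \sigma_1 = -(a_2 - a_1)$ follows.
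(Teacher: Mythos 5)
Your overall strategy (fix one trivialization of $P_X^D(\alpha)$, express both $\sigma_2-\sigma_1$ and $a_2-a_1$ in it, and compare) is the same as the paper's, but the step where you identify the affine charts is wrong, and it is exactly the step that carries the content of the lemma. You claim that normalizing $\sigma_k\mapsto\infty$ forces the chart of $a_k$ to be $w_k=\frac{1}{f-\sigma_k}$ up to an affine factor. The trivializations of $P_X^D(\alpha)_x$ sending $\sigma_k(x)$ to $\infty$ form a torsor under $\Aff(\C)$ \emph{at each point} $x$, so the ambiguity is an $x$-dependent affine factor $z\mapsto a(x)z+b(x)$, not a constant one; the functions $a,b$ are pinned down only by requiring the resulting section of $\mathcal P_X^D(\alpha)$ to be flat for the branched projective connection $\omega_k$ determined by $\sigma_k$ (lemma \ref{lemme_equivalence_section_affine_structure}), a condition your sketch never imposes. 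Without it, $x\mapsto \frac{1}{f(x)-\sigma_k(x)}$ is generally not a chart of $a_k$ — it need not even be a local biholomorphism. Concretely, take $X=\C$, $D=0$, $f=z$, so $P_X=\C\times\CP^1$ with $s(x)=x$, and $\sigma_2(x)=x+1$: your formula gives $w_2\equiv -1$, a constant, whereas the actual affine structure $a_2$ is the nontrivial one whose charts solve $\frac{w''}{w'}= \frac{-2}{\lambda(w)-w}$ coming from the flatness of $\sigma_2$ for $\omega_2$.

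The paper circumvents this by never writing an explicit chart of $a_2$: it trivializes by a chart $w$ of $a_1$ (so that $\sigma_1\mapsto\infty$ and $s\mapsto w$), reads off $\sigma_2-\sigma_1$ directly as the fiberwise translation $\frac{-2}{\lambda(w)-w}\,dw$, and then computes $a_2-a_1=\nabla_2-\nabla_1$ through the linear connections induced on $T_X$, where $\nabla_2$ is extracted from $\omega_2-\omega_1$ (itself determined by the flatness of $\sigma_2$) via a Lie-bracket computation. To repair your argument you would have to either reproduce that connection computation or derive the $1$-jet of the osculating family of Möbius transformations for $w_k$ in the spirit of lemma \ref{derivee_schwarzienne}; the shortcut as stated does not close the gap.
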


\begin{proof}
By analytic continuation, it is enough to prove this away from the branched points.
Thus we suppose that $U$ does not contain any point of $D$.
Up to shrinking $U$, suppose given a chart $w : U \to \C$ of the affine stucture $a_1$ on $U$.
The chart $w$ provides a trivialization $(w, z) : P_X^D(\alpha) \xrightarrow{\sim} U \times \CP^1$, such that the section $\sigma_1$ is given by $z \circ \sigma_1 = \infty$, and the section $s$ by $z \circ s = w$.
Let $\lambda : w(U) \to \CP^1$ be the holomorphic section such that $z \circ \sigma_2 = \lambda(w)$.
If $\zeta = 1/(z-w)$, for any $x \in U$, the difference $\sigma_2(x) - \sigma_1(x)$ is given by the vector field on $P_X^D(\alpha)_x$ whose flow at time $1$ sends $\sigma_1(x)$ to $\sigma_2(x)$, namely $\frac{1}{\lambda(w)-w} \partial_{\zeta} = \frac{-2}{\lambda(w)-w} \frac{(z-w)^2}2 \partial_z$.
Since the identification $F_1^{X, D}(\alpha) \simeq K_X(-D)$ of proposition \ref{tangent_branché} identifies $dw$ with  $\frac{(z-w)^2}2 \partial_z$, we have on $U$ : $\sigma_2-\sigma_1 = \frac{-2}{\lambda(w)-w} dw$.

On the other hand, denote by $\omega_1$ and $\omega_2$ the local sections of $K_X \otimes \At(\mathcal P_X^D(\alpha)$ that define the projective connections associated to $a_1$ and $a_2$.
At any $x \in U$ the difference $(\omega_2 - \omega_1)_x(\partial w)$, seen as a vector field of $P_X^D(\alpha)_x$, has to vanish twice at $w(x)$ and has to take the value $\lambda'(w(x))\partial_z$ at $\sigma_2(x)$, because $\sigma_2$ is flat for the connection $\omega_2$.
Thus on the open set $U$, $\omega_2 - \omega_1 = \frac{\lambda'(w)}{(\lambda(w)-w)^2} (z-w)^2 \partial_z dw$.

Now let $\nabla_1$ and $\nabla_2$ be the local morphisms $T_X \to K_X \otimes T_X$ that define the linear connections on $T_X$ induced by the affine structures $a_1$ and $a_2$. 
For $i = 1, 2$, since the projective connection $\omega_i$ preserves the section $\sigma_i$, it preserves the line subbundle $L_i$ of $\ad(\mathcal P_X^D(\alpha))$ whose fiber over $x \in X$ is spanned by $\partial_z$ for $i=1$ and $(z-\lambda(w(x)))^2 \partial_z$ for $i=2$.
Moreover, since $T_s(x)P_X^D(\alpha)$ is canonically identified to $T_{X, x}$ for any $x \in U$, $L_i$ is canonically identified to $T_X|_U$ by $V \in L_{i, x} \mapsto V(s(x)) \in T_s(x)P_X^D(\alpha)$.
The connection $\nabla_i$ is thus given by the restriction to $L_i$ of the parallel transport defined by $\omega_i$. 
Now $\partial_w$ is identified to $\partial_z$ in $L_1$, and is thus a flat section for $\nabla_1$. In $L_2$, $\partial_w$ is identified to $\frac 1 {(w-\lambda(w))^2}(z-\lambda(w))^2\partial_z$.
Thus one has, using this latter identification,
$$
\begin{aligned}
&(\nabla_2 - \nabla_1)(\partial_w)(\partial_w) = \nabla_2(\partial_w)(\partial_w)\\
&= \frac d {dw} \left( \frac 1 {(w-\lambda(w))^2}(z-\lambda(w))^2\partial_z \right) + \left[\frac{\lambda'(w)}{(\lambda(w)-w)^2} (z-w)^2 \partial_z, \frac 1 {(w-\lambda(w))^2}(z-\lambda(w))^2\partial_z \right]\\
&= \left( \frac{-2(1-\lambda'(w))}{(w-\lambda(w))^3}(z-\lambda(w))^2 + \frac{-2\lambda'(w)(z-\lambda(w))}{(w-\lambda(w))^2} - \frac {2\lambda'(w)}{(w-\lambda(w))^3}(z - \lambda(w))(z-w) \right) \partial_z\\
&=\frac{-2}{(w-\lambda(w))^3}(z-\lambda(w))^2\partial_z\\
&=\frac{-2}{w-\lambda(w)}\partial_w
\end{aligned}
$$

Finally, $a_2-a_1 = \nabla_2-\nabla_1 = \frac{2}{\lambda(w)-w} dw = -(\sigma_2-\sigma_1)$

\end{proof}

Fix a branching class $\alpha_0 \in A_X^D$, and let $(\sigma_i^0)_{i \in I}$ be a family of sections of $P_X^D(\alpha_0)$ adapted to an open covering of $X$ $(U_i)_{i \in I}$ and that do not intersect $s$.
We have seen that the class $\gamma_{\alpha_0}$ of the cocycle $(\sigma_j^0-\sigma_i^0)_{i, j \in I}$ in $H^1(X, K_X(-D))$ characterizes the $\CP^1$-bundle $P_X^D(\alpha_0)$, along with its canonical section. Now let $\alpha_1 \in A_X^D$ be another branching class, and $(\sigma_i^1)_{i \in I}$ a family of local sections of $P_X^D(\alpha_1)$ that does not intersect the canonical section, so that $\left(\sigma_j^1 - \sigma_i^1\right)_{i, j \in I}$ represents the class $\gamma_{\alpha_1} \in H^1\left(X, K_X(-D)\right)$.
Now the difference $\gamma_{\alpha_1} - \gamma_{\alpha_0}$ is represented by the cocycle $\left(\left(\sigma_j^1- \sigma_j^0\right) - \left(\sigma_i^1 - \sigma_i^0\right)\right)_{i, j \in I}$.
Thus lemma \ref{lemme_diff_section_structure} implies that $-(\gamma_{\alpha_1} - \gamma_{\alpha_0})$ is represented by the cocycle $\left(\left(a_j^1- a_j^0\right) - \left(a_i^1 - a_i^0\right)\right)_{i, j \in I}$, where $a_i^k$ is the affine structure induced by $\sigma_i^k$ ($k = 1, 2$).

But by proposition \ref{struct_affine_gunning}, the difference $\alpha_1-\alpha_0$ corresponds to an element of $H^0(X, K_X|_D)$, and the correspondance is given by the differential operator $f \mapsto \frac{f''}{f'}$, applied to the charts of $\alpha_1$ written in the charts of $\alpha_0$.
It is equivalent to say that $\alpha_1-\alpha_0$ is given by the differences $a_i^1-a_i^0$ restricted to the points of $D$.
As a consequence, $\gamma_{\alpha_1} - \gamma_{\alpha_0}$ is given by $-\delta(\alpha_1-\alpha_0)$, where $\delta : H^0(X, K_X|_D) \to H^1(X, K_X(-D))$ is the morphism in cohomology given by the short exact sequence $0 \to K_X(-D) \to K_X \to K_X|_D \to 0$. We have proven~:

\begin{theorem}
Let us endow $A_X^D$ with its structure of affine space directed by the vector space $H^0(X, K_X|_D)$.
Let $\gamma : A_X^D \to H^1(X, K_X(-D))$ be the map that maps a branching class $\alpha$ to the isomorphism class of $P_X^D(\alpha)$ along with its canonical section, in the space of affine bundles directed by $K_X(-D)$. 
Then $\gamma$ is a morphism of affine spaces, directed by the linear map $-\delta : H^0(X, K_X|_D) \to H^1(X, K_X(-D))$, where $\delta$ is the map induced in cohomology by the short exact sequence $0 \to K_X(-D) \to K_X \to K_X|_D \to 0$.
\end{theorem}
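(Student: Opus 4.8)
The plan is to reduce the statement to a single identity between the two affine spaces and then recognize the right--hand side as the \v{C}ech description of the connecting homomorphism $\delta$. Since $H^1(X, K_X(-D))$ is a vector space, hence an affine space over itself, to show that $\gamma$ is affine with linear part $-\delta$ it suffices to fix two branching classes $\alpha_0, \alpha_1 \in A_X^D$ and establish
\[
\gamma_{\alpha_1} - \gamma_{\alpha_0} = -\delta(\alpha_1 - \alpha_0),
\]
where $\alpha_1 - \alpha_0 \in H^0(X, K_X|_D)$ is the difference computed in the affine structure of proposition \ref{struct_affine_gunning}. This is exactly the computation carried out just before the statement, so the proof amounts to organizing that discussion into the \v{C}ech formalism.

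First I would choose an open cover $(U_i)_{i \in I}$ of $X$ together with local sections $\sigma_i^0$ of $P_X^D(\alpha_0)$ and $\sigma_i^1$ of $P_X^D(\alpha_1)$, none of which meets the canonical section $s$. By the identification $F_1^{X,D}(\alpha) \simeq K_X(-D)$ of proposition \ref{tangent_branché}, the cocycles $(\sigma_j^k - \sigma_i^k)_{i,j}$ are valued in $K_X(-D)$ and represent $\gamma_{\alpha_k}$ for $k = 0, 1$. Hence $\gamma_{\alpha_1} - \gamma_{\alpha_0}$ is represented by the cocycle $\big((\sigma_j^1 - \sigma_j^0) - (\sigma_i^1 - \sigma_i^0)\big)_{i,j}$.

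Next I would analyse the $0$--cochain $\tau_i := \sigma_i^1 - \sigma_i^0$. Over $U_i \cap \mathbb X$ the bundles $P_X^D(\alpha_0)$ and $P_X^D(\alpha_1)$ both coincide with $P_X$, so this difference is a well--defined section of $K_X$ there; by lemma \ref{lemme_equivalence_section_affine_structure} each $\sigma_i^k$ corresponds to a branched affine structure $a_i^k$ on $U_i$ of branching class $\alpha_k$, and lemma \ref{lemme_diff_section_structure} gives $\tau_i = -(a_i^1 - a_i^0)$ on $U_i \cap \mathbb X$. The crucial point is that, although $a_i^0$ and $a_i^1$ have different branching classes, their difference $a_i^1 - a_i^0$ extends to a genuine holomorphic section of $K_X$ over all of $U_i$ (this is precisely the affine--space structure of proposition \ref{struct_affine_gunning}); thus $\tau_i$ extends across $D$ to an element of $\Gamma(U_i, K_X)$, which is in general \emph{not} a section of $K_X(-D)$.

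Finally I would identify the outcome with the connecting map. By proposition \ref{struct_affine_gunning}, the image of $a_i^1 - a_i^0$ under the restriction $K_X \to K_X|_D$ is exactly $(\alpha_1 - \alpha_0)|_{U_i}$, so $(a_i^1 - a_i^0)_i$ is a family of local lifts of the global section $\alpha_1 - \alpha_0 \in H^0(X, K_X|_D)$ along the surjection $K_X \twoheadrightarrow K_X|_D$. By the \v{C}ech definition of the connecting homomorphism of $0 \to K_X(-D) \to K_X \to K_X|_D \to 0$, the class $\delta(\alpha_1 - \alpha_0)$ is represented by $\big((a_j^1 - a_j^0) - (a_i^1 - a_i^0)\big)_{i,j}$, which equals $-(\tau_j - \tau_i)_{i,j}$, the negative of the cocycle representing $\gamma_{\alpha_1} - \gamma_{\alpha_0}$; this gives the desired identity. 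The main obstacle is bookkeeping: one must track over which open set each difference is defined (the two $\CP^1$--bundles agree only over $\mathbb X$) and verify that the local lifts $a_i^1 - a_i^0$ extend holomorphically across the branch points, so that the computation genuinely reproduces the \v{C}ech formula for $\delta$ rather than a coboundary.
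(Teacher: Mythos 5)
Your proposal is correct and follows essentially the same route as the paper: represent $\gamma_{\alpha_k}$ by the \v{C}ech cocycle of differences of local sections avoiding $s$, rewrite $\gamma_{\alpha_1}-\gamma_{\alpha_0}$ via the $0$-cochain $\sigma_i^1-\sigma_i^0 = -(a_i^1-a_i^0)$ using lemmas \ref{lemme_equivalence_section_affine_structure} and \ref{lemme_diff_section_structure}, and recognize $(a_i^1-a_i^0)_i$ as a family of local lifts of $\alpha_1-\alpha_0$ along $K_X \twoheadrightarrow K_X|_D$, so that the coboundary computes $\delta(\alpha_1-\alpha_0)$. Your explicit attention to the extension of $a_i^1-a_i^0$ across $D$ as a section of $K_X$ (which rests on the lemma preceding proposition \ref{struct_affine_gunning} rather than on that proposition itself) makes the \v{C}ech identification of $\delta$ cleaner than the paper's more compressed phrasing, but the argument is the same.
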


\begingroup
\sloppy
\printbibliography
\endgroup

\end{document}